\let\origsection=\section \def\section{\@ifstar{\origsection*}{\mysection}} 
\def\mysection{\@startsection{section}{1}\z@{.7\linespacing\@plus\linespacing}{.5\linespacing}{\normalfont\scshape\centering\S}}
\renewcommand{\PrintDOI}[1]{\doi{#1}}
\numberwithin{equation}{section}
\def\rmlabel{\upshape({\itshape \roman*\,})}
\def\alabel{\upshape({\itshape \alph*\,})}
\let\polishlcross=\l
\def\l{\ifmmode\ell\else\polishlcross\fi}
\def\qand{\quad\text{and}\quad}
\def\qqand{\qquad\text{and}\qquad}
\let\emptyset=\varnothing
\let\setminus=\smallsetminus
\def\moverlay{\mathpalette\mov@rlay}
\def\mov@rlay#1#2{\leavevmode\vtop{   \baselineskip\z@skip \lineskiplimit-\maxdimen
   \ialign{\hfil$\m@th#1##$\hfil\cr#2\crcr}}}
\newcommand{\charfusion}[3][\mathord]{
    #1{\ifx#1\mathop\vphantom{#2}\fi
        \mathpalette\mov@rlay{#2\cr#3}
      }
    \ifx#1\mathop\expandafter\displaylimits\fi}
\newcommand{\dcup}{\charfusion[\mathbin]{\cup}{\cdot}}
\newcommand{\bigdcup}{\charfusion[\mathop]{\bigcup}{\cdot}}
\DeclareFontFamily{U}  {MnSymbolC}{}
\DeclareSymbolFont{MnSyC}         {U}  {MnSymbolC}{m}{n}
\DeclareFontShape{U}{MnSymbolC}{m}{n}{
    <-6>  MnSymbolC5
   <6-7>  MnSymbolC6
   <7-8>  MnSymbolC7
   <8-9>  MnSymbolC8
   <9-10> MnSymbolC9
  <10-12> MnSymbolC10
  <12->   MnSymbolC12}{}
\DeclareMathSymbol{\powerset}{\mathord}{MnSyC}{180}
\newlength{\myscaledsize}
\newcommand{\vvv}{	\setlength{\myscaledsize}{\the\fontdimen6\font}
	\hspace*{-0.08em}\resizebox{0.5\myscaledsize}{0.4\myscaledsize}{
		\tikz{
			\draw[black,fill=black] (0,0) circle (.2);
			\draw[black,fill=black] (1,0) circle (.2);
			\draw[black,fill=black] (0.5,0.86) circle (.2);}}
}
\newcommand{\VVV}{	\setlength{\myscaledsize}{\the\fontdimen6\font}
	\hspace*{-0.0444em}\raisebox{-0.1\myscaledsize}{\resizebox{0.75\myscaledsize}{0.6\myscaledsize}{
		\tikz{
			\draw[black,fill=black] (0,0) circle (.2);
			\draw[black,fill=black] (1,0) circle (.2);
			\draw[black,fill=black] (0.5,0.86) circle (.2);}}}
			\hspace{0.1em}
}
\newcommand{\pivvv}{\pi_{\vvv}}
\newcommand{\epair}{	\setlength{\myscaledsize}{\the\fontdimen6\font}
	\hspace*{-0.08em}\resizebox{0.5\myscaledsize}{0.4\myscaledsize}{
		\tikz{
			\draw[black,fill=black] (0,0) circle (.2);
			\draw[black,fill=black] (1,0) circle (.2);
						\draw[black] (0.5,0.86) circle (.2);
			\draw[black,line width=4pt ](0,0)--(1,0);}}
}
\newcommand{\EPAIR}{	\setlength{\myscaledsize}{\the\fontdimen6\font}
	\hspace*{-0.0444em}\raisebox{-0.1\myscaledsize}{\resizebox{0.75\myscaledsize}{0.6\myscaledsize}{
		\tikz{
			\draw[black,fill=black] (0,0) circle (.2);
			\draw[black,fill=black] (1,0) circle (.2);
						\draw[black] (0.5,0.86) circle (.2);
			\draw[black,line width=4pt ](0,0)--(1,0);}}}
			\hspace{0.1em}
}
\newcommand{\ev}{	\setlength{\myscaledsize}{\the\fontdimen6\font}
	\hspace*{-0.08em}\resizebox{0.5\myscaledsize}{0.4\myscaledsize}{
		\tikz{
			\draw[black,fill=black] (0,0) circle (.2);
			\draw[black,fill=black] (1,0) circle (.2);
			\draw[black,fill=black] (0.5,0.86) circle (.2);
			\draw[black,line width=4pt ](0,0)--(1,0);}}
}
\newcommand{\EV}{	\setlength{\myscaledsize}{\the\fontdimen6\font}
	\hspace*{-0.0444em}\raisebox{-0.1\myscaledsize}{\resizebox{0.75\myscaledsize}{0.6\myscaledsize}{
		\tikz{
			\draw[black,fill=black] (0,0) circle (.2);
			\draw[black,fill=black] (1,0) circle (.2);
			\draw[black,fill=black] (0.5,0.86) circle (.2);
			\draw[black,line width=4pt ](0,0)--(1,0);}}}
			\hspace{0.1em}
}
\newcommand{\piev}{\pi_{\ev}}
\newcommand{\ee}{	\setlength{\myscaledsize}{\the\fontdimen6\font}
	\hspace*{-0.08em}\resizebox{0.5\myscaledsize}{0.4\myscaledsize}{
		\tikz{
			\draw[black,fill=black] (0,0) circle (.2);
			\draw[black,fill=black] (1,0) circle (.2);
			\draw[black,fill=black] (0.5,0.86) circle (.2);
			\draw[black,line width=4pt ](0,0)--(0.5,0.86);
			\draw[black,line width=4pt ](1,0)--(0.5,0.86);}}
}
\newcommand{\EE}{	\setlength{\myscaledsize}{\the\fontdimen6\font}
	\hspace*{-0.0444em}\raisebox{-0.1\myscaledsize}{\resizebox{0.75\myscaledsize}{0.6\myscaledsize}{
		\tikz{
			\draw[black,fill=black] (0,0) circle (.2);
			\draw[black,fill=black] (1,0) circle (.2);
			\draw[black,fill=black] (0.5,0.86) circle (.2);
			\draw[black,line width=4pt ](0,0)--(0.5,0.86);
			\draw[black,line width=4pt ](1,0)--(0.5,0.86);}}}
			\hspace{0.1em}
}
\newcommand{\piee}{\pi_{\ee}}
\let\pill=\piee
\newcommand{\eee}{	\setlength{\myscaledsize}{\the\fontdimen6\font}
	\hspace*{-0.08em}\resizebox{0.5\myscaledsize}{0.4\myscaledsize}{
		\tikz{
			\draw[black,fill=black] (0,0) circle (.2);
			\draw[black,fill=black] (1,0) circle (.2);
			\draw[black,fill=black] (0.5,0.86) circle (.2);
			\draw[black,line width=4pt ](0,0)--(1,0);
			\draw[black,line width=4pt ](0,0)--(0.5,0.86);
			\draw[black,line width=4pt ](1,0)--(0.5,0.86);}}
}
\newcommand{\EEE}{	\setlength{\myscaledsize}{\the\fontdimen6\font}
	\hspace*{-0.0444em}\raisebox{-0.1\myscaledsize}{\resizebox{0.75\myscaledsize}{0.6\myscaledsize}{
		\tikz{
			\draw[black,fill=black] (0,0) circle (.2);
			\draw[black,fill=black] (1,0) circle (.2);
			\draw[black,fill=black] (0.5,0.86) circle (.2);
			\draw[black,line width=4pt ](0,0)--(1,0);
			\draw[black,line width=4pt ](0,0)--(0.5,0.86);
			\draw[black,line width=4pt ](1,0)--(0.5,0.86);}}}
			\hspace{0.1em}
}
\theoremstyle{plain}
\newtheorem{thm}{Theorem}[section]
\newtheorem{fact}[thm]{Fact}
\newtheorem{prop}[thm]{Proposition}
\newtheorem{lemma}[thm]{Lemma}
\theoremstyle{definition}
\newtheorem{dfn}[thm]{Definition}
\newtheorem{exmp}[thm]{Example}
\newtheorem{prob}[thm]{Problem}
\newtheorem{quest}[thm]{Question}
\newcommand{\seq}[1]{\accentset{\rightharpoonup}{#1}}  
\def\snake{\normalfont\scshape\centering\S}
\let\eps=\varepsilon
\let\theta=\vartheta
\let\rho=\varrho
\let\phi=\varphi
\def\NN{\mathds N}
\def\RR{\mathds R}
\def\PP{\mathds P}
\def\cA{{\mathcal A}}
\def\cM{{\mathcal M}}
\def\cB{{\mathcal B}}
\def\cP{{\mathcal P}}
\def\cK{{\mathcal K}}
\def\cW{{\mathcal W}}
\def\cX{{\mathcal X}}
\def\cY{{\mathcal Y}}
\def\ccE{\mathscr{E}}
\def\ccT{\mathscr{T}}
\def\ccS{\mathscr{S}}
\def\ccG{\mathscr{G}}
\def\bt{\bm{t}}
\def\bd{\bm{d}}
\def\bP{\bm{P}}
\def\bcP{\bm{\cP}}
\def\hP{\smash{\hat{P}}}
\def\bhP{\smash{\bm{{\hat{P}}}}}
\def\hcP{\smash{{\hat{\cP}}}}
\DeclareMathOperator{\ex}{ex}
\let\polishlcross=\l
\def\l{\ifmmode\ell\else\polishlcross\fi}
\def\bl{\bigl(}
\def\br{\bigr)}
\def\eps{\varepsilon}
\def\lra{\longrightarrow}
\def\bl{\bigr(}
\def\br{\bigr)}
\newtheoremstyle{note}  {4pt}  {4pt}  {\sl}  {}  {\itshape}  {.}  {.5em}          {}
\theoremstyle{note}
\begin{document}
\title[The three edge theorem]
{On a generalisation of Mantel's theorem to uniformly dense hypergraphs}

\author[Christian Reiher]{Christian Reiher}
\address{Fachbereich Mathematik, Universit\"at Hamburg, Hamburg, Germany}
\email{Christian.Reiher@uni-hamburg.de}

\author[Vojt\v{e}ch R\"{o}dl]{Vojt\v{e}ch R\"{o}dl}
\address{Department of Mathematics and Computer Science, 
Emory University, Atlanta, USA}
\email{rodl@mathcs.emory.edu}
\thanks{The second author was supported by NSF grant DMS 1301698.}

\author[Mathias Schacht]{Mathias Schacht}
\address{Fachbereich Mathematik, Universit\"at Hamburg, Hamburg, Germany}
\email{schacht@math.uni-hamburg.de}

\keywords{extremal graph theory, Tur\'an's problem}
\subjclass[2010]{05C35 (primary), 05C65, 05C80 (secondary)}

\begin{abstract}
For a $k$-uniform hypergraph $F$ 
let $\ex(n,F)$ be the maximum number of edges of a
$k$-uniform $n$-vertex hypergraph $H$
which contains no copy of~$F$.
Determining or estimating $\ex(n,F)$ is a classical and central problem 
in extremal combinatorics. While for $k=2$ this problem is well understood, 
due to the work of Tur\'an and of Erd\H os and Stone, only very little is known 
for $k$-uniform hypergraphs for $k>2$. We focus on the case when $F$
is a~$k$-uniform hypergraph with three edges on~$k+1$ vertices. Already this 
very innocent (and maybe somewhat particular looking) problem is still 
wide open even for~$k=3$.

We consider a variant of the problem where the large hypergraph~$H$ 
enjoys additional hereditary density conditions. Questions of this type were
suggested by Erd\H os and S\'os about 30 years ago. We show that 
every $k$-uniform hypergraph $H$ with density $>2^{1-k}$ with respect to 
every large collection of $k$-cliques induced by sets of $(k-2)$-tuples
contains a copy of~$F$. The required density $2^{1-k}$ is best possible 
as higher order tournament constructions show.

Our result can be viewed as a common generalisation of the first 
extremal result in graph theory due to Mantel (when $k=2$ and the hereditary density 
condition reduces to a normal density condition) and a recent result of Glebov, Kr{\'a}{\soft{l}}, and Volec
(when $k=3$ and large subsets of vertices of $H$ induce a subhypergraph of density $>1/4$).
Our proof for arbitrary $k\geq 2$ utilises the regularity 
method for hypergraphs.
\end{abstract}

\maketitle

\section{Introduction} 
\label{sec:intro}

\subsection{Tur\'{a}n's hypergraph problem} 
\label{subsec:thp}
A {\it $k$-uniform hypergraph} is a pair
$F=(V, E)$, where~$V$ is a finite set of {\it vertices} and 
$E\subseteq V^{(k)}=\{e\subseteq V\colon |e|=k\}$
is a set of $k$-element subsets of $V$, whose members are called the {\it edges} of $F$.
As usual $2$-uniform hypergraphs are simply called graphs.
With his seminal work~\cite{Tu41}, Tur\'{a}n established a new research area in combinatorics by initiating the systematic study of the 
so-called {\it extremal function} associated with any such hypergraph~$F$. This 
function maps every positive integer~$n$ to the largest number $\ex(n, F)$ of edges 
that an \emph{$F$-free}, $k$-uniform hypergraph~$H$ on $n$ vertices can have, i.e., an $n$-vertex hypergraph 
without containing $F$ as a (not necessarily induced) subhypergraph. 
It is not hard to observe that for every $k$-uniform hypergraph $F$ the limit
\[
	\pi(F)=\lim_{n\to\infty}\frac{\ex(n, F)}{\binom{n}{k}}\,,
\]
known as the {\it Tur\'{a}n density of $F$}, exists. The problem of determining the
Tur\'{a}n densities of all hypergraphs is likewise referred to as {\it Tur\'{a}n's 
hypergraph problem} in the literature.

The first nontrivial instance of these problems is
the case where $k=2$ and $F=K_3$ is a triangle, i.e., the unique 
graph
with three vertices and three edges. More than a century ago, Mantel~\cite{Ma07} 
proved $\ex(n, K_3)=\lfloor n^2/4\rfloor$ for every positive integer $n$. 
Let us record an immediate consequence of this result.  

\begin{thm}[Mantel] \label{thm:Mantel}
We have $\pi(K_3)=\tfrac 12$.
\end{thm}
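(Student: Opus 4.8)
The plan is to derive the statement directly from the exact extremal number recorded just above, Mantel's equality $\ex(n,K_3)=\lfloor n^2/4\rfloor$, valid for every positive integer $n$. Substituting this into the definition of the Tur\'an density with $k=2$ gives
\[
	\pi(K_3)=\lim_{n\to\infty}\frac{\ex(n,K_3)}{\binom n2}=\lim_{n\to\infty}\frac{\lfloor n^2/4\rfloor}{n(n-1)/2}\,,
\]
and since $\lfloor n^2/4\rfloor=n^2/4+O(1)$ while $\binom n2=n^2/2+O(n)$, the quotient converges to $\tfrac12$. So, granting Mantel's formula, there is essentially nothing to do and no real obstacle; the only step is this elementary limit computation, and one does not even need to invoke the general existence of the limit separately, since it is exhibited directly.

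If one prefers a self-contained argument --- which also makes the value $\tfrac12$ transparent without appealing to \cite{Ma07} --- I would instead establish the matching bounds $\pi(K_3)\ge\tfrac12$ and $\pi(K_3)\le\tfrac12$ separately. For the lower bound it suffices to exhibit, for each $n$, a triangle-free $n$-vertex graph with $(\tfrac12-o(1))\binom n2$ edges, and the balanced complete bipartite graph $K_{\lceil n/2\rceil,\lfloor n/2\rfloor}$ works: it is bipartite, hence triangle-free, and has $\lceil n/2\rceil\lfloor n/2\rfloor=\lfloor n^2/4\rfloor$ edges. For the upper bound I would use the classical degree estimate: in a triangle-free graph $G$ the neighbourhoods $N(x)$ and $N(y)$ are disjoint for every edge $xy$, so $d(x)+d(y)\le n$; summing over all edges yields $\sum_v d(v)^2=\sum_{xy\in E(G)}\bigl(d(x)+d(y)\bigr)\le n\,\lvert E(G)\rvert$, while Cauchy--Schwarz gives $\sum_v d(v)^2\ge(2\lvert E(G)\rvert)^2/n$, and combining the two forces $\lvert E(G)\rvert\le n^2/4$. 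Dividing by $\binom n2$ and letting $n\to\infty$ then gives $\pi(K_3)\le\tfrac12$.

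In the self-contained route the one point that needs a line of care is matching the two bounds asymptotically, i.e.\ checking $\lfloor n^2/4\rfloor\big/\binom n2\to\tfrac12$; but this is the same elementary estimate as in the first paragraph. Either way the argument is very short --- the entire substance of the theorem is contained in Mantel's bound, which we take as given.
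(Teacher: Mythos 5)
Your first paragraph is exactly what the paper does: it records Theorem~\ref{thm:Mantel} as an immediate consequence of Mantel's equality $\ex(n,K_3)=\lfloor n^2/4\rfloor$, and the only content is the elementary limit computation you carry out. The self-contained argument you add (balanced complete bipartite graphs for the lower bound, the degree-sum plus Cauchy--Schwarz estimate for the upper bound) is also correct, but it is not needed for what the paper claims here.
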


The next step was taken by Tur\'{a}n himself~\cite{Tu41}, who proved that more generally 
we have~$\pi(K_r)=\tfrac{r-2}{r-1}$ for each integer $r\ge 2$, where $K_r$ denotes the
graph on $r$ vertices with all possible $\binom{r}{2}$ edges. This was further generalised by Erd\H{o}s
and Stone~\cite{ErSt46} and from their result one easily gets the full answer to the 
Tur\'{a}n density problem in the case of graphs. Notably, we have
\[
	\pi(F)=\frac{\chi(F)-2}{\chi(F)-1} 
\]
for every graph $F$ with at least one edge, where $\chi(F)$ denotes the {\it chromatic
number} of~$F$, i.e., the least integer $r$ for which there exists a graph homomorphism
from $F$ to~$K_r$ (see also~\cite{ErSi66}, where the result in this form appeared first). 

Despite these fairly general results about graphs, the current knowledge about 
Tur\'{a}n densities of general hypergraphs is very limited, even in the $3$-uniform case. 
For instance, concerning the $3$-uniform hypergraphs $K_4^{(3)-}$ and $K_4^{(3)}$ 
on four vertices with three and four edges respectively, it is only known that
\[
		\frac{2}{7}\leq\pi(K_4^{(3)-})\leq 0.2871
		\qqand
		\frac{5}{9}\leq\pi(K_4^{(3)})\leq 0.5616\,.
\] 
The lower bounds are due to Frankl and F\"uredi~\cite{FrFu84} and to Tur\'{a}n (see, e.g.,~\cite{Er77}).
In both cases they are believed to be optimal and they are derived from explicit constructions.
The upper bounds were obtained by computer assisted calculations based on Razborov's
\emph{flag algebra method }introduced in~\cite{Ra07}.
They are due to Baber and Talbot~\cite{BaTa11}, and to Razborov himself~\cite{Ra10}. 

This scarcity of results, however, is not due to a lack of interest or effort by 
combinatorialists. It rather seems that these problems are
hard for reasons that might not be completely understood yet. 
For a more detailed discussion we refer to Keevash's survey~\cite{Ke11}.

\subsection{Tur\'{a}n problems in vertex uniform hypergraphs}
\label{subsec:fail}

A variant of these questions suggested by Erd\H{o}s and S\'{o}s 
(see e.g.,~\cites{ErSo82, Er90}) concerns $F$-free hypergraphs $H$
that are uniformly dense with respect to sets of vertices.  

\begin{dfn} \label{dfn:vtxdense}
For real numbers $d\in[0, 1]$ and $\eta>0$ we say that a $k$-uniform hypergraph 
$H=(V, E)$ is {\it $(d, \eta, 1)$-dense} if for all $U\subseteq V$ the 
estimate 
\[
	|U^{(k)}\cap E|\ge d\binom{|U|}{k}-\eta\,|V|^k
\]
holds. 
\end{dfn} 

This means that when one passes to a linearly sized induced subhypergraph of $H$ one 
still has an edge density that cannot be much smaller than $d$. This notion
is closely related to ``vertex uniform'' or ``weakly quasirandom'' hypergraphs
appearing in the literature.
The~$1$ in $(d, \eta, 1)$-dense is supposed to indicate that the density 
is measured with respect to subsets of vertices, which one may view as $1$-uniform
hypergraphs. Our reason for including it is that we intend to put this concept into a broader 
context with Definition~\ref{dfn:jdense} below, where we will allow arbitrary 
$j$-uniform hypergraphs to play the r\^{o}le of the subset $U$. 
For now for every $k$-uniform hypergraph $F$ one may define
\begin{multline*}
	\pi_1(F)=\sup\bigl\{d\in[0,1]\colon \text{for every $\eta>0$ and $n\in \NN$ there exists}\\
	\text{a $k$-uniform, $F$-free,  $(d,\eta, 1)$-dense hypergraph $H$ with $|V(H)|\geq n$}\bigr\}\,.
\end{multline*}

Erd\H{o}s~\cite{Er90} realised that a randomised version of the ``tournament hypergraphs'',
which appeared in joint work with Hajnal~\cite{ErHa72}, provides the lower bound 
\[
	\pi_1\bl K^{(3)-}_4\br\ge \tfrac 14\,,
\] 
and asked (together with S\'os) whether equality holds.
This was recently confirmed by Glebov, Kr{\'a}{\soft{l}}, and Volec~\cite{GKV}
and an alternative proof appeared in~\cite{RRS-a}.

\begin{thm}[Glebov, Kr{\'a}{\soft{l}} \& Volec]
\label{thm:K4-}
We have $\pi_1\bl K^{(3)-}_4\br=\tfrac 14$.
\end{thm}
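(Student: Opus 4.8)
The lower bound $\pi_1(K^{(3)-}_4)\ge\tfrac14$ is exactly the randomised tournament construction of Erd\H{o}s recalled above, so the content of the theorem is the matching upper bound, which I would establish by the regularity method for $3$-uniform hypergraphs --- this being the case $k=3$ of the general scheme. Concretely, one must show that for every $\delta>0$ there exist $\eta>0$ and $n_0\in\NN$ such that every $(\tfrac14+\delta,\eta,1)$-dense $3$-uniform hypergraph $H$ on $n\ge n_0$ vertices contains a copy of $K^{(3)-}_4$. It is convenient to observe at the outset that $K^{(3)-}_4$, on vertex set $\{a,b,c,d\}$ with edge set $\{abc,abd,acd\}$, is the same thing as a vertex $a$ whose link graph contains a triangle --- equivalently, four vertices carrying three edges that pairwise intersect in the pairs $ab$, $ac$, $ad$.

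The plan is as follows. First I would apply the hypergraph regularity lemma to $H$. This produces an equitable vertex partition $V(H)=V_1\dcup\dots\dcup V_t$ together with, for each pair $\{i,j\}$, a partition of the complete bipartite graph between $V_i$ and $V_j$ into boundedly many $\eps$-regular bipartite graphs $P^{\alpha}_{ij}$, in such a way that all but an $\eps$-fraction of the triads $P^{\alpha}_{ij}\cup P^{\beta}_{jk}\cup P^{\gamma}_{ik}$ are $\eps$-regular with respect to $E(H)$. After discarding the irregular triads, the triads on which $H$ has relative density at most $\tfrac14+\tfrac\delta2$, and a few further negligible parts, I record the surviving \emph{dense} triads as the edges of a reduced $3$-uniform hypergraph $\cA$ whose vertices are the bipartite graphs $P^{\alpha}_{ij}$ --- each vertex of $\cA$ thus sitting on a definite pair of clusters. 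By the dense counting (embedding) lemma for $3$-uniform hypergraphs, it then suffices to exhibit inside $\cA$ six vertices $P_{ab},P_{ac},P_{ad},P_{bc},P_{bd},P_{cd}$, one on each of the six pairs among four clusters $V_a,V_b,V_c,V_d$, such that $\{P_{ab},P_{ac},P_{bc}\}$, $\{P_{ab},P_{ad},P_{bd}\}$ and $\{P_{ac},P_{ad},P_{cd}\}$ are all edges of $\cA$; call such a configuration a \emph{reduced copy} of $K^{(3)-}_4$.

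Two ingredients then remain. The first is an \emph{inheritance lemma}: the assumption that $H$ is $(\tfrac14+\delta,\eta,1)$-dense has to be converted, with the usual error terms that are negligible once $\eta$ is small and $t$ large, into a uniform density property of $\cA$ with threshold $\tfrac14$ --- precisely, the notion of density with respect to collections of triangles induced by sets of vertices that appears in the abstract, specialised to $k=3$. This is proved by summing the defining inequality of $(\tfrac14+\delta,\eta,1)$-density over suitable unions $U=\bigcup_{i\in X}V_i$ of clusters, comparing with the number of triangles supported on the dense triads, and using regularity to control how $E(H)$ is distributed over the triads. The second ingredient, which I expect to be the principal obstacle, is a Tur\'an-type theorem for the reduced hypergraph in isolation: every reduced $3$-uniform hypergraph $\cA$ that is uniformly dense in the above sense with threshold strictly above $\tfrac14$ contains a reduced copy of $K^{(3)-}_4$. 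This is the genuine analogue of Mantel's theorem in the reduced world --- for $k=2$ the whole scheme degenerates to Theorem~\ref{thm:Mantel} --- and it is here that the value $\tfrac14=2^{1-3}$ is sharp. Assuming no reduced copy of $K^{(3)-}_4$ is present forces the ``links'' of the clusters to be triangle-free in the appropriate sense; a direct appeal to Mantel's theorem would only bound the relevant density by $\tfrac12$, but exploiting the additional freedom in the reduced structure --- in particular the freedom to restrict attention to well-chosen sub-collections of the bipartite graphs $P^{\alpha}_{ij}$ --- one can push this down to $\tfrac14$, contradicting the hypothesis; the tournament construction certifies that $\tfrac14$ cannot be lowered. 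Assembling the inheritance lemma, this reduced Mantel-type theorem, and the counting lemma then yields (in fact many) copies of $K^{(3)-}_4$ in $H$, completing the proof.
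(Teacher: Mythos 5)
Your overall architecture coincides with the paper's: lower bound from the random tournament construction, upper bound via the hypergraph regularity lemma, passage to a reduced $3$-uniform hypergraph $\cA$ whose vertices are the bipartite graphs of the partition, and an application of the embedding lemma once a suitable configuration is found in $\cA$. The reduction itself is set up correctly, including the right target configuration (three constituents' edges pairwise intersecting in the classes $\cP_{ab}$, $\cP_{ac}$, $\cP_{ad}$). But the proposal has a genuine gap exactly where the theorem is hard: the ``reduced Mantel-type theorem'' --- that every sufficiently dense reduced hypergraph on a large index set supports a $K_4^{(3)-}$ --- is asserted, not proved. The sentence ``exploiting the additional freedom in the reduced structure \dots one can push this down to $\tfrac14$'' is not an argument, and it is not clear what ``the links of the clusters are triangle-free in the appropriate sense'' would even mean for a reduced hypergraph in which each constituent is merely dense. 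The paper's proof of this step (Proposition~\ref{prop:reduced}, Sections~\ref{sec:proof}--\ref{sec:triangle}) requires two nontrivial graph-theoretic inputs: a \emph{path lemma}, which shows via an iterated Cauchy--Schwarz argument that if all the bipartite projections of a constituent were ``poor'' (few vertices starting more than $(\tfrac14+\xi)|X||Y|$ two-edge walks) then the constituent would have density below $2^{1-k}+\eps$, so each constituent admits a rich projection; and a \emph{triangle lemma}, which shows that many pairwise rich bipartite graphs, assembled by a Ramsey-type selection, must contain a triangle, which by Fact~\ref{fact:revision} yields the supported $F^{(k)}$. None of this is visible, even in outline, in your proposal, and this is where the value $\tfrac14$ actually enters.

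A secondary issue: your inheritance step is too optimistic as stated. To conclude that a specific constituent $\cA_{\{a,b,c\}}$ is dense you need control of the \emph{crosswise} density of $H$ on triples of sets $U_1\times U_2\times U_3$ (the $(d,\eta,[3]^{(1)})$-denseness of Definition~\ref{dfn:dense}), not merely the density on unions $U=\bigcup_{i\in X}V_i$ that the $(d,\eta,1)$-hypothesis provides. For $j=1$ these notions do not coincide directly; the paper bridges them in Proposition~\ref{prop:technical}\ref{it:ptechii} by passing to an induced subhypergraph found with the weak regularity lemma together with a Ramsey--Tur\'an selection of clusters. Summing the defining inequality over unions of clusters, as you propose, does not by itself deliver the crosswise statement.
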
 

Let us recall the construction of the tournament hypergraphs 
yielding the lower bound. 

\begin{exmp}[Tournament construction]
\label{ex:tourn3}
Consider for some positive integer $n$ a random tournament $T_n$ with vertex set 
$[n]=\{1, \ldots, n\}$. This means that we direct each unordered pair 
$\{i, j\}\in [n]^{(2)}$ uniformly at random either as $(i, j)$  or as $(j, i)$. 
These~$\binom{n}{2}$ choices are supposed to be mutually independent. 
In $T_n$ we see on any triple $\{i,j,k\}\in [n]^{(3)}$  either 
a cyclically oriented triangle or a transitive tournament. 
Let $H(T_n)$ be the random $3$-uniform hypergraph with vertex set $[n]$ having exactly 
those triples $\{i,j,k\}$ as edges that span a cyclic triangle in $T_n$. 
Clearly this happens for every fixed triple $\{i,j,k\}$ with probability~$\tfrac 14$ 
and using standard probabilistic arguments it is not hard 
to check that, moreover, for every fixed $\eta>0$ the probability that the 
tournament hypergraph $H(T_n)$ is $\bl \tfrac 14, \eta, 1\br$-dense tends 
to $1$ as $n$ tends to infinity (see Lemma~\ref{lem:Hdense} below). 
Further one sees easily that $H(T_n)$ can never contain a $K^{(3)-}_4$,
since any four vertices can span at most two cyclic triangles in any tournament. 
Consequently, we have indeed $\pi_1\bl K^{(3)-}_4\br\ge \tfrac 14$.
\end{exmp}
 
Theorem~\ref{thm:K4-} asserts that the tournament hypergraph $H(T_n)$
is optimal among uniformly dense
$K^{(3)-}_4$-free hypergraphs. There have been attempts to generalise 
this statement to the class of $k$-uniform hypergraphs
(see e.g., \cite{FrRo88} or~\cite{GKV}).
The work presented here has the goal to formulate and verify such an extension
(see Theorem~\ref{thm:main} below).  For that we generalise the construction from
Example~\ref{ex:tourn3}. We consider hypergraphs arising from 
\emph{higher order tournaments},  which make use of some standard concepts from 
simplicial homology theory (see e.g.,~\cite{Po}*{Ch.~I,~\snake  4}).

\subsection{Higher order tournaments}
\label{subsec:hot}
Let $X$ denote some nonempty finite set. 
By an {\it enumeration} of $X$ we mean
a tuple~$(x_1, \ldots, x_\ell)$ with
$X=\{x_1, \ldots, x_\ell\}$ and $\ell=|X|$. 
An {\it orientation} of $X$ is obtained by putting a factor of $\pm 1$
in front of such an enumeration.
Two orientations $\eps(x_1, \ldots, x_\ell)$ 
and~$\eps'(x_{\tau(1)}, \ldots, x_{\tau(\ell)})$ are {\it identified} 
if either $\eps=\eps'$ and the permutation $\tau$ is even, 
or if $\eps=-\eps'$ and $\tau$ is odd.
So altogether there are exactly two orientations of $X$, 
and we may write, e.g., $+(x, y, z) = -(z, y, x)$. 

If $|X|\ge 2$ and $x\in X$, then every orientation $\sigma$ of $X$ induces an 
orientation $\sigma_x$ of~$X\setminus\{x\}$ in the following way: One picks a representative 
of $\sigma$ having $x$ at the end of the enumeration, and then one removes $x$. 
For instance, the orientation $+(x, y)$ of $\{x, y\}$ induces the orientations 
$+(x)$ and $-(y)$ of $\{x\}$ and $\{y\}$ respectively, while $+(x, y, z)$ induces 
$+(x, y)$, $+(y, z)$, and $-(x, z)$.

For any integers $n\ge k\ge 2$ an {\it $(k-1)$-uniform tournament}  
$T_n^{(k-1)}$ is given by selecting one of the two possible orientations 
of every $(k-1)$-element subset of $[n]$. 
We associate with any $(k-1)$-uniform tournament $T_n^{(k-1)}$
a $k$-uniform hypergraph $H\bl T_n^{(k-1)}\br$ with 
vertex set~$[n]$ by declaring a $k$-element set $e\in [n]^{(k)}$ to be an edge if and only if
there is an orientation on $e$ that induces on all $(k-1)$-element subsets of $e$ the orientation
provided by~$T_n^{(k-1)}$. For $k\geq 3$ this is equivalent to saying that any two distinct 
$x, x'\in e^{(k-1)}$ induce opposite orientations of the $(k-2)$-set $x\cap x'$,
which follows from the fact that the $(k-2)$-nd (simplicial) homology group of 
a $(k-1)$-simplex vanishes.

Moreover, if
$T_n^{(k-1)}$ gets chosen uniformly at random, then for $k\ge 3$ the probability that 
$H\bl T^{(k-1)}_n\br$ is $(2^{1-k}, \eta, 1)$-dense tends for each fixed positive 
real number $\eta$ to $1$ as $n$ tends to infinity.
 
For $k=2$, however, the last mentioned fact is wrong. In fact $H\bl T_n^{(1)}\br$ 
is always a complete bipartite graph and in the random case the sizes of its vertex 
classes are with high probability close to $\tfrac n2$. 
Such graphs may be used to demonstrate the lower bound in Mantel's 
theorem. One is thus prompted to believe that both, this very old 
result and the fairly new Theorem~\ref{thm:K4-}, are special cases of a more general 
theorem about $k$-uniform hypergraphs.

The next step towards finding this common generalisation is to come up with
a \hbox{$k$-uniform} hypergraph that cannot appear in $H\bl T_n^{(k-1)}\br$. 
Notice to this end that up to isomorphism there is just one $k$-uniform hypergraph 
$F^{(k)}$ on $(k+1)$ vertices with three edges,
and that $F^{(2)}=K_3$ while $F^{(3)}=K^{(3)-}_4$.
We observe that the higher order tournament construction always 
gives $F^{(k)}$-free hypergraphs.

\begin{fact} \label{fact:Fk-free} For every $k\geq 2$ 
the hypergraph $H\bl T_n^{(k-1)}\br$ is always $F^{(k)}$-free.
\end{fact}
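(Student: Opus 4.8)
The plan is to argue by contradiction, dealing with the degenerate case $k=2$ separately and then settling $k\ge 3$ by a short sign-chasing argument with orientations. For $k=2$ one only needs the observation already recorded in the text: a $1$-uniform tournament $T_n^{(1)}$ is nothing but a partition of $[n]$ into ``positive'' and ``negative'' vertices (according to the orientation chosen on each singleton), and a pair $\{i,j\}$ is an edge of $H(T_n^{(1)})$ exactly when $i$ and $j$ belong to different classes. Hence $H(T_n^{(1)})$ is a complete bipartite graph, so it contains no triangle, and since $F^{(2)}=K_3$ there is nothing more to prove here.

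Now suppose $k\ge 3$ and, for contradiction, that $H:=H(T_n^{(k-1)})$ contains a copy of $F^{(k)}$. Because $F^{(k)}$ is the unique $k$-uniform hypergraph with three edges on $k+1$ vertices, such a copy amounts to a $(k+1)$-element set $W\subseteq[n]$ together with three distinct vertices $a,b,c\in W$ for which the $k$-sets $W\setminus\{a\}$, $W\setminus\{b\}$ and $W\setminus\{c\}$ are all edges of $H$. I would then put $y=W\setminus\{a,b,c\}$, a set of size $k-2\ge 1$, and consider the three $(k-1)$-element sets $g_{ab}=W\setminus\{a,b\}$, $g_{ac}=W\setminus\{a,c\}$ and $g_{bc}=W\setminus\{b,c\}$; any two of them intersect precisely in $y$. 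The point is that $g_{ab}$ and $g_{ac}$ are distinct $(k-1)$-subsets of the edge $W\setminus\{a\}$, that $g_{ab}$ and $g_{bc}$ are $(k-1)$-subsets of the edge $W\setminus\{b\}$, and that $g_{ac}$ and $g_{bc}$ are $(k-1)$-subsets of the edge $W\setminus\{c\}$.

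Next I would invoke the edge characterisation recorded in the excerpt: inside any edge of $H$, any two distinct $(k-1)$-subsets induce opposite orientations on their $(k-2)$-element intersection. Writing $o_{ab},o_{ac},o_{bc}$ for the orientations that $T_n^{(k-1)}$ assigns to $g_{ab},g_{ac},g_{bc}$ and applying the characterisation to the edges $W\setminus\{a\}$, $W\setminus\{b\}$, $W\setminus\{c\}$ in turn, one reads off three relations among the orientations these induce on $y$: that from $o_{ab}$ is the reverse of that from $o_{ac}$, and also the reverse of that from $o_{bc}$, while those from $o_{ac}$ and $o_{bc}$ are reverses of each other as well. Composing the three relations forces the orientation of $y$ induced by $o_{bc}$ to equal its own reverse, which is absurd since an orientation of a nonempty set never coincides with its reverse. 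This contradiction proves the Fact.

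The argument uses no estimates and is indifferent to randomness: it holds for every $(k-1)$-uniform tournament, not just for random ones. The only place calling for care --- and the step I expect to be the main (minor) obstacle --- is the bookkeeping of orientation signs: one must fix the convention for the induced orientation $\sigma_x$ as in the definition and use the elementary identity $(\sigma_p)_q=-(\sigma_q)_p$ (an incarnation of $\partial^2=0$), so that the three ``opposite orientation'' relations chain together precisely into the stated contradiction; everything else is purely set-theoretic.
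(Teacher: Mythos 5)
Your proof is correct and follows essentially the same route as the paper: both arguments consider the three $(k-1)$-sets $y\cup\{a\}$, $y\cup\{b\}$, $y\cup\{c\}$ and use the characterisation that inside an edge any two $(k-1)$-subsets induce opposite orientations on their common $(k-2)$-set. The only cosmetic difference is that the paper applies pigeonhole (two of the three induced orientations of $y$ must coincide, contradicting one edge), whereas you chain all three ``opposite'' relations into a parity contradiction — the same fact in different clothing.
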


\begin{proof} For $k=2$ 
the graph $H(T^{(1)})$ is always a complete bipartite graph, which clearly 
contains no triangle. For $k\geq 3$ we 
argue indirectly. Let $w\cup \{a, b\}$, $w\cup\{a, c\}$, and $w\cup\{b, c\}$ 
be the three edges of an $F^{(k)}$ in $H\bl T_n^{(k-1)}\br$, where $|w|=k-2$ 
and $a, b, c\not\in w$ are distinct. 
The three $(k-1)$-sets $w\cup\{a\}$, $w\cup\{b\}$, and $w\cup\{c\}$ receive orientations
from~$T_n^{(k-1)}$, that in turn induce orientations of~$w$. At least two of
these orientations of $w$ must coincide, say the ones induced by $w\cup\{a\}$ and 
$w\cup\{b\}$. But now $w\cup \{a, b\}$ cannot be an edge of~$H\bl T_n^{(k-1)}\br$,
contrary to our assumption. 
\end{proof}

So what we are looking for is a precise sense in which the random higher order tournament 
hypergraph~$H\bl T_n^{(k-1)}\br$ is optimal among $F^{(k)}$-free hypergraphs. 
For $k\ge 3$ in~\cite{FrRo88} and~\cite{GKV} the question whether
\begin{equation}\label{eq:false}
	\pi_{1}\bl F^{(k)}\br\overset{?}{=}2^{1-k}
\end{equation}
was suggested.

For $k=2$ this statement fails, since~$\pi_{1}(F)=0$ 
for every graph~$F$ (see e.g.,~\cite{Ro86}). 
Moreover, a $4$-uniform hypergraph considered in a different context by Leader and 
Tan~\cite{LeTa10} shows that the above formula fails for $k=4$ also.
In fact, this example 
gives the lower 
bound $\pi_1\bl F^{(4)}\br\ge\tfrac 14$ and we will return to this construction in 
Section~\ref{subsec:exmp}.

\subsection{A generalised Tur\'{a}n problem} 
\label{subsec:sym}
All this seems to indicate that vertex uniformity might not be the correct notion of
being ``uniformly dense'' for a common generalisation of Theorem~\ref{thm:Mantel}
and Theorem~\ref{thm:K4-}. The goal of the present subsection is to introduce a stronger 
concept of uniformly dense hypergraphs so that for the corresponding Tur\'an density
a statement like~\eqref{eq:false} becomes true. 
 
Given a $j$-uniform hypergraph $G^{(j)}$ with $j<k$ we denote 
the collection of $k$-subsets of its vertex set that span cliques $K^{(j)}_k$ 
of size $k$ by $\cK_k\bl G^{(j)}\br$.

\begin{dfn}\label{dfn:jdense}
For $d\in [0,1]$, $\eta>0$, and $j\in [0, k-1]$ a $k$-uniform hypergraph $H=(V, E)$
is {\it $(d, \eta, j)$-dense} if 
\[
	\big|\cK_k\bl G^{(j)}\br\cap E\big|\ge d\, \big|\cK_k\bl G^{(j)}\br\big|-\eta\,|V|^k
\]
holds for all $j$-uniform hypergraphs $G^{(j)}$ with vertex set $V$. 
\end{dfn}

In the degenerate case $j=0$ this simplifies to $H$ being $(d, \eta, 0)$-dense if
\begin{equation}\label{eq:j=0}
	|E|\ge d\binom{|V|}{k}-\eta\,|V|^k\,,
\end{equation}
since on any set $V$ there are only two $0$-uniform hypergraphs -- the one with empty edge set
and the one with the empty set being an edge.
Also note that for $j=1$ we recover Definition~\ref{dfn:vtxdense}. We proceed by setting
\begin{multline*}
	\pi_j(F)=\sup\bigl\{d\in[0,1]\colon \text{for every $\eta>0$ and $n\in \NN$ there exists}\\
	\text{a $k$-uniform, $F$-free,  $(d,\eta, j)$-dense hypergraph $H$ with $|V(H)|\geq n$}\bigr\}
\end{multline*}
for every $k$-uniform hypergraph $F$ and propose the following problem.
(A more general problem 
will be discussed in Section~\ref{subsec:ide}.)

\begin{prob} \label{prob:sym}
Determine $\pi_{j}(F)$ for all $k$-uniform hypergraphs $F$ and all $j\in [0, k-2]$.
\end{prob}
Notice that, owing to~\eqref{eq:j=0}, the special case where $j=0$ corresponds to Tur\'{a}n's
classical question of determining $\pi(F)$. At the other end for $j=k-1$ it 
is known that $\pi_{k-1}(F)=0$ for every $k$-uniform hypergraph $F$. This essentially follows from the work in~\cite{KRS02}
(or it can also be verified by means of a straightforward application of the hypergraph regularity method).
Moreover, it seems that for a 
fixed hypergraph $F$ the Problem~\ref{prob:sym} has the tendency of becoming 
easier the larger we make $j$. The reason for this might be that by increasing $j$ 
one gets a stronger hypothesis about the hypergraphs $H$ in which one intends 
to locate a copy of $F$ and this additional information seems to be very helpful. 
In fact, for every $k$-uniform hypergraph $F$ we have
\begin{equation}\label{eq:chain}
	\pi(F)=\pi_{0}(F)\ge \pi_{1}(F)\ge \dots \ge \pi_{k-2}(F)\ge \pi_{k-1}(F)=0\,,
\end{equation}
since $\cK_k(G^{(j)})=\cK_k(G^{(j+1)})$ for every $j$-uniform hypergraph $G^{(j)}$ 
with $G^{(j+1)}=\cK_{j+1}(G^{(j)})$. 
For fixed $F$ the quantities appearing in this chain of inequalities will probably 
be the harder to determine the further they are on the left. For example 
determining~$\pi_j(K^{(k)}_{\l})$ for cliques and $j\leq k-3$ is at least as hard as Tur\'an's original 
problem for $3$-uniform hypergraphs. This suggest that  
Problem~\ref{prob:sym} for the case $j=k-2$ is the first interesting case 
and we will focus on $\pi_{k-2}(\cdot)$ here.

\subsection{The three edge theorem}
\label{subsec:main}

Let us now resume our discussion of higher order tournament hypergraphs 
and of the extremal problem for $F^{(k)}$. Our main result 
is the following.

\begin{thm}[Three edge theorem]
\label{thm:main}
We have $\pi_{k-2}\bl F^{(k)}\br = 2^{1-k}$ for every $k\ge 2$.
\end{thm}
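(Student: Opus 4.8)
The plan is to prove the two inequalities separately. The lower bound $\pi_{k-2}(F^{(k)})\ge 2^{1-k}$ follows the template of Example~\ref{ex:tourn3}: one shows that a uniformly random $(k-1)$-uniform tournament $T_n^{(k-1)}$ produces a hypergraph $H\bl T_n^{(k-1)}\br$ that is, with high probability, $(2^{1-k}-o(1),\eta,k-2)$-dense, while Fact~\ref{fact:Fk-free} already guarantees it is $F^{(k)}$-free. The density estimate requires checking that for \emph{every} $(k-2)$-uniform hypergraph $G^{(k-2)}$ on $[n]$, a proportion at least $2^{1-k}$ of the cliques in $\cK_k(G^{(k-2)})$ are edges of $H\bl T_n^{(k-1)}\br$; this should follow from a concentration argument, since each such $k$-clique independently lands in $H\bl T_n^{(k-1)}\br$ with probability essentially $2^{1-k}$ (the chance that some orientation of the $k$-set is consistent on all its $(k-1)$-subsets), and there are at most $2^{n^{k-2}}$ choices of $G^{(k-2)}$, which is dwarfed by the failure probability $\exp(-c n^k)$ of Chernoff-type bounds. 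I would isolate this as a lemma (the promised Lemma~\ref{lem:Hdense}).

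The upper bound $\pi_{k-2}(F^{(k)})\le 2^{1-k}$ is the substantial direction. Fix $d>2^{1-k}$; I must show that every sufficiently large $(d,\eta,k-2)$-dense $k$-uniform hypergraph $H$ contains a copy of $F^{(k)}$, provided $\eta$ is small. The approach is the hypergraph regularity method: apply the regularity lemma to $H$ to obtain a regular partition with an accompanying family of lower-uniformity partitions, and pass to the \emph{reduced hypergraph}. The $(d,\eta,k-2)$-density hypothesis should translate, via a suitable counting/choosing argument over the $(k-2)$-uniform ``polyad'' structure, into the statement that in the reduced picture a $d$-fraction (up to $o(1)$) of the relevant $k$-polyads support a dense regular $k$-partite piece of $H$. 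The core combinatorial task is then: in this reduced hypergraph, locate a configuration of three $k$-polyads sharing a common $(k-2)$-set $w$ and sitting on vertex-clusters $w\cup\{a\},w\cup\{b\},w\cup\{c\}$ (in the cluster sense) that are all ``dense,'' and such that the three dense regular pieces can be simultaneously populated — i.e., one can pick actual vertices realising $w$, $a$, $b$, $c$ with all three $k$-sets being edges. Once such a reduced configuration is found, a standard dense-counting/embedding lemma for the regularity method produces the copy of $F^{(k)}$ in $H$.

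The heart of the matter, and the step I expect to be the main obstacle, is the pigeonhole/extremal argument showing that density exceeding $2^{1-k}$ forces the three-polyad configuration. The natural model is: over a fixed $(k-2)$-set $w$ of clusters together with three further clusters, consider the ``link'' structure, which one should be able to encode by orientations much as in the tournament construction; the claim that a density-$d$ hypergraph with $d>2^{1-k}$ cannot avoid all such triples is precisely the assertion that no ``fractional higher-order tournament'' beats the random one. Making this rigorous means choosing the auxiliary $(k-2)$-uniform hypergraphs $G^{(k-2)}$ in Definition~\ref{dfn:jdense} cleverly — e.g.\ as random or structured subhypergraphs of the reduced polyad structure — so that the density hypothesis can be played off against a counting identity that sums, over all relevant $(k+1)$-cluster configurations, the number of edge-triples forming an $F^{(k)}$; if this count were zero one would derive that the average density over the chosen test hypergraphs is at most $2^{1-k}$, a contradiction. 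I would expect to need a careful double-counting together with the vanishing-homology observation from the construction (each $(k-1)$-set of clusters carries one of two ``orientations'', and on a $k$-set these must be globally consistent or an $F^{(k)}$-type obstruction appears) to push the bound down to exactly $2^{1-k}$ rather than something weaker.
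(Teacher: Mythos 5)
Your overall architecture matches the paper's: random higher-order tournaments for the lower bound, and hypergraph regularity plus a reduced-hypergraph argument for the upper bound. But there are two gaps, one minor and one that swallows the main content of the theorem.

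The minor gap is in the lower bound. You assert that the events ``$y\in E(H(T_n^{(k-1)}))$'' for distinct $k$-sets $y$ are independent and invoke a Chernoff bound with failure probability $\exp(-cn^k)$. They are not independent: two $k$-sets sharing a $(k-1)$-subset both depend on the orientation of that shared set. The correct route (Lemma~\ref{lem:Hdense}) is a bounded-differences/Azuma argument: flipping one of the $\binom{n}{k-1}$ orientations changes $|E|$ by at most $n$, giving concentration $e^{-\Omega(n^{k-1})}$, which still beats the union bound over the $e^{O(n^{k-2})}$ choices of $G^{(k-2)}$. The conclusion survives, but your justification as written does not.

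The serious gap is in the upper bound. After regularisation you arrive, correctly, at the statement that must be proved: a reduced $k$-uniform hypergraph all of whose constituents have density exceeding $2^{1-k}$ supports an $F^{(k)}$ (Proposition~\ref{prop:reduced}). At this point you offer only a programme --- ``choose the test hypergraphs cleverly,'' ``careful double-counting together with the vanishing-homology observation'' --- and explicitly flag it as the step you expect to be the obstacle. It is; this is where the paper's new ideas live, and your sketch does not supply them. The paper's actual mechanism is quite different from a double-count against orientations: each constituent $\cA_y$ is projected to a $k$-partite graph in which its edges become $k$-vertex paths; a \emph{path lemma} (an inductive Cauchy--Schwarz argument counting two-edge walks) shows that density $>2^{1-k}$ forces at least one consecutive bipartite projection to be ``rich''; Ramsey's theorem makes the position of the rich projection uniform across all $k$-subsets of a large index set; and a separate \emph{triangle lemma} shows that a large system of pairwise rich bipartite graphs contains a triangle, which via Fact~\ref{fact:revision} is exactly a supported $F^{(k)}$. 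None of this is present or implied in your proposal, so the argument does not close. (A smaller technical point you also skip: to make every constituent of the reduced hypergraph dense one first needs to upgrade $(d,\eta,k-2)$-density to $(d,\eta',[k]^{(k-2)})$-density, i.e.\ Proposition~\ref{prop:technical}.)
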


It may be observed that for $k=2$ this gives Mantel's  theorem (Theorem~\ref{thm:Mantel}),
whilst for $k=3$ we get Theorem~\ref{thm:K4-}, meaning that we have indeed found a 
common generalisation of those two results. Below we show that random higher order 
tournament hypergraphs give
the lower bound in Theorem~\ref{thm:main}, which generalises the lower bound constructions of 
Theorems~\ref{thm:Mantel} and~\ref{thm:K4-}.

\begin{lemma} \label{lem:Hdense}
For $k\geq 2$ and $\eta>0$ the probability that $H\bl T_n^{(k-1)}\br$ is 
$(2^{1-k}, \eta, k-2)$-dense when the tournament $T_n^{(k-1)}$ gets chosen 
uniformly at random tends to $1$ as $n$ tends to infinity. 
\end{lemma}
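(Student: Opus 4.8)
The plan is to fix a single $(k-2)$-uniform hypergraph $G^{(k-2)}$ on the vertex set $[n]$, prove that $\bigl|\cK_k(G^{(k-2)})\cap E(H(T_n^{(k-1)}))\bigr|$ is tightly concentrated around its mean, which will turn out to be exactly $2^{1-k}\bigl|\cK_k(G^{(k-2)})\bigr|$, and then take a union bound over all such $G^{(k-2)}$. This works because there are only $2^{\binom{n}{k-2}}=\exp\bigl(O(n^{k-2})\bigr)$ choices of $G^{(k-2)}$, whereas the concentration step below will fail with probability only $\exp\bigl(-\Omega(\eta^2 n^{k-1})\bigr)$.

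First I would make the probability space explicit. Fixing the natural order on $[n]$, encode $T_n^{(k-1)}$ by attaching to each $(k-1)$-set $f\subseteq[n]$ one independent, uniformly random bit recording which of its two orientations is chosen; there are $\binom{n}{k-1}$ such bits and they are mutually independent. For a fixed $k$-set $e$, membership $e\in E(H(T_n^{(k-1)}))$ depends only on the $k$ bits attached to the facets in $e^{(k-1)}$, and by the definition of $H(T_n^{(k-1)})$ it holds precisely when this $k$-tuple of facet orientations is one of the two patterns that extend to a global orientation of $e$. Hence $\Pr\bigl[e\in E\bigr]=2\cdot 2^{-k}=2^{1-k}$ for every $k$-set $e$ (this also covers $k=2$, where the pattern just says that the two endpoints of $e$ receive opposite orientations), and by linearity $\mathbb{E}\bigl|\cK_k(G^{(k-2)})\cap E\bigr|=2^{1-k}\bigl|\cK_k(G^{(k-2)})\bigr|$ for every fixed $G^{(k-2)}$.

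Next comes the concentration step, carried out via the bounded differences inequality applied to $Y=\bigl|\cK_k(G^{(k-2)})\cap E\bigr|$ regarded as a function of the $\binom{n}{k-1}$ independent orientation bits. Changing the bit of a single $(k-1)$-set $f$ alters the indicator $\mathbf 1[e\in E]$ only for the at most $n-k+1$ sets $e\supseteq f$ with $|e|=k$, and each such indicator moves by at most $1$; thus $Y$ has bounded differences at most $n$ in every coordinate. McDiarmid's inequality therefore gives
\[
	\Pr\Bigl[\,Y<2^{1-k}\bigl|\cK_k(G^{(k-2)})\bigr|-\eta n^k\,\Bigr]
	\le\exp\!\Bigl(-\frac{2\eta^2 n^{2k}}{\binom{n}{k-1}n^2}\Bigr)
	\le\exp\bigl(-2\eta^2 n^{k-1}\bigr)\,.
\]
Summing this over all $2^{\binom{n}{k-2}}$ hypergraphs $G^{(k-2)}$ on $[n]$ bounds the probability that $H(T_n^{(k-1)})$ fails to be $(2^{1-k},\eta,k-2)$-dense by $2^{\binom{n}{k-2}}\exp(-2\eta^2 n^{k-1})$, which tends to $0$ since $\binom{n}{k-2}\le n^{k-2}=o(n^{k-1})$. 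For $k=2$ the union is merely over the two $0$-uniform hypergraphs and the same computation applies verbatim (there $Y=|E|$ counts the bichromatic pairs of the random $2$-colouring of $[n]$ that $T_n^{(1)}$ amounts to).

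There is no serious obstacle here; the one point that needs care is the accounting in the concentration step, namely the observation that a single orientation bit influences at most $O(n)$ of the indicator variables. This is exactly what makes the tail bound as small as $\exp(-\Omega(\eta^2 n^{k-1}))$, which is in turn what allows it to absorb the $\exp(O(n^{k-2}))$ terms of the union bound; it is precisely the choice $j=k-2$ (rather than $j=k-1$, where the number of hypergraphs $G^{(j)}$ would be $\exp(\Theta(n^{k-1}))$ and this balance would break) that makes the argument succeed, in agreement with the fact that $\pi_{k-1}(\cdot)=0$.
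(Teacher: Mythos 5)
Your proposal is correct and follows essentially the same route as the paper: compute $\Pr[e\in E]=2\cdot 2^{-k}=2^{1-k}$ per $k$-set, observe that flipping one orientation bit changes the count by at most $n$, apply a bounded-differences concentration inequality (the paper cites Azuma--Hoeffding, you cite McDiarmid; these are the same estimate here) to get a tail of $e^{-\Omega(\eta^2 n^{k-1})}$, and finish with a union bound over the $e^{O(n^{k-2})}$ choices of $G^{(k-2)}$. Your explicit treatment of the $k=2$ case and of why $j=k-2$ is the threshold where this balance works are welcome additions but not departures from the paper's argument.
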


\begin{proof}
Let $E$ denote the random set of edges of $H\bl T_n^{(k-1)}\br$.
Notice that for every $y\in [n]^{(k)}$ the probability of the event ``$y\in E$'' is 
$2^{1-k}$.
This is because~$y$ has $2$ orientations, each of which has a chance of $2^{-k}$ to
match the orientations of the $k$ members of $y^{(k-1)}$. 

Now the key point is that by changing the orientation of one $(k-1)$-subset of~$[n]$ 
we can change $|E|$ by at most $n$. Since
\[
	\eta n^k=n\cdot\binom{n}{k-1}^{1/2}\cdot\Theta\bl n^{(k-1)/2}\br\,,
\]
it follows from the Azuma--Hoeffding 
inequality (see, e.g.,~\cite{JLR00}*{Corollary~2.27})
that for every $(k-2)$-uniform hypergraph $G^{(k-2)}$
with vertex set $[n]$ the bad event that 
\[
	\big|\cK_k\bl G^{(k-2)}\br\cap E\big|< 2^{1-k}\, \big|\cK_k\bl G^{(k-2)}\br\big|-\eta n^k
\] 
happens has at most the probability $e^{-\Omega(n^{k-1})}$. There are only $e^{O(n^{k-2})}$
possibilities for~$G^{(k-2)}$ and, consequently, the union bound tells us that the probability
that $H\bl T_n^{(k-1)}\br$ fails to be~$(2^{1-k}, \eta, k-2)$-dense is at most
$e^{O(n^{k-2})-\Omega(n^{k-1})}=o(1)$. 
\end{proof}

Combining Fact~\ref{fact:Fk-free} and Lemma~\ref{lem:Hdense} yields 
\begin{equation}\label{eq:low}
	\pi_{k-2}\bl F^{(k)}\br \ge 2^{1-k}
\end{equation}
for every $k\geq 2$,
which establishes the lower bound 
of Theorem~\ref{thm:main}.

The upper bound is the main result of this work  
and the proof has some similar features with our alternative proof of  Theorem~\ref{thm:K4-} 
from~\cite{RRS-a}. That proof relies on the regularity method for $3$-uniform hypergraphs, 
so this time we will apply analogous results about $k$-uniform hypergraphs to $H$. 
It appears, however, that a crucial argument from~\cite{RRS-a} occurring after the 
regularisation fails to extend to the general case, even though 
it would not be too hard to adapt it to the case $k=4$. However, for the general case 
new ideas were needed, which are presented in Sections~\ref{sec:proof}--\ref{sec:triangle}.

\subsection*{Organisation} \label{subsec:organ}
In Section~\ref{subsec:ide} we 
introduce further generalised Tur\'an densities, 
and discuss some of their basic properties. The upper bound from Theorem~\ref{thm:main}
will be proved in the Sections~\ref{sec:reduced}--\ref{sec:triangle}.
This begins with revisiting the regularity method for $k$-uniform hypergraphs 
in Section~\ref{sec:reduced}.
What we gain by applying this method is described in Section~\ref{sec:rreduced}. 
Notably it will be shown there that for proving Theorem~\ref{thm:main} it suffices 
to prove a certain statement about ``reduced hypergraphs'' (see 
Proposition~\ref{prop:reduced}). 
In Section~\ref{sec:proof} this task will in turn be reduced to the verification 
of two graph theoretical results, a ``path lemma'' and a ``triangle lemma.''
These will then be proved in Section~\ref{sec:path} and 
Section~\ref{sec:triangle} respectively.   
Finally in Section~\ref{sec:conclude} we will make some concluding remarks concerning
a strengthening of Theorem~\ref{thm:main} for {\it ordered hypergraphs} and
questions for further research.

\section{A further generalisation of Tur\'an's problem}
\label{subsec:ide}

There is a further generalisation of Problem~\ref{prob:sym} 
which will allow us to
replace the assumption of~$H$ being $(2^{1-k}+\eps, \eta, k-2)$-dense in 
Theorem~\ref{thm:main} by the more manageable assumption of~$H$ being
what we call $\bl 2^{1-k}+\eps, \eta, [k]^{(k-2)}\br$-dense (see Proposition~\ref{prop:main} below). 
This condition has the 
advantage of saying something about the edge distribution of $H$ relative to families 
consisting of $\binom{k}{2}$ many $(k-2)$-uniform hypergraphs rather than just relative 
to one such hypergraph.

Given a finite set $V$ and a set $S\subseteq [k]$ we write $V^S$ for the set of all 
functions from~$S$ to~$V$. 
It will be convenient to identify the Cartesian power 
$V^k$ with $V^{[k]}$ by regarding any $k$-tuple $\seq{v}=(v_1, \ldots, v_k)$ as
being the function $i\longmapsto v_i$.
In this way, the natural projection from $V^k$ to $V^S$ becomes the restriction 
$\seq{v}\longmapsto \seq{v}\,|\,S$ and the preimage of any set $G_S\subseteq V^S$ is denoted by
\[
	\cK_k(G_S)=\bigl\{\seq{v}\in V^k\colon (\seq{v}\,|\,S)\in G_S\bigr\}\,.
\]
One may think of $G_S\subseteq V^S$ as a directed hypergraph (where vertices in the directed hyperedges are also allowed to repeat).

More generally, when we have a subset $\ccS\subseteq\powerset([k])$ of the power set of $[k]$
and a family $\ccG=\{G_S\colon S\in \ccS\}$ with $G_S\subseteq V^S$ for all $S\in\ccS$,
then we will write
\begin{equation}\label{eq:711}
	\cK_k(\ccG)=\bigcap_{S\in\ccS}\cK_k(G_S)\,.
\end{equation}
If moreover $H=(V, E)$ is a $k$-uniform hypergraph on $V$, then $e_H(\ccG)$
denotes the cardinality of the set
\[
	E_H(\ccG)=\bigl\{(v_1, \ldots, v_k)\in \cK_k(\ccG)\colon\{v_1, \ldots, v_k\}\in E\bigr\}\,.
\]
Now we are ready to state our main definitions. 

\begin{dfn} \label{dfn:dense}
Let real numbers $d\in [0, 1]$ and $\eta>0$, a $k$-uniform hypergraph $H=(V, E)$
and a set $\ccS\subseteq\powerset([k])$ be given. We say that $H$ is {\it $(d, \eta, \ccS)$-dense}
provided that 
\[
	e_H(\ccG)\ge d\,|\cK_k(\ccG)|-\eta\,|V|^k
\]
holds for every family $\ccG=\{G_S\colon S\in \ccS\}$ associating with each $S\in\ccS$ 
some $G_S\subseteq V^S$. 
\end{dfn}  

For example, if $k=3$ and $\ccS=\EV=\bigl\{\{1, 2\}, \{3\}\bigr\}$, then it is convenient to identify the sets
$V^{\{1,2\}}\cong V\times V$ and $V^{\{3\}}\cong V$. 
This way saying that $H$ is $(d, \eta, \EV)$-dense means 
that for all sets $G_{\{1,2\}}\subseteq V\times V$ 
and $G_{\{3\}}\subseteq V$ there are at least 
\[
	d\,|G_{\{1,2\}}|\,|G_{\{3\}}|-\eta\,|V|^3
\] 
triples $(x, y, z)\in V^3$
such that $(x, y)\in G_{\{1,2\}}$, $z\in G_{\{3\}}$, and $\{x, y, z\}\in E$. The reader may consult~\cite{RRS-b} 
for a systematic discussion of essentially all such density notions arising for $k=3$.  

Definition~\ref{dfn:dense} leads us in the expected way to further generalised 
Tur\'{a}n densities.

\begin{dfn}\label{def:gpi}
Given a $k$-uniform hypergraph $F$ and a set $\ccS\subseteq\powerset([k])$ we put
\begin{multline*}
	\pi_{\ccS}(F)=\sup\bigl\{d\in[0,1]\colon \text{for every $\eta>0$ and $n\in \NN$ 
	there exists}\\
	\text{a $k$-uniform, $F$-free,  $(d,\eta, \ccS)$-dense hypergraph $H$ with 
	$|V(H)|\geq n$}\bigr\}\,.
\end{multline*}
\end{dfn}
As we shall see in Proposition~\ref{prop:technical} below, for symmetrical 
families $\ccS$ of the form $[k]^{(j)}$ the functions 
$\pi_{j}(\cdot)$ and $\pi_{[k]^{(j)}}(\cdot)$ coincide.
Consequently, the following problem generalises Problem~\ref{prob:sym}.

\begin{prob} \label{prob:general}
Determine $\pi_{\ccS}(F)$ for all $k$-uniform hypergraphs $F$ and all 
${\ccS\subseteq\powerset([k])}$.
\end{prob}

However, Problem~\ref{prob:general} does also make sense when $\ccS$ is not 
``symmetrical'' and it seems to us that these most general Tur\'an 
densities have interesting properties. For instance, the inequality 
\[
	 \piee(K_{2^r}^{(3)}) \leq \frac{r-2}{r-1} \leq \piev(K_{r+1}^{(3)})
	 \quad \text{for } r\ge 2\,,
\]
where $\EV=\bigl\{\{1, 2\}, \{3\}\bigr\}$ and $\EE=\bigl\{\{1, 2\}, \{2, 3\}\bigr\}$, 
shows that there is a striking discrepancy between the growth rates of~$\piee(\cdot)$ 
and $\piev(\cdot)$ for $3$-uniform cliques (see~\cite{RRS-c}).

Let us record some easy monotonicity 
properties of these generalised Tur\'{a}n densities, which generalise~\eqref{eq:chain}
and show that it suffices to study $\pi_\ccS(F)$ when $\ccS$
in an antichain.

\begin{prop} \label{prop:piS}
Let $F$ denote some $k$-uniform hypergraph.
\begin{enumerate}[label=\alabel]
\item\label{it:piSa} 
If $\ccS\subseteq\ccT\subseteq\powerset([k])$, then $\pi_{\ccS}(F)\ge \pi_{\ccT}(F)$.
\item\label{it:piSb}
If $\ccT\subseteq\powerset([k])$ and $\ccS\subseteq\ccT$ is the set of those members of $\ccT$ 
that are maximal with respect to inclusion, then $\pi_{\ccS}(F)= \pi_{\ccT}(F)$.
\end{enumerate}
\end{prop}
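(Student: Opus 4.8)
\textbf{Proof proposal for Proposition~\ref{prop:piS}.}

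The plan is to prove both parts directly from Definition~\ref{dfn:dense} and Definition~\ref{def:gpi}, using the simple observation that imposing density with respect to \emph{more} families $\ccG$ is a stronger requirement, while superfluous coordinates in a family $\ccG$ can be ``filled in trivially'' so as not to affect anything.

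For part~\ref{it:piSa}, suppose $\ccS\subseteq\ccT$ and let $H=(V,E)$ be any $k$-uniform hypergraph that is $(d,\eta,\ccT)$-dense. I claim it is also $(d,\eta,\ccS)$-dense. Given an arbitrary family $\ccG=\{G_S\colon S\in\ccS\}$ witnessing a potential failure of $(d,\eta,\ccS)$-density, I extend it to a family $\ccG'=\{G'_T\colon T\in\ccT\}$ by setting $G'_T=G_T$ for $T\in\ccS$ and $G'_T=V^T$ for $T\in\ccT\setminus\ccS$. Since $\cK_k(V^T)=V^k$ for every $T$, the intersection defining $\cK_k(\ccG')$ in~\eqref{eq:711} is unchanged, i.e.\ $\cK_k(\ccG')=\cK_k(\ccG)$ and hence $E_H(\ccG')=E_H(\ccG)$ and $e_H(\ccG')=e_H(\ccG)$. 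Applying $(d,\eta,\ccT)$-density to $\ccG'$ gives $e_H(\ccG)\ge d\,|\cK_k(\ccG)|-\eta\,|V|^k$, as desired. Thus every $(d,\eta,\ccT)$-dense hypergraph is $(d,\eta,\ccS)$-dense, so any $d$ admissible in the supremum defining $\pi_{\ccT}(F)$ is also admissible for $\pi_{\ccS}(F)$, whence $\pi_{\ccS}(F)\ge\pi_{\ccT}(F)$.

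For part~\ref{it:piSb}, the inequality $\pi_{\ccS}(F)\ge\pi_{\ccT}(F)$ is immediate from~\ref{it:piSa} since $\ccS\subseteq\ccT$. For the reverse, it suffices to show that every $(d,\eta,\ccS)$-dense hypergraph $H$ is automatically $(d,\eta,\ccT)$-dense. Given a family $\ccG=\{G_T\colon T\in\ccT\}$, I project each $G_T$ down to the unique maximal $S\in\ccS$ with $T\subseteq S$: more precisely, note that $G_T\subseteq V^T$ gives the cylinder $\cK_k(G_T)=\{\seq v\colon \seq v\,|\,T\in G_T\}$, and if $T\subseteq S$ then $\cK_k(G_T)=\cK_k(G'_S)$ where $G'_S=\{f\in V^S\colon f\,|\,T\in G_T\}$ is the full preimage of $G_T$ under restriction $V^S\to V^T$. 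Grouping the members of $\ccT$ according to which maximal $S\in\ccS$ contains them and intersecting these preimages, one obtains a family $\ccG'=\{G'_S\colon S\in\ccS\}$ with $\cK_k(\ccG')=\cK_k(\ccG)$, hence again $e_H(\ccG')=e_H(\ccG)$. Applying $(d,\eta,\ccS)$-density to $\ccG'$ yields the required bound for $\ccG$, so $H$ is $(d,\eta,\ccT)$-dense and therefore $\pi_{\ccT}(F)\ge\pi_{\ccS}(F)$.

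The argument is entirely routine; the only point requiring a little care is the bookkeeping in~\ref{it:piSb}, namely that each $T\in\ccT$ is contained in \emph{some} maximal member $S\in\ccS$ (true because $\ccT$ is finite and $\ccS$ consists of its $\subseteq$-maximal elements) and that taking full preimages under restriction commutes with intersection, so that the combined family $\ccG'$ indexed by $\ccS$ really does have $\cK_k(\ccG')=\cK_k(\ccG)$. No genuine obstacle arises; there is no extremal or probabilistic input needed here, only the definitions.
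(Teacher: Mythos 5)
Your proof is correct and follows essentially the same route as the paper's: part~\ref{it:piSa} by padding the family with the trivial sets $V^T$ (the paper just asserts this ``a fortiori''), and part~\ref{it:piSb} by pulling each $G_T$ back to a maximal superset in $\ccS$ and intersecting --- the paper performs exactly this pullback one set at a time via induction on $|\ccT\setminus\ccS|$. One cosmetic remark: the maximal $S\in\ccS$ containing a given $T$ need not be \emph{unique}, but, as your own closing parenthetical indicates, any choice of such an $S$ makes the argument go through.
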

\begin{proof}
Part~\ref{it:piSa} follows from the fact that every $(d, \eta, \ccT)$-dense hypergraph
is a fortiori $(d, \eta, \ccS)$-dense. 

For the proof of part~\ref{it:piSb} we note that in view of~\ref{it:piSa} it suffices to show $\pi_{\ccS}(F)\leq \pi_{\ccT}(F)$. Owing  
Definition~\ref{def:gpi} it suffices to show that every 
$(d, \eta, \ccS)$-dense hypergraph $H=(V, E)$ is also $(d, \eta, \ccT)$-dense. 
Proceeding by induction on $|\ccT\setminus\ccS|$ this claim gets reduced to the special 
case where $\ccT=\ccS\cup\{A\}$ and $A\subseteq B\in \ccS$ hold for some sets~$A$ and $B$.
Now let $\ccG_{\ccT}=\{G_T\colon T\in \ccT\}$ be any family with $G_T\subseteq V^T$ 
for all~$T\in\ccT$. The set 
\[
	G'_B=\bigl\{ \seq{v}\in G_B\colon(\seq{v}\,|\,A)\in G_A\bigr\}\subseteq V^B
\]
has the property that $\cK_k(G'_B)=\cK_k(\{G_A, G_B\})$. Thus if we set $G'_S=G_S$ for all 
$S\in \ccS\setminus\{A\}$ and $\ccG_{\ccS}=\{G'_S\colon S\in\ccS\}$, then 
$\cK_k(\ccG_{\ccS})=\cK_k(\ccG_\ccT)$ and, hence, $e_H(\ccG_{\ccT})=e_H(\ccG_{\ccS})$ and 
the $(d, \eta, \ccT)$-denseness of $H$ follows from its $(d, \eta, \ccS)$-denseness.
\end{proof}

We conclude this section with 
the following observation, which for $j=k-2$ will be useful in the proof of Theorem~\ref{thm:main}.

\begin{prop}\label{prop:technical}
If $F$ is a $k$-uniform hypergraph and $j\in [1,k-1]$, then $\pi_{j}(F)=\pi_{[k]^{(j)}}(F)$.
\end{prop}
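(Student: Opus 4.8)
The claim is $\pi_j(F)=\pi_{[k]^{(j)}}(F)$ for $j\in[1,k-1]$, where on the left $H$ is tested against a single $j$-uniform hypergraph $G^{(j)}$ on $V$ (and $\cK_k$ means $k$-subsets spanning a clique $K_k^{(j)}$), while on the right $H$ is tested against families $\ccG=\{G_S:S\in[k]^{(j)}\}$ of \emph{directed} hypergraphs $G_S\subseteq V^S$. The natural route is to prove both inequalities by showing that one density notion implies a version of the other, possibly after a small loss in $\eta$ and a passage from ordered to unordered objects.

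First I would prove $\pi_j(F)\le\pi_{[k]^{(j)}}(F)$. Given a family $\ccG=\{G_S:S\in[k]^{(j)}\}$, I want to build a single (undirected) $j$-uniform hypergraph $G^{(j)}$ on $V$ whose clique collection $\cK_k(G^{(j)})$ approximates $\cK_k(\ccG)$. The obvious candidate: put a $j$-set $\{x_1,\dots,x_j\}$ (with $x_1,\dots,x_j$ distinct) into $G^{(j)}$ iff for \emph{every} way of assigning these $j$ vertices to a $j$-subset $S$ of $[k]$ via an injection, the resulting tuple lies in $G_S$. Then any $k$-set spanning a clique in $G^{(j)}$, read as an ordered $k$-tuple $\seq v$ of distinct coordinates, has $(\seq v\,|\,S)\in G_S$ for all $S$, i.e.\ $\seq v\in\cK_k(\ccG)$; conversely the only tuples in $\cK_k(\ccG)$ not captured this way are those with a repeated coordinate, of which there are $O(|V|^{k-1})$. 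Since $E_H$ only sees the underlying $k$-set, one checks that $e_H$ restricted to the clique tuples of $G^{(j)}$ and the injective tuples of $\cK_k(\ccG)$ differ by $O(|V|^{k-1})=o(|V|^k)$ each, so $(d,\eta,j)$-denseness of $H$ yields $(d,\eta',[k]^{(j)})$-denseness with $\eta'=\eta+o(1)$, and letting $\eta'\to0$ gives the inequality of suprema.

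For the reverse inequality $\pi_{[k]^{(j)}}(F)\le\pi_j(F)$, I would go the other way: given a single $j$-uniform hypergraph $G^{(j)}$ on $V$, define for each $S\in[k]^{(j)}$ the directed set $G_S=\{\seq w\in V^S:\text{the $|{\rm im}\,\seq w|$-set }{\rm im}\,\seq w\in G^{(j)}\text{ and }\seq w\text{ is injective}\}$ — i.e.\ the ``symmetrisation'' of a single undirected hypergraph into a consistent family. Then a tuple $\seq v\in V^k$ lies in $\cK_k(\ccG)$ iff $\seq v$ is injective and every $j$-subset of $\{v_1,\dots,v_k\}$ is an edge of $G^{(j)}$, i.e.\ iff $\{v_1,\dots,v_k\}\in\cK_k(G^{(j)})$ and we count it with the $k!$ orderings; the count of $\cK_k(G^{(j)})$ and of $\cK_k(\ccG)$ thus agree up to the factor $k!$ and an $O(|V|^{k-1})$ error, and similarly for the edge counts. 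Hence $(d,\eta,[k]^{(j)})$-denseness of $H$ transfers to $(d,\eta',j)$-denseness with controlled $\eta'$, giving $\pi_{[k]^{(j)}}(F)\le\pi_j(F)$.

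Combining the two inequalities gives equality. \textbf{Main obstacle.} The only real work is bookkeeping the ordered-versus-unordered discrepancy: one must verify that passing between $k$-subsets and injective $k$-tuples, and between an undirected $j$-hypergraph and a consistent family of directed ones, changes all relevant cardinalities by only $O(|V|^{k-1})$, which is absorbed into $\eta|V|^k$ in the limit. A secondary subtlety is that $[k]^{(j)}$ is symmetrical so the family $\{G_S\}$ built from a single $G^{(j)}$ really is well defined for every $S$ simultaneously — this uses that $j\ge1$ (so the sets $S$ are nonempty) and that we are free to impose the same symmetric condition on every coordinate block. Neither step requires $H$ to satisfy any extra hypothesis, so the argument is purely combinatorial and uniform in $k$.
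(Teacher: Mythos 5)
Your second inequality ($\pi_{[k]^{(j)}}(F)\le\pi_j(F)$, obtained by symmetrising a single $G^{(j)}$ into a family $\{G_S\}$) is fine and coincides with the paper's first observation. The gap is in the other direction, where you pass from a family $\ccG=\{G_S\colon S\in[k]^{(j)}\}$ to a single undirected $G^{(j)}$ by the rule ``$\{x_1,\dots,x_j\}\in G^{(j)}$ iff every injection into every $S$ lands in $G_S$''. With this rule the containment you need goes the wrong way: a tuple $\seq{v}\in\cK_k(\ccG)$ only certifies, for each $S$, one particular placement of the $j$-set $\{v_i\colon i\in S\}$ into the single block $S$, so its $j$-subsets need not be edges of your $G^{(j)}$ at all. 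Concretely, for $k=3$, $j=2$, partition $V=A\dcup B\dcup C$ and take $G_{\{1,2\}}=A\times B$, $G_{\{1,3\}}=A\times C$, $G_{\{2,3\}}=B\times C$; then $\cK_3(\ccG)=A\times B\times C$ has $\Theta(|V|^3)$ elements while your $G^{(2)}$ is empty, so applying $(d,\eta,2)$-denseness to $G^{(2)}$ yields only the vacuous bound $e_H(\ccG)\ge-\eta|V|^3$ instead of the required $e_H(\ccG)\ge d\,|A|\,|B|\,|C|-\eta'|V|^3$. (Reversing the quantifier to ``some injection into some $S$'' overcounts instead: a $k$-set all of whose $j$-subsets have some valid placement need not admit a globally consistent assignment, so edges of $H$ outside $E_H(\ccG)$ get counted.) The paper's remedy for $j\ge2$ is to first pass to a partition $V=V_1\dcup\dots\dcup V_k$ (chosen at random so that the relevant counts drop by a factor close to $k^{-k}$) and to restrict to transversal tuples; then each $j$-subset of a transversal $k$-set determines a unique $S\in[k]^{(j)}$, the symmetrisation becomes exact, and the lost factor is absorbed into~$\eta'$.

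Moreover, for $j=1$ no pointwise implication of the form ``$(d,\eta,1)$-dense implies $(d,\eta',[k]^{(1)})$-dense'' can hold, contrary to your closing claim that neither step needs anything beyond bookkeeping. For $k=2$ the disjoint union of two cliques on $n/2$ vertices is $(\tfrac12,\eta,1)$-dense, yet taking $U_1$, $U_2$ to be the two parts produces no crossing edges at all; your construction would give $G^{(1)}=U_1\cap U_2=\emptyset$ and again only a vacuous bound, while the desired conclusion is simply false for this $H$. This is why the paper treats $j=1$ separately and uses the weak hypergraph regularity lemma to pass to an induced subhypergraph $H[U]$ that is $(d,\eta',[k]^{(1)})$-dense --- enough to compare the suprema, but a genuinely different and unavoidable argument.
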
 
The curious reader may wonder what happens for the case $j=0$ and, in fact, the proposition
also holds in this somewhat peculiar case. 
\begin{proof}
	First we observe that $\pi_{j}(F)\geq \pi_{[k]^{(j)}}(F)$ for all $k>j\geq0$ and 
	every $k$-uniform hypergraph~$F$. This follows from the observation that 
	every $(d,\eta,[k]^{(j)})$-dense 
	$k$-uniform hypergraph $H=(V,E)$ is also $(d,\eta/k!,j)$-dense. 
	
	Indeed, to see this we
	consider a 
	$j$-uniform hypergraph~$G^{(j)}$ with vertex set~$V$. We shall apply the 
	$(d,\eta,[k]^{(j)})$-denseness of $H$
	to the family~$\ccG$ consisting for every $J\in[k]^{(j)}$ 
	of a ``directed'' copy $G_J$ of $G^{(j)}$, i.e., $(v_i)_{i\in J}\in G_J$ if $\{v_i\colon i\in J\}\in G^{(j)}$.
	Recall, that~$\cK_k(G^{(j)})$ contains all $k$-element subsets of $V$ that span a clique 
	in $G^{(j)}$. On the other hand, $\cK_k(\ccG)$ contains every ordered $k$-tuple 
	$\seq{v}=(v_1,\dots,v_k)\in V^k$ such that for every $J\in[k]^{(j)}$ the projection $(\seq{v}\,|\,J)$
	is in $G_J$, which by definition means $\{v_i\colon i\in J\}\in G^{(j)}$. 
	Consequently, every $k$-element set from $\cK_k(G^{(j)})$ appears in all $k!$ orderings in 
	$\cK_k(\ccG)$ and every ordered $k$-tuple from $\cK_k(\ccG)$ appears unordered in $\cK_k(G^{(j)})$.
	This yields 
	\[
		|\cK_k(G^{(j)})|=\frac{|\cK_k(\ccG)|}{k!}
	\]
	and, similarly, we have 
	\[
		|\cK_k(G^{(j)})\cap E|=\frac{e_H(\ccG)}{k!}\,,
	\]
	which implies 
	\[
 		|\cK_k(G^{(j)})\cap E|
		=
		\frac{e_H(\ccG)}{k!}
		\geq
		\frac{1}{k!}(d\,|\cK_k(\ccG)|-\eta n^k)
		=
		d\,|\cK_k(G^{(j)})|-\frac{\eta}{k!} n^k
	\]
	and the observation follows.

	For the opposite inequality
	\begin{equation}\label{eq:ptech}
		\pi_{j}(F)\leq \pi_{[k]^{(j)}}(F)
	\end{equation}
	we distinguish the cases $j=1$ and $j\geq 2$.

	Perhaps somewhat surprisingly the proof for $j\geq 2$ seems to be simpler 
	than the case $j=1$ and we give it first. In fact, one can again prove (as above) that 
	Definitions~\ref{dfn:jdense} and~\ref{dfn:dense} coincide up to a 
	different value of~$\eta$. More precisely, sufficiently for large $n=|V|$ we show:
	\begin{enumerate}[label=\rmlabel]
		\item\label{it:ptechi} If $j\geq 2$ and $H=(V,E)$ is a $(d,\eta,j)$-dense $k$-uniform hypergraph, 
			then it is 
			also $(d,\eta',[k]^{(j)})$-dense for $\eta'=2k^k\eta$.
	\end{enumerate} 
	For $j=1$ we have to pass to an induced subhypergraph to show a similar assertion.
	\begin{enumerate}[label=\rmlabel]
		\setcounter{enumi}{1}
		\item\label{it:ptechii} Let $j=1$. 
			For every $\eta'>0$ there exists $\eta>0$ so that for every $d>0$ and every
			sufficiently large $(d,\eta,1)$-dense $k$-uniform hypergraph $H=(V,E)$ there 
			exists a subset $U\subseteq V$ of size at least $\eta |V|$ so that the 
			induced subhypergraph $H[U]$ is $(d,\eta',[k]^{(1)})$-dense.
	\end{enumerate}
	\noindent
	\textit{Proof of~\ref{it:ptechi}}.
	We assume by contradiction that there is a system of oriented 
	$j$-uniform hypergraphs $\ccG=\{G_J\colon J\in[k]^{(j)}\}$ with $G_J\subseteq V^J$ such that 
	\begin{equation}\label{eq:ptech-contra}
		e_H(\ccG) <  d\,|\cK_k(\ccG)|-\eta' |V|^k\,.
	\end{equation}
	We consider a random partition $\cP$ of $V=V_1\dcup\dots\dcup V_k$, where each vertex $v\in V$ is included in 
	any $V_i$ independently with probability $1/k$ and set
	\[
		\cK_k^\cP(\ccG)=(V_1\times \dots\times V_k)\cap\cK_k(\ccG)
		\qqand
		E_H^\cP(\ccG)=(V_1\times \dots\times V_k)\cap E_H(\ccG)\,.
	\]
	Using sharp concentration inequalities one can show that with probability tending to~$1$ 
	(as~$n=|V|\to\infty$) we have
	\[
		\big|\cK_k^\cP(\ccG)\big|=(1+o(1))\frac{1}{k^k}\big|\cK_k(\ccG)\big|
		\qqand
		\big|E_H^\cP(\ccG)\big|=(1+o(1))\frac{1}{k^k}e_H(\ccG)\,.
	\]
	Thus, in view of~\eqref{eq:ptech-contra} we infer that there is a partition $\cP$
	such that
	\begin{equation}\label{eq:ptech-contra2}
		\big|E_H^\cP(\ccG)\big| <  d\,\big|\cK_k^\cP(\ccG)\big|-\frac{\eta'}{2k^k} |V|^k
		=d\,\big|\cK_k^\cP(\ccG)\big|-\eta |V|^k\,.
	\end{equation}
	Note that $\cK_k^\cP(\ccG)$ consists of all $k$-tuples 
	$\seq{v}=(v_1,\dots,v_k)\in V_1\times\dots\times V_k\subseteq V^k$ such that 
	\[
		(\seq{v}\,|\,J)\in G_{J}\,.
	\]
	Now we define the $j$-uniform (undirected) hypergraph $G^{(j)}$ on $V$ with edge set
	\[
		\bigdcup_{J\in[k]^{(j)}}\Big\{\{v_i\colon i\in J\}\colon 
		(v_i)_{i\in J}\in G_{J}\cap\prod_{i\in J}V_i \Big\}\,.
	\]
	Since $j\geq 2$, there is a one-to-one correspondence between the $k$-element sets in
	$\cK_k(G^{(j)})$ and the (ordered) $k$-tuples in  $\cK_k^{\cP}(\ccG)$. In fact, we have
	\begin{equation}\label{eq:ptech-clique}
		\cK_k(G^{(j)})=\big\{\{v_1,\dots,v_k\}\colon (v_1,\dots,v_k)\in\cK_k^{\cP}(\ccG)\big\}\,.
	\end{equation}
	(Note that this identity does not hold for $j=1$, since in that case $G^{(1)}$
	and in $\cK_k(G^{(1)})$ additionally those cliques arise which have more than one vertex in some 
	of the vertex classes $V_i$.)
	In view of~\eqref{eq:ptech-contra2} we infer from~\eqref{eq:ptech-clique} for $j\geq 2$ that
	the hypergraph $H$ is not $(d,\eta,j)$-dense for, which concludes the proof of assertion~\ref{it:ptechi}.

	\noindent
	\textit{Proof of~\ref{it:ptechii}}.
	The proof of assertion~\ref{it:ptechii} (for the case $j=1$) relies on a somewhat standard application
	of the so-called \emph{weak 
	hypergraph 	regularity lemma}, which is the straightforward extension of Szemer\'edi's 
	regularity lemma~\cite{Sz} from graphs to hypergraphs. We sketch this proof below.
	
	Given $\eta'>0$ we shall apply the weak hypergraph regularity lemma with $\eps>0$
	and a lower bound on the number of vertex classes $t_0$ and fix an auxiliary constant $\l$ 
	such that
	\[
		1/k,\eta'\gg 1/\l \gg \eps \gg 1/t_0\,.
	\]
	The weak hypergraph regularity lemma yields an upper bound $T_0=T_0(\eps,t_0)$
	on the number of vertex classes in the regular partition and we take
	\[
		\eta\ll 1/T_0\,.
	\]
	Let $n$ be sufficiently large and let $H=(V,E)$ be a $(d,\eta,1)$-dense $k$-uniform hypergraph 
	on $n=|V|$ vertices. The weak hypergraph regularity lemma applied to $H$ yields a partition 
	$V_1\dcup\dots\dcup V_t=V$ with $t_0\leq t\leq T_0$ such that all but at most $\eps t^k$ of the 
	$k$-tuples $K=\{i_1,\dots,i_k\}\in[t]^{(k)}$ the family $(V_{i})_{i\in K}$ are $\eps$-regular, i.e., they satisfy
	\begin{equation}\label{eq:wreg}
		e_H(W_{i_1},\dots,W_{i_k})
		=
		d_K\,|W_{i_1}|\cdot\ldots\cdot|W_{i_k}|\pm\eps|V_{i_1}|\cdot\ldots\cdot|V_{i_k}|
	\end{equation}
	for all $W_{i_1}\subseteq V_{i_1},\dots,W_{i_k}\subseteq V_{i_k}$, where $d_K$
	denotes the density of induced $k$-partite subhypergraph of $H$ on $V_{i_1}\dcup \dots\dcup V_{i_k}$.
	
	As in many proofs utilising the 
	\emph{regularity method} we successively apply Tur\'an and Ramsey-type arguments to obtain a subset 
	$L\subseteq [t]$ of size at least $|L|=\l$ such that for every $k$-element subset 
	$K\in L^{(k)}$ the associated $k$-tuple $(V_i)_{i\in K}$ is $\eps$-regular with density $d_K$ such that 
	\begin{equation}\label{eq:ptech-Ramsey}
		\text{either $d_K\geq d-\eta'/2$ for every $K\in L^{(k)}$ or $d_K<d-\eta'/2$ for every~$K\in L^{(k)}$.}
	\end{equation}
	We shall rule out the latter case by appealing to the $(d,\eta,1)$-denseness of~$H$.
	Applying the $(d,\eta,1)$-denseness to $U=\bigcup_{i\in L} V_i$ yields
	\begin{equation}\label{eq:ptech-lb}
		e_H(U)\geq d\binom{|U|}{k}-\eta n^k\,.
	\end{equation}
	Since at most 
	\begin{equation}\label{eq:712}
		\l\binom{n/t}{2}|U|^{k-2}<\frac{n}{t}|U|^{k-1}=\l^{k-1}\left(\frac{n}{t}\right)^{k}
	\end{equation}
	of the edges of $H[U]$ can 
	intersect some vertex class $V_i$ in more than one vertex, 
	there must be some $K_0\in L^{(k)}$ such that
	\[
		d_{K_0} \geq \frac{e_H(U)-\l^{k-1}(n/t)^{k}}{\binom{\l}{k}(n/t)^{k}}
		\overset{\eqref{eq:ptech-lb}}{\geq} 
		\frac{d\binom{|U|}{k}-\eta n^k-\l^{k-1}(n/t)^{k}}{\binom{\l}{k}(n/t)^{k}}
		\geq
		d-\frac{\eta n^k}{\binom{\l}{k}(n/t)^{k}}-\frac{k^k}{\l}\,.
	\]
	Moreover, since 
	$1/\eta'\ll t_0\leq  t\leq T_0\ll 1/\eta$ we have $\frac{\eta n^k}{\binom{\l}{k}(n/t)^k}\leq \eta'/4$ and 
	from $\l\gg1/\eta'$ we infer $k^k/\l\leq \eta'/4$.
	Consequently, 
	\[
		d_{K_0}\geq d-\frac{\eta'}{2}
	\] 
	and, hence, it follows from~\eqref{eq:ptech-Ramsey} that 
	$d_K\geq d-\eta'/2$ for every $K\in L^{(k)}$.
	
	Since by our choice of constants we also have
	\[
		|U|\geq \frac{\l}{T_0}n\gg \eta n
	\]
	we conclude the proof by showing that the induced hypergraph $H[U]$ is $(d,\eta',[k]^{(1)})$-dense. 
	Roughly speaking, this will be inherited from the $\eps$-regularity of the 
	families~$(V_i)_{i\in K}$ with density $d_K\geq d-\eta'/2$ for all $K\in L^{(k)}$. 
	
	More formally, let $\ccG=\{U_1,\dots,U_k\}$ be an arbitrary family of subsets of~$U$ (which take the r\^ole of the 
	hypergraphs $G_{\{i\}}\subseteq V^{\{i\}}$ in the definition of $(d,\eta',[k]^{(1)})$-denseness).
	Note that in view of~\eqref{eq:711} we have $\cK_k(\ccG)\subseteq V^k$ and
	\[
		|\cK_k(\ccG)|
		=
		|U_1|\cdot\ldots\cdot|U_k|
		=
		\sum_{(i_1,\dots,i_k)\in L^k}|U_1\cap V_{i_1}|\cdot\ldots\cdot|U_k\cap V_{i_k}|\,.
	\]
	Moving to (unordered) $k$-element subsets of $L$ we obtain by similar calculations as in~\eqref{eq:712}
	that
	\[
		|\cK_k(\ccG)| 
		\leq 
		\sum_{K\in L^{(k)}}\sum_{\tau}|U_{\tau(1)}\cap V_{i_1}|\cdot\ldots\cdot|U_{\tau(k)}\cap V_{i_k}|
		+k!\l^{k-1}(n/t)^k\,,
	\]
	where the inner sum runs over all permutations $\tau$ on $[k]$.
	Applying the $\eps$-regularity (see~\eqref{eq:wreg}) to every family 
	$(U_{\tau(1)}\cap V_{i_1},\dots,U_{\tau(k)}\cap V_{i_k})$ yields
	\[
		e_H(\ccG)
		\geq 
		\left(d-\frac{\eta'}{2}\right)\left(|\cK_k(\ccG)|-k!\l^{k-1}\left(\frac{n}{t}\right)^k\right)
		-\binom{\l}{k}k!\cdot\eps\left(\frac{n}{t}\right)^k
		\geq d\,|\cK_k(\ccG)| -\eta' |U|^k\,,
	\]
	since our choice of constants ensures $\eta'\gg1/\l\gg\eps$. 
	Finally, since $\ccG=\{U_1,\dots, U_k\}$ was an arbitrary family  of subsets, 
	this shows that $H[U]$ is $(d,\eta',[k]^{(1)})$-dense as claimed.
\end{proof}
In view of Proposition~\ref{prop:technical} (applied with $j=k-2$ and $F=F^{(k)}$)
combined with~\eqref{eq:low}, the proof of Theorem~\ref{thm:main} reduces to the 
following proposition.
\begin{prop}\label{prop:main}
	For every $k\geq 2$ we have $\pi_{[k]^{(k-2)}}(F^{(k)})\leq 2^{1-k}$,
	i.e., for every $\eps>0$ there exist $\eta>0$ and $n_0\in\NN$
	such that every $\bigl( 2^{1-k}+\eps, \eta, [k]^{(k-2)}\bigr)$-dense
	$k$-uniform hypergraph~$H$ on $n\geq n_0$ vertices 
	contains three edges on $k+1$ vertices.
\end{prop}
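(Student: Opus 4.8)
The plan is to prove Proposition~\ref{prop:main} by the hypergraph regularity method, converting it into a purely combinatorial assertion about reduced hypergraphs — this is Proposition~\ref{prop:reduced} — and then proving that assertion by a two-stage graph-theoretic argument. First I would apply the regularity lemma for $k$-uniform hypergraphs to $H$: this yields a partition of $V(H)$ into classes, regular partitions of the crossing $j$-tuples for every $j<k$, and hence a reduced hypergraph $\cA$ recording, for each $(k-1)$-dimensional polyad supported on $k$ of the classes, whether $H$ has relative density above or below $2^{1-k}$ on the $k$-sets it supports. The reason for phrasing the hypothesis as $[k]^{(k-2)}$-denseness rather than plain $(k-2)$-denseness — and hence for invoking Proposition~\ref{prop:technical} — is precisely that this form survives regularization cleanly: the families $\ccG=\{G_S\colon S\in[k]^{(k-2)}\}$ of Definition~\ref{dfn:dense} have one slot for each of the $\binom{k}{2}$ coordinate $(k-2)$-subsets of $[k]$, which matches the $\binom{k}{2}$ many $(k-2)$-dimensional layers of a polyad; choosing each $G_S$ to be a union of blocks of the $(k-2)$-th layer of the regular partition, one reads off that $\cA$ inherits a density condition, roughly that in every ``sub-selection'' of $\cA$ at least a $2^{1-k}+\eps/2$ fraction of the available $k$-cliques are edges of $\cA$.

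Once $\cA$ has this property, the embedding (counting) lemma of the regularity method reduces the task to finding inside $\cA$ a configuration that looks like $F^{(k)}$ in the reduced sense: $k+1$ classes carrying mutually compatible polyads whose three $k$-subsets through a common $(k-2)$-set are all edges of $\cA$. Pulling such a configuration back through the regularity partition then gives an honest copy of $F^{(k)}$ in $H$. So it remains to show that any reduced hypergraph with the above density property contains an $F^{(k)}$-configuration, and this is the combinatorial heart of the proof — the only point where $d>2^{1-k}$ is used in an essential, tight way. The threshold $2^{1-k}$ should be understood through the higher-order tournament of Example~\ref{ex:tourn3}: the reduced hypergraphs it produces have ``sub-selection density'' exactly $2^{1-k}$ and are $F^{(k)}$-free by Fact~\ref{fact:Fk-free}, so the argument has to extract an $F^{(k)}$ out of the $\eps$ of excess with essentially no slack.

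To do this I would isolate the $(k-2)$-set $w$ of an $F^{(k)}$ and treat it as a common stem: having fixed $k-2$ classes and a polyad on them, the three edges $w\cup\{a,b\}$, $w\cup\{a,c\}$, $w\cup\{b,c\}$ correspond to a triangle on $\{a,b,c\}$ in an auxiliary graph on the remaining classes, in which a pair is joined exactly when the stem extends to an edge of $\cA$ along it. The reduction to graph theory then splits into a \emph{path lemma} — which uses the density of $\cA$ to locate a stem of $k-2$ classes whose associated auxiliary graph is dense, building the stem up one class at a time while keeping enough extensions ``live,'' a statement of path type in the appropriate auxiliary structure — and a \emph{triangle lemma} — which shows that the resulting dense auxiliary graph, which still carries the residual tournament-type orientation data inherited from the lower-order regularity, nevertheless must contain a triangle. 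These are carried out in Sections~\ref{sec:path} and~\ref{sec:triangle}.

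I expect the triangle lemma together with the path lemma — the post-regularization combinatorics — to be the main obstacle. For the $k=3$ case this step is handled in~\cite{RRS-a} by an argument that, as remarked above, extends to $k=4$ but not beyond, so the real challenge is to find a proof of the path and triangle lemmas that is uniform in $k$ and, because the higher-order tournament construction is extremal, loses nothing against it. A secondary but nontrivial technical burden is the bookkeeping intrinsic to the $k$-uniform regularity method: maintaining a consistent hierarchy of constants through the regularization step and matching polyads across the three $k$-subsets meeting in the stem $w$ so that the embedding lemma can be applied to all three edges of $F^{(k)}$ simultaneously.
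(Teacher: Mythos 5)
Your high-level architecture coincides with the paper's: regularise $H$, encode the dense regular polyads in a reduced hypergraph $\cA$, transfer the $[k]^{(k-2)}$-density of $H$ to a density condition on the constituents of $\cA$, and reduce the combinatorial core to a ``path lemma'' plus a ``triangle lemma,'' finishing with the embedding lemma. That part is sound and is exactly how Sections~3--5 are organised.

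However, the two graph-theoretic lemmas are not merely ``the main obstacle'' you defer --- as you describe them they are either unformulated or false, and the missing formulation is the actual content of the proof. First, a density of $2^{1-k}+\eps$ on a constituent translates, after projecting onto consecutive vertex classes, only into edge densities around $\tfrac12$ for the bipartite projections, and a multipartite graph on a \emph{bounded} number of parts with all pairwise densities $\tfrac12+\xi$ need not contain a triangle (triangle-free tripartite graphs exist with all three densities up to roughly $0.618$). So ``the resulting dense auxiliary graph must contain a triangle'' is not a usable lemma. What the paper needs instead is a second-moment notion: a bipartite graph $(X,Y)$ is \emph{$\xi$-rich} if many $y\in Y$ start more than $\bl\tfrac14+\xi\br|X|\,|Y|$ two-edge walks; the path lemma is then a Cauchy--Schwarz induction showing that if all $k-1$ consecutive projections of a constituent were $\xi$-poor, the constituent could not have density $2^{1-k}+\eps$; and the triangle lemma requires $m\gg\xi^{-1}$ parts that are \emph{pairwise} $\xi$-rich, its proof being a separate iterated-selection and double-counting argument. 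Second, your plan for assembling the rich projections (``building the stem up one class at a time'') does not address the key difficulty that the path lemma only guarantees \emph{some} rich consecutive position $r=h(y)\in[k-1]$ for each constituent $y$, and these positions vary with $y$; the paper resolves this with a Ramsey step making $h$ constant on a large index set, after which the stem $z^-$ is chosen so that every pair $\{t_1,t_2\}$ of free indices occupies exactly the positions $r,r+1$. Without the walk-counting definition of richness, the Cauchy--Schwarz induction, the many-parts hypothesis in the triangle lemma, and the Ramsey uniformisation of the rich position, the proposal does not yet constitute a proof.
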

The proof of Proposition~\ref{prop:main} is based on the regularity method for hypergraphs, which we introduce in Section~\ref{sec:reduced}.

\section{The hypergraph regularity method}
\label{sec:reduced}
Similar as in the related work~\cites{RRS-a,RRS-b,RRS-c,RRS-d} the regularity method for hypergraphs 
(developed for $k$-uniform hypergraphs in~\cites{Go07,NRS06,RS04}) plays a central r\^ole in the proof of Proposition~\ref{prop:main}. For the intended application we shall utilise the \emph{hypergraph regularity lemma}
and an accompanying \emph{counting/embedding lemma} (see Theorems~\ref{thm:RL} and~\ref{thm:EL} below).
 We follow the approach from~\cites{RSch07RL,RSch07CL} and introduce the necessary notation below.

\subsection{Regular complexes} For a $(j-1)$-uniform hypergraph $P^{(j-1)}$ and a $j$-uniform hypergraph 
$P^{(j)}$ we define the 
\emph{relative density $d(P^{(j)}\,|\,P^{(j-1)})$} of~$P^{(j)}$ 
with respect to~$P^{(j-1)}$ by
\[
	d\big(P^{(j)}\,|\,P^{(j-1)}\big)
	=
	\frac{\big|\cK_{j}(P^{(j-1)})\cap E(P^{(j)})\big|}{\big|\cK_{j}(P^{(j-1)})\big|}
\]
and for definiteness we set $d(P^{(j)}\,|\,P^{(j-1)})=0$ in case $\cK_{j}(P^{(j-1)})=\emptyset$.

As usual we say a bipartite graph $P^{(2)}$ with vertex partition $V_1\dcup V_2$ is
$(\delta,d_2)$-regular, if for all subsets $U_1\subseteq V_1$ and $U_2\subseteq V_2$
we have
\[
	\big|e(U_1,U_2)-d_2|U_1||U_2|\big|
	\leq
	\delta |V_1||V_2|\,.
\]
This definition is extended for $j$-uniform hypergraphs for $j\geq 3$ as follows.
For $j\geq 3$, $d_j\geq 0$ and $\delta>0$ we say a $j$-partite $j$-uniform 
hypergraph $P^{(j)}$ is \emph{$(\delta,d_j)$-regular} w.r.t.\ a $j$-partite $(j-1)$-uniform hypergraph $P^{(j-1)}$ on the same vertex partition, if for 
every subhypergraph $Q\subseteq P^{(j-1)}$ we have
\begin{equation}\label{eq:hreg1}
	\Big|\big|E(P^{(j)})\cap \cK_{j}(Q)\big|-d_j\big|\cK_j(Q)\big|\Big|
	\leq 
	\delta\big|\cK_j(P^{(j-1)})\big|\,.
\end{equation}
In other words, $P^{(j)}$ is regular w.r.t.~$P^{(j-1)}$ if the relative densities 
$d(P^{(j)}\,|\,Q)$ are all approximately the same for all subhypergraphs 
$Q\subseteq P^{(j-1)}$ spanning many $j$-cliques.

Moreover, if $P^{(j-1)}$ and $P^{(j)}$ are $\l$-partite on the same vertex partition $V_1\dcup\dots\dcup V_{\l}$
then we say $P^{(j)}$ is $(\delta,d_j)$-regular w.r.t.~$P^{(j-1)}$ if
$P^{(j)}[V_{i_1},\dots,V_{i_j}]$ is $(\delta,d_j)$-regular w.r.t.\ $P^{(j-1)}[V_{i_1},\dots,V_{i_j}]$
for all $\binom{\l}{j}$ naturally induced $j$-partite subhypergraphs. We shall consider families
$(P^{(2)},\dots,P^{(k-1)})$ of hypergraphs of uniformities $j=2,\dots,k-1$
with $P^{(j)}$ being regular w.r.t.~$P^{(j-1)}$, which leads to the concept of a regular 
complex.

\begin{dfn}[regular complex]
	\label{def:complex}
	We say a family of hypergraphs $\bP=(P^{(2)},\dots,P^{(k-1)})$ is a \emph{$(k-1,\l)$-complex} 
	with vertex partition $V_1\dcup\dots\dcup V_{\l}$ if 
	\begin{enumerate}[label=\rmlabel]
	\item $P^{(j)}$ is an $\l$-partite $j$-uniform hypergraph with vertex partition $V_1\dcup\dots\dcup V_{\l}$
		for every $j=2,\dots,k-1$ and 
	\item $P^{(j)}\subseteq \cK_j(P^{(j-1)})$ for every $j=3,\dots,k-1$.
	\end{enumerate}
	Such a  complex is \emph{$(\delta,\bd)$-regular} for  $\delta>0$ and 
	$\bd=(d_2,\dots,d_{k-1})\in \RR^{k-2}_{\geq 0}$, if in addition
	\begin{enumerate}[label=\rmlabel]
	\setcounter{enumi}{2}
	\item $P^{(2)}$ is $(\delta,d_2)$-regular and 
		$P^{(j)}$ is $(\delta,d_j)$-regular w.r.t.\ $P^{(j-1)}$ for $j=3,\dots,k-1$.
	\end{enumerate}
\end{dfn}

Similarly, as Szemer\'edi's regularity lemma breaks the vertex set of a large graph 
into classes such that most of the bipartite subgraphs induced between the classes are $\eps$-regular,
the regularity lemma for $k$-uniform hypergraphs breaks $V^{(k-1)}$  for a hypergraph $H=(V,E)$
into $(k-1,k-1)$-complexes which are regular themselves and $H$ will be regular on 
most ``naturally induced''  $(k-1,k)$-complexes from that partition. We now describe the 
structure of this underlying auxiliary partition in more detail.

\subsection{Equitable partitions}
The regularity lemma for $k$-uniform hypergraphs provides a
well-structured family of partitions $\bcP=(\cP^{(1)},\dots,\cP^{(k-1)})$
of vertices, pairs, $\dots$, and $(k-1)$-tuples of the vertex set.
We now discuss the structure of these partitions inductively.  Here
the partition classes of $\cP^{(j)}$ will be $j$-uniform
$j$-partite hypergraphs.

Let $V_1\dcup\dots\dcup V_{t_1}=V$ be a partition of some vertex set~$V$
and set $\cP^{(1)}=\{V_1,\dots,V_{t_1}\}$. 
For any $1\leq j\leq t_1$ we consider the $j$-sets $J\in V^{(j)}$
with $|J\cap V_i|\leq 1$ for every $V_i\in\cP^{(1)}$ and due to its similarity to~\eqref{eq:711}
we denote the set of these $j$-sets by~$\cK_j(\cP^{(1)})$, i.e., 
\[
	\cK_j(\cP^{(1)})=\big\{J\in V^{(j)}\colon |J\cap V_i|\leq 1\ \text{for all}\ V_i\in\cP^{(1)}\big\}\,.
\]
Suppose for each $1\leq
i\leq j-1$ partitions $\cP^{(i)}$ of $\cK_i(\cP^{(1)})$ into 
$i$-uniform $i$-partite hypergraphs are given. Then for
every $(j-1)$-set~$J'\in\cK_{j-1}(\cP^{(1)})$, there exists 
a unique $(j-1)$-uniform $(j-1)$-partite hypergraph $P^{(j-1)}_{J'}\in\cP^{(j-1)}$
with $J'\in E(P_{J'}^{(j-1)})$. 
Moreover, for every $J\in\cK_j(\cP^{(1)})$
we define the \emph{polyad} of $J$ by
\[
	\hP_J^{(j-1)}=\bigcup\big\{P^{(j-1)}_{J'}\colon  J'\in J^{(j-1)}\big\}\,.
\]
In other words, $\hP_J^{(j-1)}$ is the unique set of~$j$
partition classes of $\cP^{(j-1)}$ each containing precisely one $(j-1)$-element subset of $J$.
We view $\hP_J^{(j-1)}$ as $j$-partite $(j-1)$-uniform hypergraph with vertex classes $V_i\in\cP^{(1)}$ such that $|V_i\cap J|=1$ and edge set $\bigcup_{J'\in J^{(j-1)}}E(P^{(j-1)}_{J'})$. In general, we shall use the hat-accent 
`$\,\hat\ \,$' for hypergraphs arising from the partition which have more vertex classes than their uniformity
requires. By definition 
we have
\[
	J\in\cK_j(\hP_J^{(j-1)})\,.
\]
More generally, for every $i$ with $1\leq i< j$, we set
\begin{equation}\label{eq:polyad2}
  \hP^{(i)}_J=\bigcup\big\{\cP^{(i)}_I\colon I\in J^{(i)}\big\}\,.
\end{equation}
This allows us for every $J\in\cK_j(\cP^{(1)})$ to consider the 
$(j-1,j)$-complex (see Definition~\ref{def:complex})
\begin{equation}\label{eq:polyad-complex}
	\bhP_J^{(j-1)}=\big(\hP^{(2)}_J,\dots,\hP^{(j-1)}_J\big)\,,
\end{equation}
which ``supports''~$J$.
Consider the family of all polyads
\[
	\hcP^{(j-1)}=\big\{\hP_J^{(j-1)}\colon J\in\cK_j(\cP^{(1)})\big\}\,.
\]
and observe that 
$\{\cK_j(\hP^{(j-1)})\colon \hP^{(j-1)}\in\hcP^{(j-1)}\}$ is a
partition of $\cK_j(\cP^{(1)})$. The structural requirement  
on the partition $\cP^{(j)}$ of $\cK_j(\cP^{(1)})$ is that
\begin{equation}\label{eq:Pj}
  \cP^{(j)}\prec\{\cK_j(\hP^{(j-1)})\colon \hP^{(j-1)}\in\hcP^{(j-1)}\}\,,
\end{equation}
where $\prec$ denotes the refinement relation of set partitions.
This way 
we require that the set of cliques spanned by any polyad in
$\hcP^{(j-1)}$ is subpartitioned in~$\cP^{(j)}$ and every 
partition class of~$\cP^{(j)}$ belongs to precisely one polyad
in~$\hcP^{(j-1)}$, i.e., for every $j$-uniform $j$-partite hypergraph 
$P^{(j)}\in \cP^{(j)}$ there is a unique polyad $\hP^{(j-1)}\in\hcP^{(j-1)}$ with 
$P^{(j)}\subseteq \cK_j(\hP^{(j-1)})$.
Also~\eqref{eq:Pj} implies (inductively)  
that~$\bhP_J^{(j-1)}$ defined in~\eqref{eq:polyad-complex} is indeed a 
$(j,j-1)$-complex.

The hypergraph regularity lemma also provides such a family of partitions with 
the additional property that the number of hypergraphs that partition the cliques of 
a given polyad is independent of the polyad. This leads to the following notion of 
a family of partitions.
\begin{dfn}[family of partitions]
	\label{def:partition}
	Suppose $V$ is a set of vertices and~$\bt=(t_1,\dots,t_{k-1})$ is a vector of positive integers. 
  	We say $\bcP=\bcP(k-1,\bt)=(\cP^{(1)},\dots,\cP^{(k-1)})$
  	is a \emph{family of partitions on~$V$} if 
  \begin{enumerate}[label=\rmlabel]
  \item $\cP^{(1)}$ is a partition $V_1\dcup\dots\dcup V_{t_1}=V$ with $t_1$ classes and
  \item for $j=2,\dots,k-1$  we have that $\cP^{(j)}$ is a partition of~$\cK_j(\cP^{(1)})$ 
  	satisfying~\eqref{eq:Pj}
    and  
    \begin{equation}\label{eq:equit2}
    	\big|\big\{P^{(j)}\in\cP^{(j)}\colon P^{(j)}\subseteq \cK_j(\hP^{(j-1)})\big\}\big|=t_j
    \end{equation}
    for every $\hP^{(j-1)}\in\hcP^{(j-1)}$. 
  \end{enumerate}
  Moreover, we say $\bcP=\bcP(k-1,\bt)$ is $T_0$-bounded, if
  $\max\{t_1,\dots,t_{k-1}\}\leq T_0$.
\end{dfn}

In addition to these structural properties the hypergraph regularity lemma 
provides a family of partitions such that all the complexes ``build by blocks of the partition''
are regular. This is rendered by the following definition.
\begin{dfn}[equitable family of partitions]
  \label{def:equitable} 
  Suppose $V$ is a set of vertices,~$\mu>0$, and
  $\delta>0$.
  We say a family of partitions $\bcP=\bcP(k-1,\bt)$ on~$V$  is 
  \emph{$(\mu,\delta)$-equitable}  if
  \begin{enumerate}[label=\alabel]
  \item\label{it:33a} $\big|V^{(k)}\setminus\cK_k(\cP^{(1)})\big|\leq \mu |V|^k$,
  \item\label{it:33b} $\cP^{(1)}=\{V_i\colon i\in[t_1]\}$ satisfies
    $|V_1|\leq\dots\leq|V_{t_1}|\leq|V_1|+1$, 
  \item\label{it:33c} for all $K\in\cK_k(\cP^{(1)})$ the complex $\bhP_K$
        (see~\eqref{eq:polyad-complex}) is a $(\delta,\bd)$-regular 
        $(k,k-1)$-complex for $\bd=(1/t_2,\dots,1/t_{k-1})$, and
  \item\label{it:33d} 
		for every $j\in [k-1]$ and for every $K\in \cK_k(\cP^{(1)})$ we have
				\[
		(1-\mu)\prod_{i=1}^j \left(\frac{1}{t_i}\right)^{\binom{k}{i}} n^k\le
		|\cK_k(\hat{P}^{(j)}_K)|\le 
		(1+\mu)\prod_{i=1}^j \left(\frac{1}{t_i}\right)^{\binom{k}{i}} n^k\,.
		\]
		  \end{enumerate}
\end{dfn}
This concludes the discussion of the auxiliary underlying structure provided by the hypergraph regularity lemma.

\subsection{Regularity lemma and embedding lemma} It is left to describe the regular properties the given $k$-uniform hypergraph $H=(V,E)$ may have with respect to the partition. Roughly speaking, $H$ will be regular
for most polyads $\hP^{(k-1)}\in\hcP^{(k-1)}$. However, for the intended application of the embedding lemma (see Theorem~\ref{thm:EL} below) we will need a refined version of the notion defined 
in~\eqref{eq:hreg1}.
\begin{dfn}[$(\delta_k,d,r)$-regular]
	\label{def:hregr}
	Let $\delta_k>0$, $d\geq 0$, and $r\in\NN$.
	We say a $k$-uniform hypergraph $H=(V,E)$ is \emph{$(\delta_k,d,r)$-regular} 
	w.r.t.~a $k$-partite $(k-1)$-uniform hypergraph~$\hP^{(k-1)}$ with $V(\hP^{(k-1)})\subseteq V$ if 
	for every collection $(Q_1,\dots,Q_r)$ of subhypergraphs $Q_s\subseteq \hP^{(k-1)}$
	we have 
	\[
		\bigg|\Big|E\cap\bigcup_{s\in[r]}\cK_{k}(Q_s)\Big|-d\,\Big|\!\bigcup_{s\in[r]}\cK_k(Q_s)\Big|\bigg|
	\leq 
	\delta_k\,\big|\cK_k(\hP^{(k-1)})\big|\,.
	\]
\end{dfn}
For $r=1$ this definition coincides with the one in~\eqref{eq:hreg1}. However, for larger~$r$
Definition~\ref{def:hregr} gives a more control over the distribution of the edges of~$H$ in $\cK_k(\hP^{(k-1)})$.
In particular, we may consider (many) subhypergraphs $Q_s$ each individually 
spanning significantly less than $|\cK_k(\hP^{(k-1)})|$ $k$-cliques and still obtain some information 
of the distribution of the edges of $H$ on such a collection $(Q_s)_{s\in[r]}$ on average.
For the proof of the hypergraph regularity lemma the parameter $\delta_k$ is required to be a fixed constant, 
but~$r$ (and the parameter~$\delta$ controlling the regularity of the underlying partition) can be given as a function of the size of the equitable partition, i.e.,~$r$ may depend on 
$(t_1,\dots,t_{k-1})$. This turned out to be useful for the proof of the embedding lemma given in~\cite{NRS06}.
Subsequently it turned out that regularity with $r=1$ is sufficient for the proof of the so-called 
counting/embedding lemma for $3$-uniform hypergraphs (see, e.g.,~\cite{NPRS09}). However, for $k>3$ 
(which we are concerned here with) such a ``simplification'' is still work in progress~\cite{NRS}.
We now are ready to state the hypergraph regularity lemma from~\cite{RSch07RL}*{Theorem~2.3}.
\begin{thm}[Regularity lemma]
	\label{thm:RL}
  For every $k\geq 2$, $\mu>0$, $\delta_k>0$, and for all functions
  $\delta\colon \NN^{k-1}\to(0,1]$ and $r\colon \NN^{k-1}\to\NN$
  there are integers~$T_0$ and~$n_0$ such that
  the following holds for every $k$-uniform hypergraph~$H=(V,E)$ on $|V|=n\geq n_0$ vertices.
  
  There is a $\bt=(t_1,\dots,t_{k-1})\in\NN^{k-1}_{>0}$ and 
  family of partitions $\bcP=\bcP(k-1,\bt)$ satisfying
  \begin{enumerate}[label=\rmlabel]
  \item $\bcP$ is $T_0$-bounded and $(\mu,\delta(\bt))$-equitable and
  \item for all but at most $\delta_k\,|\cK_k(\cP^{(1)})|$ sets $K\in\cK_k(\cP^{(1)})$ the 
  	hypergraph $H$ is $(\delta_k,d_K,r(\bt))$-regular w.r.t.\ the polyad 
	$\hP^{(k-1)}_K\in\hcP^{(k-1)}$ where $d_K=d(H\,|\,\hP^{(k-1)}_K)$.
  \end{enumerate}
\end{thm}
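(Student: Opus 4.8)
The plan is to prove Theorem~\ref{thm:RL} by induction on $k$, using the energy-increment (index) strategy that underlies Szemer\'edi's regularity lemma. For $k=2$ the family of partitions is just a vertex partition $\cP^{(1)}$, condition~\ref{it:33c} is vacuous, and the statement reduces to Szemer\'edi's lemma with the mild addition of the parameter $r$; since regularity with respect to \emph{unions} of $r$ sub-boxes follows from ordinary $\eps$-regularity once $\eps$ is small enough in terms of $1/r$ and $\delta_k$, the same defect-Cauchy--Schwarz increment applies and yields a tower-type bound $T_0=T_0(\delta_k,r)$.

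For the inductive step I would split the argument into two phases. The first phase produces the underlying \emph{equitable family of partitions} $\bcP=(\cP^{(1)},\dots,\cP^{(k-1)})$ and makes no reference to $H$: it is a statement about partitioning $\cK_j(\cP^{(1)})$ for $j<k$ so that every polyad complex is regular, and it is proved by applying the inductive regularity lemma level by level. Having constructed $\cP^{(1)},\dots,\cP^{(j-1)}$, one treats each polyad $\hP^{(j-1)}\in\hcP^{(j-1)}$ together with its clique set $\cK_j(\hP^{(j-1)})$ as a host and subpartitions it into $t_j$ pieces that are $(\delta,1/t_j)$-regular with respect to the polyad, again via an energy increment, now on the mean-square relative densities $d(\,\cdot\,|\,\hP^{(j-1)})$. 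The size and counting statement~\ref{it:33d} then follows by iterating a dense counting lemma for regular complexes, itself proved by induction on $j$.

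The second phase regularises $H$ relative to the family $\bcP$ already obtained. Define the index of $\bcP$ (relative to $H$) as the average of $d(H\,|\,\hP^{(k-1)})^2$ over the polyads $\hP^{(k-1)}\in\hcP^{(k-1)}$, weighted by $|\cK_k(\hP^{(k-1)})|$; this quantity lies in $[0,1]$. If $H$ fails to be $(\delta_k,d_K,r(\bt))$-regular for more than $\delta_k\,|\cK_k(\cP^{(1)})|$ sets $K$, then each bad $K$ comes with a witnessing collection $(Q_1,\dots,Q_{r(\bt)})$ of subhypergraphs of $\hP^{(k-1)}_K$ recording a genuine density fluctuation; splitting the cliques of each bad polyad according to membership in $\bigcup_s\cK_k(Q_s)$ and its complement refines $\cP^{(k-1)}$ and, by the defect form of Cauchy--Schwarz, raises the index by at least a constant $c(\delta_k)>0$ \emph{independent of $r$}. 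As the index is bounded by $1$, after at most $1/c(\delta_k)$ rounds $H$ is regular with respect to all but $\delta_k\,|\cK_k(\cP^{(1)})|$ polyads, and we stop.

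The real obstacle is that refining $\cP^{(k-1)}$ in the second phase generically destroys equitability: the new classes need not be regular over the polyads of $\cP^{(k-2)}$, and the counting property~\ref{it:33d} may fail. One must therefore re-run the first phase after each refinement of $H$ and argue that the interleaved process still terminates. This is exactly the r\^ole of allowing $\delta$ and $r$ to be \emph{functions} of $\bt$: one fixes beforehand, by a downward induction over the levels $k-1,k-2,\dots,1$, a hierarchy of regularity parameters so fine that re-regularising any level triggers only a controlled number of further refinements above it, and so that the total number of outer rounds is bounded in terms of $\mu$, $\delta_k$, $\delta(\cdot)$, $r(\cdot)$ alone. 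Bookkeeping this hierarchy of constants is where the difficulty lies; the individual increments are routine, and the resulting $T_0$ is an iterated-tower function of the input data that grows very rapidly with $k$.
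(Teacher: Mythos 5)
First, a point of comparison with the paper: Theorem~\ref{thm:RL} is not proved there at all. It is quoted verbatim from \cite{RSch07RL}*{Theorem~2.3}, and the only ingredient not contained in that reference, namely part~\ref{it:33d} of Definition~\ref{def:equitable}, is observed to follow from parts~\ref{it:33b} and~\ref{it:33c} together with the dense counting lemma of \cite{KRS02}. Your proposal therefore has to stand on its own as a proof of the hypergraph regularity lemma, and as such it contains a genuine gap rather than a complete argument.

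Your outline correctly identifies the standard architecture: induction on the uniformity, an index defined as the mean-square relative density weighted by clique counts, witnesses of $(\delta_k,d_K,r)$-irregularity feeding a refinement, a defect Cauchy--Schwarz increment independent of $r$, and boundedness of the index forcing termination; your treatment of the base case $k=2$ and your derivation of part~\ref{it:33d} from a dense counting lemma also match how this is done in the literature. But the step you set aside as ``bookkeeping'' is the actual content of the theorem. Concretely, after one increment round the sets $\bigcup_{s}\cK_k(Q_s)$ coming from the witnesses refine $\cP^{(k-1)}$ into classes that are no longer regular with respect to the polyads of $\cP^{(k-2)}$, no longer satisfy the structural conditions \eqref{eq:Pj} and \eqref{eq:equit2} (a fixed number $t_{k-1}$ of classes above each polyad), and no longer have relative density $1/t_{k-1}$; restoring all of this forces refinements of every lower level, which changes $\bt$ and hence the target parameters $\delta(\bt)$ and $r(\bt)$, which may in turn invalidate the regularity just achieved. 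Proving that this feedback loop closes --- that a hierarchy of auxiliary functions can be fixed in advance so that the interleaved process terminates after boundedly many rounds with all properties of Definitions~\ref{def:partition} and~\ref{def:equitable} holding simultaneously --- is precisely the hard part of \cite{RS04} and \cite{RSch07RL} and occupies most of those papers; your proposal names this obstacle but does not resolve it. A smaller inaccuracy: the first phase cannot be run ``with no reference to $H$,'' since the witnesses of irregularity of $H$ must be absorbed into $\cP^{(k-1)}$ before the lower levels are re-equitised, so the two phases are inherently interleaved from the start.
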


Notice that part~\ref{it:33d} of Definition~\ref{def:equitable}
is not part of the statement of the hypergraph
regularity lemma from~\cite{RSch07RL}. However, in applications it is often helpful and provided that the function
$\delta$ decreases sufficiently fast it is actually a consequence of properties~\ref{it:33b} and~\ref{it:33c} 
and the so-called \emph{dense counting lemma} from~\cite{KRS02} (see also~\cite{RSch07CL}*{Theorem~2.1}).

Finally, we state a consequence of the (general) counting lemma accompanying Theorem~\ref{thm:RL}, which allows 
to embed $k$-uniform hypergraphs of given isomorphism type~$F$ into~$H$. We only state a variant of this lemma   
suited for the proof of Proposition~\ref{prop:main}, i.e., 
specialised for embedding the three-edge hypergraph $F^{(k)}$ on $k+1$ vertices in sufficiently regular blocks from the partition provided by the regularity lemma. 
This result follows from~\cite{RSch07CL}*{Theorem~1.3}.
\begin{thm}[Embedding lemma]
	\label{thm:EL}
	For $k\geq 2$ and $d_k>0$ there exists $\delta_k>0$ and there are functions 
	$\delta\colon \NN^{k-2}\to(0,1]$, $r\colon \NN^{k-2}\to\NN$, and   $N\colon \NN^{k-2}\to\NN$
	such that the following holds for every $\bt=(t_2,\dots,t_{k-1})\in\NN^{k-2}_{>0}$.
	
	Suppose $\bP=(P^{(2)},\dots,P^{(k-1)})$ is a $\big(\delta(\bt),(1/t_2,\dots,1/t_{k-1})\big)$-regular
	$(k-1,k+1)$-complex with vertex partition $V_1\dcup\dots\dcup V_{k+1}$ and 
	$|V_1|,\dots,|V_{k+1}|\geq N(\bt)$ and suppose~$H$ is \mbox{a~$k$-uniform} $(k+1)$-partite 
	hypergraph on the same vertex partition such that for each of the three choices of~$a$ and $b$ with $k-1\leq a<b\leq k+1$
	there is some $d_{a,b}\geq d_k$ for which~$H$ is~$(\delta_k,d_{a,b},r(\bt))$-regular w.r.t.\ $P^{(k-1)}[V_1,\dots,V_{k-2},V_a,V_b]$.
		
	Then $H$ contains a copy of $F^{(k)}$ with vertices $v_i\in V_i$ for $i=1,\dots,k+1$
	and edges of the form $v_1\dots v_{k-2}v_av_b$ for $k-1\leq a<b\leq k+1$.  
\end{thm}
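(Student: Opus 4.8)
The plan is to deduce Theorem~\ref{thm:EL} directly from the general counting/embedding lemma of R\"odl and Schacht, namely \cite{RSch07CL}*{Theorem~1.3}, by specialising it to the hypergraph $F=F^{(k)}$ placed in a $(k+1)$-partite environment; for $k=2$ this is just the classical triangle counting lemma for $\delta$-regular tripartite graphs, so the content is in the regular-complex machinery for $k\ge 3$.

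First I would record the combinatorial shape of $F^{(k)}$. Fixing the labelling implicit in the statement, $F^{(k)}$ has vertex set $[k+1]$ and its three edges are the $k$-sets $f_{a,b}=\{1,\dots,k-2\}\cup\{a,b\}$, one for each pair $(a,b)$ with $k-1\le a<b\le k+1$. Hence a copy of $F^{(k)}$ with $v_i\in V_i$ and edges of the prescribed form is precisely a tuple $(v_1,\dots,v_{k+1})\in V_1\times\dots\times V_{k+1}$ which forms a clique in the $(k-1)$-complex $\bP$ (so that all the lower-uniformity scaffolding demanded by the counting lemma is present) and has each of its three $k$-sets $\{v_1,\dots,v_{k-2},v_a,v_b\}$ lying in $E(H)$. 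The key observation is that $P^{(k-1)}[V_1,\dots,V_{k-2},V_a,V_b]$ is exactly the polyad that supports the edge $f_{a,b}$, so the three regularity hypotheses imposed on $H$ in Theorem~\ref{thm:EL} are exactly one per edge of $F^{(k)}$, which is what \cite{RSch07CL}*{Theorem~1.3} requires.

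Then I would run the deduction. Given $k$ and $d_k>0$, applying \cite{RSch07CL}*{Theorem~1.3} to $F=F^{(k)}$ produces a constant $\delta_k>0$ and functions $\delta,r,N$ on $\NN^{k-2}$ such that, for every $\bt=(t_2,\dots,t_{k-1})$, whenever $\bP$ is a $\bigl(\delta(\bt),(1/t_2,\dots,1/t_{k-1})\bigr)$-regular $(k-1,k+1)$-complex with all parts of size at least $N(\bt)$ and $H$ is $(\delta_k,d_{a,b},r(\bt))$-regular with respect to $P^{(k-1)}[V_1,\dots,V_{k-2},V_a,V_b]$ with $d_{a,b}\ge d_k$ for each of the three pairs, the number of copies of $F^{(k)}$ of the form above equals
\[
	(1+o(1))\Bigl(\prod_{k-1\le a<b\le k+1}d_{a,b}\Bigr)\,c(\bt)\,\prod_{i=1}^{k+1}|V_i|\,,
\]
where $c(\bt)>0$ is an explicit constant built from the relative densities $1/t_j$ of the complex over the $j$-subsets occurring in the down-closure of $F^{(k)}$, and depends only on $k$ and $\bt$. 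As each $d_{a,b}\ge d_k>0$, the main term dominates the error term once the parts are large enough — which is exactly what the choice of $\delta_k,\delta,r,N$ in the cited theorem guarantees — so this count is strictly positive and the desired copy of $F^{(k)}$ exists.

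The main obstacle I anticipate is purely bookkeeping, namely matching the precise formulation of the counting lemma one cites. In particular, should that formulation require $H$ to be regular with respect to all $k+1$ polyads of $\bP$ rather than only the three that support edges of $F^{(k)}$, one first replaces $H$ by the hypergraph $H'$ obtained from it by adding, for every $k$-subset $\{i_1,\dots,i_k\}\subseteq[k+1]$ that supports no edge of $F^{(k)}$, all of $\cK_k\bigl(P^{(k-1)}[V_{i_1},\dots,V_{i_k}]\bigr)$ as edges. Then $H'$ is trivially $(\delta_k,1,r(\bt))$-regular with respect to each such polyad; it agrees with $H$ on the three relevant polyads (which are pairwise disjoint as sets of $k$-sets); and, since every labelled copy of $F^{(k)}$ uses only the three special $k$-sets as edges, $H'$ contains such a copy if and only if $H$ does. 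Applying the counting lemma to $H'$ — now regular with respect to all polyads, all densities at least $d_k$ — then reduces everything to the case treated above.
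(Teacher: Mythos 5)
Your deduction is correct and is essentially the paper's own approach: the paper simply states that Theorem~\ref{thm:EL} ``follows from~\cite{RSch07CL}*{Theorem~1.3}'', and your specialisation of that counting lemma to $F^{(k)}$ --- including the standard device of filling the non-edge polyads with all of their $k$-cliques so that $H'$ is trivially regular there with density $1$ --- is exactly the intended routine verification.
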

In the application of Theorem~\ref{thm:EL} the complex $\bP$ will be given by a suitable collection
of polyads from the regular partition given by Theorem~\ref{thm:RL}.
We remark that the regularity lemma also allows the functions $\delta(\cdot)$ and $r(\cdot)$ 
to depend on~$t_1$. However, this will be of no use here and is not required for the application 
of the embedding lemma.

For the proof of Proposition~\ref{prop:main} we consider a 
$(2^{1-k}+\eps,\eta,[k]^{(k-2)})$-dense hypergraph~$H$
and apply the regularity lemma to it. 
The main part of the proof concerns the appropriate selection of dense and regular polyads, that are ready for an application of the embedding lemma. This will be achieved by Proposition~\ref{prop:reduced}, which is proved 
in Sections~\ref{sec:proof}--\ref{sec:triangle}.
Proposition~\ref{prop:reduced} relies on the notion of \emph{reduced hypergraphs}
appropriate for our situation, which is the focus of the next section.

\section{Reduction to reduced hypergraphs}
\label{sec:rreduced}

As in~\cites{RRS-a,RRS-b,RRS-c,RRS-d} we will use 
the hypergraph regularity method for transforming the problem at hand into a somewhat 
different problem that speaks about certain ``reduced hypergraphs,'' 
that are going to be introduced next (see Definition~\ref{dfn:reduced} below). 
The assumption of $[k]^{(k-2)}$-denseness in Proposition~\ref{prop:main} 
allows us to work with the following concept.

\begin{dfn}
\label{dfn:reduced}
Suppose that we have a finite index set $I$ and for each $x\in I^{(k-1)}$ a finite 
nonempty vertex set $\cP_x$ such that for any two distinct $x, x'\in I^{(k-1)}$
the sets $\cP_x$ and $\cP_{x'}$ are disjoint. 
Assume further that for any $y\in I^{(k)}$ we have a $k$-uniform $k$-partite hypergraph~$\cA_y$ with vertex partition $\bigdcup_{x\in y^{(k-1)}}\cP_x$.
Then the $k$-uniform $\binom{|I|}{k-1}$-partite hypergraph~$\cA$ with
\[
	V(\cA)=\bigdcup_{x\in I^{(k-1)}}\cP_x
	\qand
	E(\cA)=\bigcup_{y\in I^{(k)}}\cA_y
\]
is a {\it reduced $k$-uniform hypergraph}. We also refer to $I$ as the {\it index set} 
of~$\cA$, to the sets~$\cP_x$ as the {\it vertex classes} of~$\cA$, and to the $\binom{|I|}{k}$
hypergraphs~$\cA_y$ as the {\it constituents} of~$\cA$.
\end{dfn}

In our context the reduced hypergraph $\cA$ encodes (a suitable collection of)
dense and regular polyads of a family of partitions provided by the 
regularity lemma applied to a hypergraph $H$. In fact, the vertex classes $\cP_x$ 
shall correspond to the $t_{k-1}$ different $(k-1)$-uniform $(k-1)$-partite 
hypergraphs that ``belong'' to a given polyad $\hP^{(k-2)}\in \hcP^{(k-2)}$ 
(see~\eqref{eq:equit2}). 
Moreover, a collection of~$k$ vertices, each from a different vertex class of a 
constituent of $\cA$, will then correspond to a $(k-1)$-uniform $k$-partite 
polyad $\hP^{(k-1)}$ from the family of partitions, and 
an edge of the constituent will signify that~$H$ is sufficiently dense and regular 
on this polyad.
As it will turn out below, the assumption 
that the hypergraph $H$ in Proposition~\ref{prop:main} is
$\bigl(2^{1-k}+\eps, \eta, [k]^{(k-2)}\bigr)$-dense can be ``translated'' into a 
density condition applying to the constituents of the reduced hypergraph that we 
obtain via regularisation.

\begin{dfn}
\label{dfn:red-dense}
Given a real number $d\in [0,1]$ and a reduced $k$-uniform hypergraph~$\cA$ 
with index set $I$, we say that $\cA$ is 
{\it $d$-dense} 
provided that 
\[
	e(\cA_y)\ge d\cdot \prod_{x\in y^{(k-1)}}|\cP_x|
\]
holds for all $y\in I^{(k)}$.
\end{dfn}

Next we need to tell which configuration that might appear in a reduced hypergraph 
corresponds (in view of the embedding lemma) to an $F^{(k)}$ in the original hypergraph.

\begin{dfn}
Let $\cA$ be a reduced $k$-uniform hypergraph with index set $I$. A set $z\in I^{(k+1)}$
{\it supports an $F^{(k)}$} if for every $x\in z^{(k-1)}$ 
one can select a $P_x\in \cP_x$ such that there are 
at least 
three sets $y\in z^{(k)}$ satisfying
\begin{equation} \label{eq:Fk}
	\bigl\{P_x\colon x\in y^{(k-1)}\bigr\}\in E\bigl(\cA_y\bigr)\,.
\end{equation}
\end{dfn}

The following alternative description of $(k+1)$-sets supporting an $F^{(k)}$
will turn out to be useful in Section~\ref{sec:proof}. 

\begin{fact}\label{fact:revision}
Suppose that $\cA$ is a reduced $k$-uniform hypergraph with index set $I$. 
A set~${z\in I^{(k+1)}}$ supports an $F^{(k)}$ if there exist distinct $k$-sets
$y_1, y_2, y_3\in z^{(k)}$ and edges $e_1\in E\bigl(\cA_{y_1}\bigr)$, 
$e_2\in E\bigl(\cA_{y_2}\bigr)$, and $e_3\in E\bigl(\cA_{y_3}\bigr)$
no two of which are disjoint. 
\end{fact}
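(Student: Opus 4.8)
The plan is to build the required selection of vertices explicitly and check that it simultaneously witnesses all three of $y_1,y_2,y_3$. So suppose $z\in I^{(k+1)}$, distinct $y_1,y_2,y_3\in z^{(k)}$, and edges $e_i\in E(\cA_{y_i})$ with $e_i\cap e_j\ne\emptyset$ for all $i\ne j$ are given; the goal is to produce $P_x\in\cP_x$ for every $x\in z^{(k-1)}$ such that $\{P_x\colon x\in y_i^{(k-1)}\}\in E(\cA_{y_i})$ for $i=1,2,3$, which is exactly \eqref{eq:Fk} for three members of $z^{(k)}$.

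First I would record the elementary combinatorics of $(k-1)$-subsets inside a $(k+1)$-set. Writing $y_i=z\setminus\{a_i\}$ with $a_1,a_2,a_3$ pairwise distinct (this uses $y_1,y_2,y_3$ distinct), one has $y_i\cap y_j=z\setminus\{a_i,a_j\}$, which is itself a $(k-1)$-element set; moreover it is the \emph{unique} common member of $y_i^{(k-1)}$ and $y_j^{(k-1)}$, since any $(k-1)$-subset of $y_i$ contained in $y_j$ must be contained in $y_i\cap y_j$ and hence equal to it. In particular no single element of $z^{(k-1)}$ lies in all three of $y_1^{(k-1)},y_2^{(k-1)},y_3^{(k-1)}$, because $a_1,a_2,a_3$ are distinct.

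Next, decode the hypothesis ``no two of $e_1,e_2,e_3$ are disjoint''. Since $\cA_{y_i}$ is $k$-uniform and $k$-partite with parts $\{\cP_x\colon x\in y_i^{(k-1)}\}$, each edge $e_i$ is a transversal: it contains exactly one vertex $Q^{(i)}_x\in\cP_x$ for each $x\in y_i^{(k-1)}$. Because the classes $\cP_x$ are pairwise disjoint, any vertex common to $e_i$ and $e_j$ must lie in $\cP_x$ for some $x\in y_i^{(k-1)}\cap y_j^{(k-1)}$; by the previous paragraph this intersection is the single index $x_{ij}:=z\setminus\{a_i,a_j\}$. Hence $e_i\cap e_j\ne\emptyset$ forces $Q^{(i)}_{x_{ij}}=Q^{(j)}_{x_{ij}}$, i.e.\ the three transversals agree wherever their index sets overlap. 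Now set $P_x:=Q^{(i)}_x$ for $x\in y_i^{(k-1)}$ (well defined by this agreement and the fact that no index is shared by all three), and choose $P_x\in\cP_x$ arbitrarily (the classes are nonempty) for the remaining $x\in z^{(k-1)}\setminus(y_1^{(k-1)}\cup y_2^{(k-1)}\cup y_3^{(k-1)})$. Then for each $i$ we have $\{P_x\colon x\in y_i^{(k-1)}\}=\{Q^{(i)}_x\colon x\in y_i^{(k-1)}\}=e_i\in E(\cA_{y_i})$, so $z$ supports an $F^{(k)}$.

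I expect no real obstacle: the only thing that needs care is the bookkeeping observation that distinct $k$-subsets of a $(k+1)$-set meet in exactly one $(k-1)$-subset, which is precisely what turns ``no two disjoint'' into ``the partial choices are consistent''. A brief sanity check of the degenerate case $k=2$ is worthwhile, where $\cA$ is a union of bipartite graphs on the classes $\cP_x$ and supporting an $F^{(2)}=K_3$ amounts to finding an honest triangle $P_a P_b P_c$ from three pairwise-meeting edges.
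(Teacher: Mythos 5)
Your argument is correct and is essentially the paper's own proof: both define the $P_x$ so that each $e_i$ is realised exactly, and both hinge on the observation that $y_i^{(k-1)}\cap y_j^{(k-1)}$ consists of the single set $y_i\cap y_j$, so that $e_i\cap e_j\ne\emptyset$ forces the two transversals to agree at the unique shared index. Your explicit remark that no index lies in all three of the $y_i^{(k-1)}$ is a small clarification the paper leaves implicit, but the route is the same.
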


\begin{proof}
We intend to choose vertices $P_x\in\cP_x$ for $x\in z^{(k-1)}$ such that 
\begin{equation} \label{eq:Fk2}
	e_i= \bigl\{P_x\colon x\in y_i^{(k-1)}\bigr\}
\end{equation}
holds for $i=1,2,3$. Notice that if $x$ does not belong to
$y_1^{(k-1)}\cup y_2^{(k-1)}\cup y_3^{(k-1)}$ the choice of~$P_x$ is
immaterial. Moreover, if $x$ belongs to exactly one of the sets 
$y_1^{(k-1)}$, $y_2^{(k-1)}$, and~$y_3^{(k-1)}$, then the corresponding 
instance of~\eqref{eq:Fk2} determines $P_x$ uniquely. 

It remains to check that if $x$ belongs to at least two of these sets, 
then the demands imposed on $P_x$ by~\eqref{eq:Fk2} do not contradict each other.

Now suppose, for instance, that $x\in y_1^{(k-1)}\cap y_2^{(k-1)}=(y_1\cap y_2)^{(k-1)}$.
Owing to ${|y_1\cap y_2|=k-1}$ this implies $x=y_1\cap y_2$. Let $a_3$ denote an arbitrary
vertex from $e_1\cap e_2$ and let $\cP_{\overline{x}}$ be the vertex class of $\cA$ 
containing $a_3$. Because of $e_1\in E\bigl(\cA_{y_1}\bigr)$ and $e_2\in E\bigl(\cA_{y_2}\bigr)$
we have $\overline{x}\in y_1^{(k-1)}\cap y_2^{(k-1)}$ and, consequently, 
$\overline{x}=y_1\cap y_2=x$. This shows that it is legitimate to set $P_x=a_3$ and the 
proof of Fact~\ref{eq:Fk2} is complete (see also Figure~\ref{fig:1}).   
\end{proof}

\begin{figure}[ht]
\begin{center}
\includegraphics{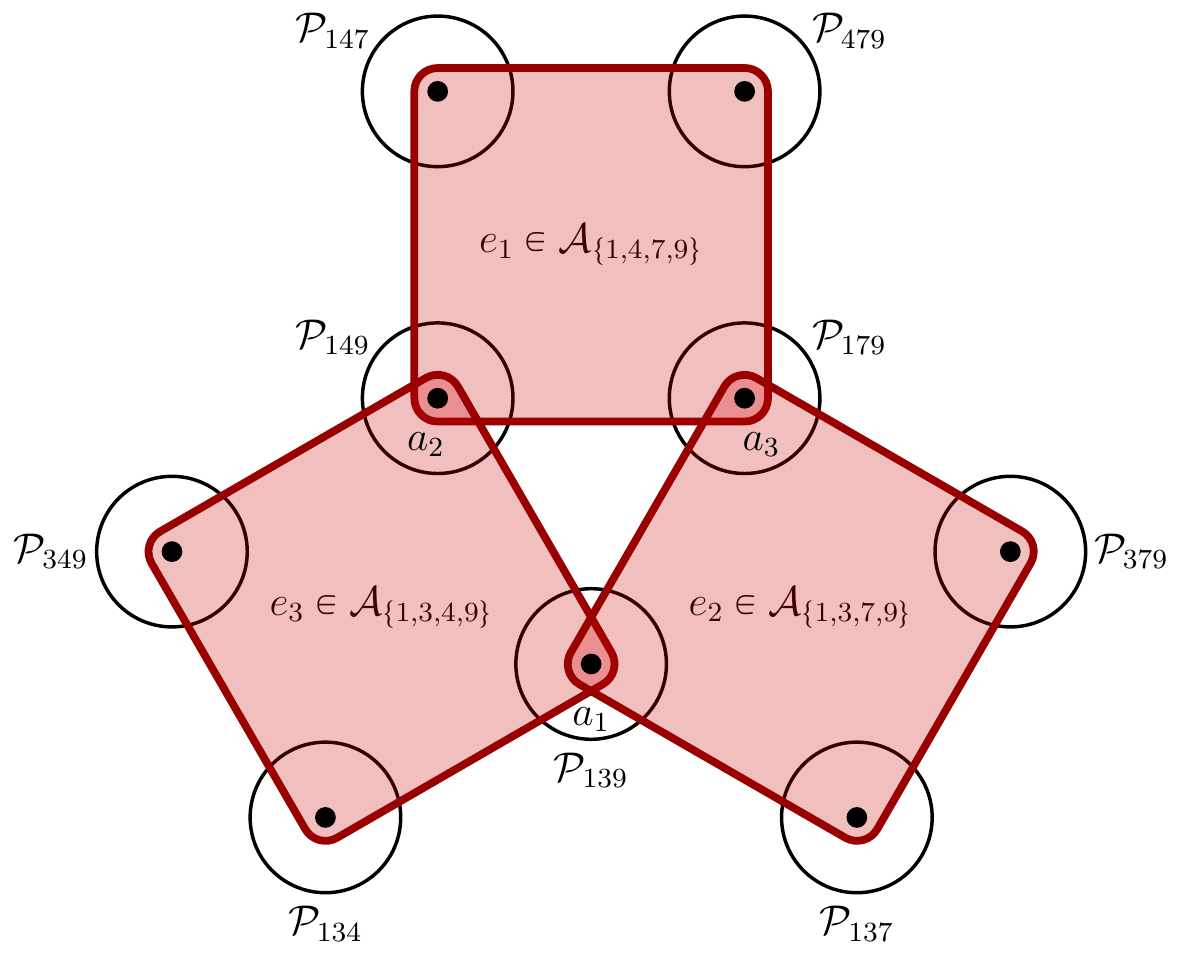}
\end{center}
\caption{$k=4$, $z=\{1,3,4,7,9\}$, $y_1=z\setminus\{3\}$, $y_2=z\setminus\{4\}$, 
and $y_3=z\setminus\{7\}$.}
\label{fig:1}
\end{figure}
We are now ready to formulate a statement about reduced hypergraphs to which
Proposition~\ref{prop:main} reduces  in the light of the hypergraph regularity method.

\begin{prop} 
\label{prop:reduced}
For every $\eps>0$ there exists a positive integer $m$ such that every 
${\bigl(2^{1-k}+\eps\bigr)}$-dense, reduced $k$-uniform hypergraph
with index set of size at least~$m$ supports an $F^{(k)}$.
\end{prop}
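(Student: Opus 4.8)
The plan is to reformulate the statement using the alternative description of supporting an $F^{(k)}$ provided by Fact~\ref{fact:revision}, and then to reduce the resulting problem to two purely graph-theoretic assertions — a ``path lemma'' and a ``triangle lemma''.

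First I would unwind the combinatorics. By Fact~\ref{fact:revision} it is enough to find a $(k-2)$-set $w\subseteq I$, three further indices $b_1,b_2,b_3\in I\setminus w$, and edges $e_1\in E(\cA_{w\cup\{b_2,b_3\}})$, $e_2\in E(\cA_{w\cup\{b_1,b_3\}})$, $e_3\in E(\cA_{w\cup\{b_1,b_2\}})$ that pairwise intersect. Since $\cA$ is many-partite with pairwise disjoint vertex classes, the only vertex class shared by $\cA_{w\cup\{b_2,b_3\}}$ and $\cA_{w\cup\{b_1,b_3\}}$ is $\cP_{w\cup\{b_3\}}$ (and cyclically for the other pairs), so two such edges meet exactly when they agree on their common class, all other coordinates being free. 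Projecting each of the three constituents onto the two of its vertex classes of the form $\cP_{w\cup\{b_s\}}$ therefore turns the task into: locate $w$, indices $b_1,b_2,b_3$, and a transversal triangle — vertices $P_{b_s}\in\cP_{w\cup\{b_s\}}$ — with respect to the three bipartite graphs $G^w_{b_sb_t}$ obtained from the constituents. A short counting argument shows that $(2^{1-k}+\eps)$-denseness of $\cA$ forces each $G^w_{ij}$ to have density at least $2^{1-k}+\eps$.

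Here lies the obstacle, and it also explains why $m$ must be large: for one fixed $w$ the family $\{G^w_{ij}\}_{i,j\in I\setminus w}$ is just a $(2^{1-k}+\eps)$-dense ``reduced $2$-uniform hypergraph'', and for $k\ge 3$ the density $2^{1-k}+\eps$ is well below $\tfrac12$, so — as the tournament construction already demonstrates — a single slice need contain no transversal triangle; the small surplus $\eps$ must be harvested from \emph{many} constituents at once. The plan is thus to build from $\cA$ an auxiliary weighted (and, for bookkeeping, coloured) host graph whose vertices are indexed by pairs consisting of an element of $I$ and a vertex of the associated class, and to prove: \emph{(path lemma)} that, because each constituent is $d$-dense, one can greedily grow a long ``path-like'' configuration — a sequence of indices together with compatible class-vertices, whose length exceeds a threshold depending only on $\eps$ once $|I|\ge m$ — while keeping the relevant densities under control along the way; and \emph{(triangle lemma)} that any sufficiently long such configuration can be ``closed up'' into the transversal-triangle pattern above, hence into a supported $F^{(k)}$. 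The closing step is a Mantel/Motzkin--Straus-type argument: along a long path the neighbourhood sizes, measured as fractions of the class sizes, cannot all remain small, so two of them overlap in a way that supplies the missing third edge, and it is the length of the path that makes the averaging go through. Choosing $w$ (equivalently, the ``centre'' of the path) appropriately is what lets the many constituents cooperate rather than being inspected one slice at a time.

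I expect the hard part to be exactly this closing step: converting the mild surplus over $2^{1-k}$, aggregated over many constituents, into an ``effective'' density above $\tfrac12$ (or directly into the triangle) by exploiting the length of the path — a one-shot Mantel argument is provably insufficient. Two further technical nuisances will need care: the vertex classes $\cP_x$ may be badly unbalanced in size, so the path and triangle lemmas have to be stated for weighted graphs rather than balanced blow-ups; and the entire argument must deliver $m=m(\eps)$ independent of the actual size of the index set, which dictates how the path length and the pigeonhole thresholds are calibrated.
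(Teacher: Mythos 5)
Your skeleton matches the paper's architecture — reduce via Fact~\ref{fact:revision} to a transversal triangle among bipartite projections of constituents, and split the work into a ``path lemma'' and a ``triangle lemma'' — but the actual content of those two lemmas is missing, and the one quantitative invariant you do extract is the wrong one. Knowing that each bipartite projection $G^w_{ij}$ has edge density at least $2^{1-k}+\eps$ is useless for $k\ge 3$, as you note; the remedy, however, is not to aggregate density across many constituents along a long inter-constituent path. The correct invariant is an $L^2$-type one: call a bipartite graph with ordered bipartition $(X,Y)$ \emph{rich} if many vertices $y\in Y$ start more than $\bigl(\tfrac14+\xi\bigr)|X|\,|Y|$ two-edge walks. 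The path lemma then operates \emph{within a single constituent}: order the $k$ vertex classes of $\cA_y$, observe that each edge of $\cA_y$ projects to a path on $k$ vertices, and show by a Cauchy--Schwarz induction that if all $k-1$ consecutive bipartite projections were poor, the number of such paths would fall below $\bigl(2^{1-k}+\eps\bigr)\prod_x|\cP_x|$ — the factor $2^{1-k}$ is exactly $(1/2)^{k-1}$ for the $k-1$ edges of the path, so the entire density surplus is spent inside one constituent to certify that at least one of its projections is rich (i.e., effectively above density $\tfrac12$ in the cherry-count sense). No cross-constituent density aggregation occurs or is needed.

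The large index set enters elsewhere, through two steps you have not accounted for. First, different constituents may be rich at different positions $r\in[k-1]$ of their ordered class sequence, so one needs Ramsey's theorem to pass to an $m_*$-subset $Q$ on which the rich position is constant, and then a structured choice of the $(k-2)$-set $w=z^-$ (a prefix together with a suffix of $Q$) so that for all $t_1<t_2$ in the remaining middle interval the pair $(t_1,t_2)$ sits exactly at positions $r,r+1$ of $z^-\cup\{t_1,t_2\}$; only then do the rich projections assemble into a single multipartite graph all of whose $\binom{|J|}{2}$ bipartite pieces are rich. Second, a single rich bipartite graph, or even three of them, need not contain a triangle — richness corresponds only to density slightly above $\tfrac12$, which Mantel does not beat in one shot — so the triangle lemma genuinely requires $\Omega(\xi^{-1})$ mutually rich parts and a pigeonhole/double-counting argument on neighbourhoods (this is the part your ``closing step'' gestures at, and it is indeed the delicate point, but it rests on the richness hypothesis, not on raw density $2^{1-k}+\eps$). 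As written, your proposal neither states these lemmas precisely nor supplies the mechanism (Cauchy--Schwarz on cherry counts at threshold $\tfrac14$) that makes the reduction from $2^{1-k}+\eps$ to triangle-findability work.
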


In the rest of this section we shall show that this statement does indeed imply
our main result. The three subsequent sections will then deal with the proof of 
Proposition~\ref{prop:reduced}.

\begin{proof}[Proof of Proposition~\ref{prop:main} assuming Proposition~\ref{prop:reduced}]
Given $\eps>0$ we have to define $\eta>0$ and $n_0\in\NN$ with the desired property. 
We divide the argument that follows into four steps.

\subsection*{Step 1: Selection of constants.} 
We commence by picking some auxiliary constants  
\begin{equation}\label{eq:dk-mu}
	d_k\ll \eps \qand \mu\ll m^{-1}\ll \eps\,.
\end{equation}
With $d_k$ we appeal to the embedding lemma, i.e., Theorem~\ref{thm:EL}, and it yields
a constant $\delta_k$ and functions 
$\delta\colon \NN^{k-2}\to(0,1]$, $r\colon \NN^{k-2}\to\NN$, and  $N\colon \NN^{k-2}\to\NN$.
We need some further constants
\begin{equation} \label{eq:delta'}
	\delta'_k\ll \xi \ll \delta_k, m^{-1}
\end{equation}
that depend solely on $\delta_k$ and $m$.

Next we deliver $\mu$, $\delta'_k$, and the functions
\[
	\widetilde{\delta}\colon \NN^{k-1}\longrightarrow (0, 1]\,, \quad
	(t_1, \ldots, t_{k-1})\longmapsto \delta(t_2, \ldots, t_{k-1})
\]
as well as 
\[
	\widetilde{r}\colon \NN^{k-1}\longrightarrow \NN\,,\quad
	(t_1, \ldots, t_{k-1})\longmapsto r(t_2, \ldots, t_{k-1})
\]
to the hypergraph regularity lemma, thus receiving two large integers $T_0$ and $n'_0$.
Finally we take 
\begin{equation}\label{eq:n0}
	\eta\ll T_0^{-1} \qand 
	n_0=\max\bigl(\{2T_0\cdot N(t_2, \ldots, t_{k-2})\colon t_2, \ldots, t_k\le T_0\}
					\cup\{n_0'\}\bigr)\,.
\end{equation}
Now let $H=(V, E)$ be any $\bigl(2^{1-k}+\eps, \eta, [k]^{(k-2)}\bigr)$-dense
$k$-uniform hypergraph
with ${|V|=n\ge n_0}$. We are to prove that $H$ contains a copy of $F^{(k)}$.

\subsection*{Step 2: \texorpdfstring{Selection from $\cP^{(k-2)}$}{Selecting from the partition}.} The regularity 
lemma yields a $T_0$-bounded and $(\mu,\widetilde{\delta}(\bt))$-equitable 
partition $\bcP$ of $V^{(k-1)}$ for some $\bt=(t_1,\dots,t_{k-1})\in\NN^{k-1}_{>0}$ 
such that 
\begin{enumerate}  
	\item[$(*)$] for all but at most $\delta'_k\,|\cK_k(\cP^{(1)})|$ sets $K\in\cK_k(\cP^{(1)})$ 
	the hypergraph $H$ is $(\delta'_k,d_K,\widetilde{r}(\bt))$-regular w.r.t.\ the polyad 
	$\hP^{(k-1)}_K\in\hcP^{(k-1)}$, where $d_K=d(H\,|\,\hP^{(k-1)}_K)$.
\end{enumerate}
For the rest of this proof we will simply say that $H$ is ``regular'' w.r.t.\ 
to a polyad $\hP^{(k-1)}_K\in\hcP^{(k-1)}$, when we mean
that it is $(\delta'_k,d_K,\widetilde{r}(\bt))$-regular w.r.t. it. 

The remaining part of this step is only needed when $k\ge 4$.   
For every $(k-2)$-subset~$\cW$ of~$\cP^{(1)}$ 
the set 
\[
	\cK_{k-2}(\cW)=\bigl\{J\in V^{(k-2)} \colon 
	J\cap V_{i}\ne \emptyset \text{ for every } V_i\in \cW\bigr\}
\]
is split by $\cP^{(k-2)}$ into the same number 
$t^*=\prod_{2\le \ell\le k-2}t_\ell^{\binom{k-2}{\ell}}$
of $(k-2)$-uniform hypergraphs.
Let us now pick for each such $\cW$ one of these $t^*$ hypergraphs 
as follows. For every transversal of $\cP^{(1)}$, i.e., a $t_1$-element 
set $T\subseteq V$ with $|T\cap V_i|=1$ for every $i\in[t_1]$, we consider the 
selection
\[
	\mathfrak{S}_T
	=\bigl\{P^{(k-2)}_J\in\cP^{(k-2)}\colon J\in T^{(k-2)}\bigr\}
\]	
and let
\[
	\cK_k(\mathfrak{S}_T)=\bigl\{K\in \cK_k(\cP^{(1)})\colon
	P^{(k-2)}_J\in\mathfrak{S}_T \text{ for every } J\in K^{(k-2)} \bigr\}\,.  
\]
be the collection of $k$-subsets of $V$ that are supported by~$\mathfrak{S}_T$. 

Since by Definition~\ref{def:equitable}~\ref{it:33d} all $(k-2)$-uniform 
$k$-partite polyads have the same volume up to a multiplicative factor controlled 
by $\mu$, a simple averaging argument shows that for some appropriate transversal~$T$ 
all but at most $2 \delta'_k\, |\cK_k(\mathfrak{S}_T)|$ members of $\cK_k(\mathfrak{S}_T)$ have the property that~$H$ is regular with respect to their polyad. 
From now on we fix one such choice of $T$ and the corresponding collection $\mathfrak{S}_T$.

\subsection*{Step 3: \texorpdfstring{Passing to an $[m]$-subset of $\cP^{(1)}$}{Passing to a subset of vertex classes}.} 
Notice that Definition~~\ref{def:equitable}~\ref{it:33a} and $\mu\ll m^{-1}$ yield $t_1\ge m$.

Now consider the auxiliary $k$-uniform hypergraph~$\cB$ with vertex set~$\cP^{(1)}$
having all those $k$-subsets $\cY$ of~$\cP^{(1)}$ as edges for which more than 
$\xi\, |\cK_k(\mathfrak{S}_T)\cap \cK_k(\cY)|$ members of
\[
	\cK_k(\cY)=\bigl\{K\in V^{(k)} \colon 
	K\cap V_{i}\ne \emptyset \text{ for every } V_i\in \cY\bigr\}
\]
have the property that $H$ fails to be regular w.r.t.\ their polyad, i.e., $\cY\in E(\cB)$ if
\begin{multline*}
	\big|\big\{K\in\cK_k(\mathfrak{S}_T)\cap\cK_k(\cY)\colon H\ 
	\text{ is not $(\delta'_k,d_K,\widetilde{r}(\bt))$-regular w.r.t.\ } \hP^{(k-1)}_K\big\}\big|\\
	> \xi\, |\cK_k(\mathfrak{S}_T)\cap \cK_k(\cY)|\,.
\end{multline*}
By our choice of $\mathfrak{S}_T$ and~\eqref{eq:delta'} we can achieve that~$\cB$
has at most $\xi\binom{t_1}{k}$ edges. 
Consequently an $m$-subset of~$\cP^{(1)}$ spans on average no more than $\xi\binom{m}{k}$
edges in $\cB$. In particular, an appropriate choice of $\xi\ll m^{-1}$ guarantees 
that $\cB$ has an independent set $\cM$ of size~$m$. 

We shall now define a reduced $k$-uniform 
hypergraph~$\cA$ with index set $\cM$. For every $(k-1)$-subset $\cX$ of $\cM$
the vertex class $\cP_{\cX}$ is defined to be the set of all $P^{(k-1)}\in \cP^{(k-1)}$ 
with $P^{(k-1)}\subseteq \cK_{k-1}(\cX)$ whose polyads are composed of members 
of $\mathfrak{S}_T$, i.e., $P^{(k-1)}\in\cP_{\cX}$ if for some (and hence for every) $J\in E(P^{(k-1)})$
we have
\[
	P^{(k-2)}_I\in\mathfrak{S}_T\ \text{for every $I\in J^{(k-2)}$.}
\]
As a consequence all the vertex classes $\cP_{\cX}$ have the same size $t_{k-1}$. 
It remains to define the constituents of $\cA$. Given a $k$-subset~$\cY$ of $\cM$
we let $E(\cA_\cY)$ be the collection of all $k$-subsets of $\bigcup_{\cX\in \cY^{(k-1)}}\cP_\cX$
that form a $(k-1)$-uniform $k$-partite polyad w.r.t.\ which $H$ is regular and has at least the density $d_k$.

As we will show in our last step, the reduced hypergraph 
\begin{equation}\label{eq:Adense}
	\cA \text{ is } (2^{1-k}+\eps/2)\text{-dense.}
\end{equation}
Owing to $m^{-1}\ll \eps$ and Proposition~\ref{prop:reduced} this will imply 
that~$\cA$ supports an~$F^{(k)}$ and by the definition of $\cA$ this configuration
corresponds to a $\big(\widetilde{\delta}(\bt),(1/t_2,\dots,1/t_{k-1})\big)$-regular
$(k-1, k+1)$-complex on which $H$ is sufficiently dense and regular for the embedding lemma
to be applicable. Moreover, \eqref{eq:n0} and Definition~\ref{def:equitable}~\ref{it:33b}
imply
\[
	|V_i|\ge \frac{n}{2t_1}\ge\frac{n}{2T_0}\ge N(t_2, \ldots, N_{t_{k-2}})
\]
for all $i\in [t_1]$, meaning that the vertex classes of this complex are also 
sufficiently large. Altogether this shows that $H$ contains indeed an $F^{(k)}$
provided that~\eqref{eq:Adense} is true.

\subsection*{Step 4: \texorpdfstring{Verifying~\eqref{eq:Adense}}{Verifying denseness}.}
Given any $k$-subset $\cY$ of $\cM$ we are to prove that 
\[
e(\cA_\cY)\ge (2^{1-k}+\eps/2)t_{k-1}^k\,.
\]
Now, since $H$ is $\bigl(2^{1-k}+\eps, \eta, [k]^{(k-2)}\bigr)$-dense, we know that
\begin{equation} \label{eq:H1}
	(2^{1-k}+\eps)|\cK_k(\cY)\cap \cK_k(\mathfrak{S}_T)|-\eta n^k \le
	|\cK_k(\cY)\cap \cK_k(\mathfrak{S}_T)\cap e(H)|\,.
\end{equation}
By Definition~\ref{def:equitable}~\ref{it:33d} 
every polyad $\hP^{(k-1)}$ satisfies
\begin{equation}\label{eq:underk}
	|\cK_k(\hP^{(k-1)})|
	=
	(1\pm\mu)\prod_{i=1}^{k-1}\left(\frac{1}{t_i}\right)^{\binom{k}{i}}n^k\,.
\end{equation}
and for the $(k-2)$-uniform $k$-partite polyad defined by the selection
$\mathfrak{S}_T$ restricted to the vertex classes in $\cY$
we have 
\begin{equation}\label{eq:under}
	|\cK_k(\cY)\cap \cK_k(\mathfrak{S}_T)|
	=
	(1\pm\mu)\prod_{i=1}^{k-2}\left(\frac{1}{t_i}\right)^{\binom{k}{i}}n^k\,.
\end{equation}
Combining the lower bound in~\eqref{eq:under} with our choice
$\eta\ll T_0^{-1}, \eps$
leads to
\[
	|\cK_k(\cY)\cap \cK(\mathfrak{S}_T)|
	\ge \frac{n^k}{T_0^{2^k}}\ge \frac{6\eta n^k}{\eps}
\]
and hence~\eqref{eq:H1} rewrites as
\begin{equation} \label{eq:H2}
	\bigl(2^{1-k}+\tfrac{5\eps}6\bigr)|\cK_k(\cY)\cap \cK_k(\mathfrak{S}_T)| \le
	|\cK_k(\cY)\cap \cK_k(\mathfrak{S}_T)\cap e(H)|\,.
\end{equation}
Among the edges of $H$ counted on the right-hand side there may be some belonging
to polyads w.r.t.\ which $H$ fails to be regular, but by our choice of $\cM$ in the 
third step and by $\cY\subseteq \cM$ their number can be at most 
$\xi\, |\cK_k(\mathfrak{S}_T)\cap \cK_k(\cY)|$. Moreover at most 
$d_k |\cK_k(\mathfrak{S}_T)\cap \cK_k(\cY)|$ edges from 
$\cK_k(\cY)\cap \cK_k(\mathfrak{S}_T)\cap e(H)$ can be supported by polyads 
with respect to which $H$ has at most the density $d_k$. The other
edges from this set are supported by polyads that are encoded as edges of $\cA_\cY$.
Conversely
any polyad~$\hP^{(k-1)}$ can support at most
\[
|\cK_{k}(\hP^{(k-1)})|
\overset{\eqref{eq:underk}}{\leq} 
(1+\mu)\frac{1}{t_{k-1}^k}\prod_{i=1}^{k-2}\left(\frac{1}{t_i}\right)^{\binom{k}{i}}n^k
\overset{\eqref{eq:under}}{\leq}
\frac{1+\mu}{1-\mu}\cdot \frac{|\cK_k(\cY)\cap \cK_k(\mathfrak{S}_T)|}{t_{k-1}^k}
\]
edges of $H$. For these reasons~\eqref{eq:H2}
leads to
\[
	\bigl(2^{1-k}+\tfrac{5\eps}6-\xi-d_k\bigr)|\cK_k(\cY)\cap \cK_k(\mathfrak{S}_T)| \le
	e(\cA_{\cY})\cdot \frac{1+\mu}{1-\mu}\cdot \frac{|\cK_k(\cY)\cap \cK_k(\mathfrak{S}_T)|}{t_{k-1}^k}\,.
\]
Using $\xi, d_k\le \tfrac\eps{12}$ this yields
\[
	\frac{1-\mu}{1+\mu}\cdot \bigl(2^{1-k}+\tfrac{2\eps}3\bigr)t_{k-1}^k
	\le e(\cA_\cY)\,.
\]
So an appropriate choice of $\mu$ at the beginning of the proof leads indeed 
to the desired result.
\end{proof}

\section{Towards the proof of Proposition~\ref{prop:reduced}} 
\label{sec:proof}

Up to two purely graph theoretic results deferred to later sections, 
we will give the proof of Proposition~\ref{prop:reduced} in this section.
Let us begin with a brief description of two of the ideas appearing in this proof.

\begin{enumerate}
\item[$\bullet$]
The first observation is that rather than studying the constituents of the 
reduced hypergraph $\cA$ under consideration directly,
it suffices to deal with certain bipartite graphs obtained by projection.
Essentially, finding an $F^{(k)}$ in $\cA$ amounts to the same thing as finding 
a triangle in a multipartite graph that is composed in an appropriate way 
of such bipartite projections. This step of the argument will be rendered by a ``triangle
lemma'' (see Theorem~\ref{thm:triangle} below), which roughly tells us that if a 
large number of sufficiently ``rich'' bipartite graphs interact, then they necessarily 
create a triangle.

\item[$\bullet$]  
Now irrespective of what such a triangle lemma says precisely, there arises the 
question why many of these bipartite projections will in fact be ``rich''. 
Ultimately, of course, this must be a consequence of our density assumption imposed 
on $\cA$. More precisely, we will prove a so-called ``path lemma'' 
(see Theorem~\ref{thm:path} below) stating that long concatenations of ``poor'' 
bipartite graphs will always contain fewer paths than what we would expect in view
of the density of~$\cA$. From this it will follow, e.g., 
that every constituent of~$\cA$ admits at least one ``rich'' projection. 
Once they are found, these ``rich'' projections will be assembled in a manner that is ready 
for an application of the triangle lemma by means of some Ramsey theoretic arguments.    
\end{enumerate}
   
In some sense it does not matter for the proof described in this section what the terms
``rich'' and ``poor'' used informally in the above discussion actually mean: only the 
path lemma and the triangle lemma are real. But to aid the readers orientation it might
still be helpful to say now for which such concepts we will later show that those two
statements are true. 
   
\begin{dfn} 
Let $\xi>0$ and let $G$ be a bipartite graph with fixed ordered bipartition~$(X, Y)$.
We say that $G$ is {\it $\xi$-poor} if there are at most $\xi\,|Y|$ many vertices 
$y\in Y$ for which the number of two-edge walks in $G$ starting at $y$ is larger than 
$\bl\tfrac 14+\xi\br|X|\,|Y|$. Otherwise $G$ is said to be {\it $\xi$-rich}.
\end{dfn}

Note, that these definitions concern ordered bipartitions $(X,Y)$ and hence they are not symmetric.
Moreover, the walks we consider may use one edge twice. 
This means that if $xy, xy'\in E(G)$ holds for some three vertices $x$, $y$, and $y'$
of $G$, then $yxy'$ is regarded as an two-edge walk starting at $y$ irrespective of
whether $y\ne y'$ holds or not.
  
The following result will be proved in Section~\ref{sec:path}. It will be used below 
for locating many rich graphs among the projections of the constituents of a 
${\bigl(2^{1-k}+\eps\bigr)}$-dense reduced hypergraph.

\begin{thm}[Path lemma]
\label{thm:path} 
Given $\eps>0$ and a positive integer $k$, there exists a positive real 
number~$\xi$ for which the following holds: 
If $G$ is a $k$-partite graph with nonempty vertex
classes $V_1, \ldots, V_{k}$ such that for all $r\in [k-1]$ the graph $G[V_r, V_{r+1}]$
is~$\xi$-poor, then there are less than
\[
	\bl\tfrac 1{2^{k-1}}+\eps\br\prod_{i=1}^{k}|V_i|
\]
many $k$-tuples $(v_1, \ldots, v_k)\in V_1\times\ldots\times V_{k}$ 
for which $v_1v_2\ldots v_{k}$ is a path in $G$.
\end{thm}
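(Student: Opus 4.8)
The plan is to prove the path lemma by induction on $k$, tracking the number of walks rather than paths in order to obtain a statement robust enough to push through the induction. For a $k$-partite graph $G$ with vertex classes $V_1,\dots,V_k$, let $W_k(G)$ denote the number of $k$-tuples $(v_1,\dots,v_k)$ for which $v_1v_2\cdots v_k$ is a \emph{walk} in $G$ (consecutive vertices adjacent, repetitions allowed). Since every path is a walk, it suffices to bound $W_k(G)$. The base case is $k=2$: here $W_2(G)=e(G[V_1,V_2])$ and the bound $(\tfrac12+\eps)|V_1||V_2|$ would require an extra idea — but actually for $k=2$ there is no poorness hypothesis on any pair, so one should instead take the true base case to be $k=1$, where $W_1(G)=|V_1|\le(1+\eps)|V_1|$ trivially, and then run the inductive step using poorness of $G[V_1,V_2]$.

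\textbf{Inductive step.} Suppose the bound holds for $k-1$. Given $G$ on $V_1,\dots,V_k$ with all $G[V_r,V_{r+1}]$ being $\xi$-poor, the key is to count walks $v_1\cdots v_k$ by first choosing the triple $(v_{k-2},v_{k-1},v_k)$ and then extending backwards, or more cleanly: write the number of walks as $\sum_{v_3}(\text{walks }v_3\cdots v_k\text{ ending flexibly})\times(\text{walks }v_3v_2v_1)$. The cleanest decomposition is to split at the second coordinate: the number of walks $v_1v_2\cdots v_k$ equals $\sum_{v_2\in V_2}\deg_{V_1}(v_2)\cdot N(v_2)$, where $N(v_2)$ is the number of walks $v_2v_3\cdots v_k$ in $G[V_2,\dots,V_k]$ starting at $v_2$. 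By induction applied to $G[V_2,\dots,V_k]$ (whose consecutive pairs are all $\xi$-poor), $\sum_{v_2}N(v_2)=W_{k-1}(G[V_2,\dots,V_k])<(\tfrac1{2^{k-2}}+\eps')\prod_{i=2}^k|V_i|$. One must now control the correlation between the "forward degree" $\deg_{V_1}(v_2)$ and the "backward walk count" $N(v_2)$. This is where the definition of $\xi$-poor for the pair $(V_2,V_3)$ enters: a vertex $v_2$ with many two-edge walks into $V_1$... no — rather, poorness of $(V_1,V_2)$ says few $v_2\in V_2$ have more than $(\tfrac14+\xi)|V_1||V_2|$ two-edge walks $\ast v_2\ast$ inside $G[V_1,V_2]$ (i.e.\ $\deg_{V_1}(v_2)^2$ is small for most $v_2$, after summing); wait — two-edge walks starting at $v_2$ in $G[V_1,V_2]$ number $\sum_{u\in V_1: uv_2\in E}\deg_{V_2}(u)$. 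The correct move: split at $v_1$ instead. The number of walks $=\sum_{v_1\in V_1}(\text{walks }v_1v_2\cdots v_k)$, and a walk $v_1v_2\cdots v_k$ is determined by $v_1$ together with a walk $v_2\cdots v_k$ in $G[V_2,\dots,V_k]$ whose start $v_2$ is adjacent to $v_1$.

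\textbf{The real argument.} I would instead count by the middle edge or, most robustly, use the following: by induction on the pair $(V_1,V_2)$ being $\xi$-poor, for all but $\xi|V_2|$ vertices $v_2\in V_2$ the number of two-edge walks in $G[V_1,V_2]$ starting at $v_2$ is at most $(\tfrac14+\xi)|V_1||V_2|$; summing over such $v_2$, $\sum_{v_2}(\text{2-walks from }v_2)^{1/2}$-type estimates via Cauchy--Schwarz give $\sum_{v_2}\deg_{V_1}(v_2)^2 \le (\tfrac14+\xi)|V_1|^2|V_2| + \xi|V_2|\cdot|V_1|^2$, hence $\sum_{v_2}\deg_{V_1}(v_2)^2\le(\tfrac14+2\xi)|V_1|^2|V_2|$; by Cauchy--Schwarz again $e(G[V_1,V_2])=\sum_{v_2}\deg_{V_1}(v_2)\le(\tfrac12+\sqrt{2\xi})|V_1|\sqrt{|V_1||V_2|}\cdot|V_2|^{-1/2}$... this is getting muddled. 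The honest plan: prove the \emph{stronger} statement that for each $r$, writing $w_r(v)$ for the number of walks from $V_1$ of length $r{-}1$ ending at $v\in V_r$, one has $\sum_{v\in V_r}w_r(v)^2$ or $\sum w_r(v)\le(\tfrac1{2^{r-1}}+\eps)\prod_{i\le r}|V_i|$ plus a second-moment control, and propagate both simultaneously through the recursion $w_{r+1}(v')=\sum_{v\sim v'}w_r(v)$, using $\xi$-poorness of $(V_r,V_{r+1})$ precisely to bound $\sum_{v'}\big(\sum_{v\sim v'}1\big)\big(\sum_{v\sim v'}w_r(v)\big)$ via the walk-count hypothesis. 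The main obstacle is exactly this: designing the right pair of invariants (a first-moment bound and a companion second-moment or "most vertices are typical" bound) so that the factor $\tfrac12$ per step is preserved without loss, and extracting from the $\xi$-poorness definition — which is phrased in terms of two-edge walks from $Y$-side vertices — the needed bound on $\sum_{v'\in V_{r+1}}\deg_{V_r}(v')\,w_r(v')$. Choosing $\xi$ small enough at the end (depending on $k$ and $\eps$, with $\eps$ split as $\eps/k$ per inductive level) is routine once the scheme is set up.
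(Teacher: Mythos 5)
Your proposal correctly guesses the overall shape of the argument --- induction on $k$ with a second-moment quantity propagated layer by layer via Cauchy--Schwarz --- but it stops exactly where the proof actually happens: what you call ``the main obstacle'' (how to extract, from the two-edge-walk formulation of $\xi$-poorness, the bound needed to preserve the factor $\tfrac12$ per step) is left unresolved, and none of the partial computations in your middle paragraph is correct or usable. The missing step is short but essential. Let $f(y)$ count the paths $yv_3\cdots v_k$ for $y\in V_2$ and set $g(x)=\sum_{y\in N(x)\cap V_2}f(y)$ for $x\in V_1$. Pointwise Cauchy--Schwarz gives $g(x)^2\le d(x)\sum_{y\in N(x)}f(y)^2$, and summing over $x$ and \emph{swapping the order of summation} yields
\[
	\sum_{x\in V_1}g(x)^2\le\sum_{y\in V_2}\Bigl(\sum_{x\in N(y)}d(x)\Bigr)f(y)^2=\sum_{y\in V_2}P_y\,f(y)^2\,,
\]
where $P_y$ is exactly the number of two-edge walks of $G[V_1,V_2]$ starting at $y$ --- the quantity the poorness hypothesis controls. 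Splitting $V_2$ into the at most $\xi|V_2|$ exceptional vertices (handled by the trivial bounds $P_y\le|V_1|\,|V_2|$ and $f(y)\le\prod_{i\ge3}|V_i|$) and the typical ones (where $P_y\le\bigl(\tfrac14+\xi\bigr)|V_1|\,|V_2|$) gives the factor $\tfrac14$ on the second moment, hence $\tfrac12$ per step after taking square roots. In particular only \emph{one} invariant is needed, namely $\sum_{x\in V_1}g(x)^2<\bigl(\tfrac1{2^{k-1}}+\eps\bigr)^2|V_1|\prod_{i\ge2}|V_i|^2$; the first-moment statement of the path lemma then follows by a single final Cauchy--Schwarz, $\bigl(\sum_xg(x)\bigr)^2\le|V_1|\sum_xg(x)^2$. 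Your search for ``the right pair of invariants'' is therefore chasing something that is not required, and the various attempted estimates on $\sum_{v_2}\deg_{V_1}(v_2)^2$ do not lead anywhere (as you yourself note, the two-edge-walk count from $v_2$ is $\sum_{u\in N(v_2)}\deg_{V_2}(u)$, not $\deg_{V_1}(v_2)^2$).

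There is also a directional error in your recursion. Poorness of $G[V_r,V_{r+1}]$ is defined for the \emph{ordered} bipartition $(V_r,V_{r+1})$ and controls two-edge walks starting on the $V_{r+1}$ side. Your forward scheme $w_{r+1}(v')=\sum_{v\sim v'}w_r(v)$ (walks emanating from $V_1$) would, after the Cauchy--Schwarz step, produce two-edge-walk counts anchored on the $V_r$ side, which the hypothesis does not control. The recursion must be run from $V_k$ back towards $V_1$ --- equivalently, peel off $V_1$ and apply the induction hypothesis to $G[V_2,\dots,V_k]$ with $f$ counting paths towards $V_k$ --- precisely so that the walk counts that arise are those starting at the $Y$-side of each poor pair.
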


Next we state the triangle lemma, whose proof is deferred to Section~\ref{sec:triangle}.
 
\begin{thm}[Triangle lemma] 
\label{thm:triangle}
If $m^{-1}\ll\xi$, then every $m$-partite graph $G$ with nonempty vertex classes 
$V_1, \ldots, V_m$ such that for all $i$ and $j$ with $1\le i<j\le m$ the bipartite graphs~$G[V_i, V_j]$ 
are $\xi$-rich contains a triangle.
\end{thm}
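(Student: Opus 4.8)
The plan is to prove the triangle lemma by a density-increment / Ramsey-type argument on the index set, exploiting the fact that each pairwise graph being $\xi$-rich forces many vertices to have large ``two-step neighbourhood mass'' $\ge\bl\tfrac14+\xi\br|V_i||V_j|$. I would first reformulate richness in terms of codegrees: for a bipartite graph $G[V_i,V_j]$, the number of two-edge walks starting at $y\in V_j$ is $\sum_{x\in V_i}\deg_G(x)\,\mathbbm 1[xy\in E]=\sum_{x\sim y}\deg_G(x)$, so richness says a positive $\xi$-fraction of $y$'s see a set of neighbours whose degrees sum to more than $\bl\tfrac14+\xi\br|V_i||V_j|$. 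The key point is that $\tfrac14$ is exactly the threshold that a random bipartite graph of density $\tfrac12$ would produce, and anything above it is a quantitative form of ``this bipartite graph is denser or more irregular than half-density,'' which is the obstruction a triangle-free $m$-partite graph cannot sustain across all $\binom m2$ pairs simultaneously.

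\textbf{Main steps.} First I would reduce to a weighted/regularised setting: apply (weak) Szemer\'edi regularity to each $G[V_i,V_j]$, or better, work directly with the reduced picture — pass to a large subset of indices and large subsets $W_i\subseteq V_i$ so that every pair $(W_i,W_j)$ is $\eps$-regular with some density $d_{ij}$, using a Ramsey argument to make the ``type'' of each pair (roughly, the bucket of $d_{ij}$) constant or monotone over an $\l$-subset of indices with $m^{-1}\ll\l^{-1}\ll\xi$. Second, I would translate $\xi$-richness of a regular pair of density $d_{ij}$ into a lower bound on $d_{ij}$: for an $\eps$-regular bipartite graph of density $d$, almost every $y$ has roughly $d|V_i|$ neighbours, each of degree roughly $d|V_j|$, so the walk count is $\approx d^2|V_i||V_j|$; richness up to error terms then forces $d_{ij}^2\gtrsim\tfrac14+\xi-o(1)$, i.e.\ $d_{ij}\ge\tfrac12+\xi'$ for some $\xi'>0$ depending on $\xi$. (One must be a little careful: it is not pointwise $d$ but an average, and irregularity can inflate walk counts, which is precisely why the regularisation step is needed to rule that out — in a regular pair the walk count is genuinely governed by $d^2$.) Third, with every pair among $\l$ classes now having density $\ge\tfrac12+\xi'$ between $\eps$-regular subsets of comparable size, I invoke the standard triangle counting/removal lemma for $3$-partite graphs: three pairwise $\eps$-regular pairs each of density $\ge\tfrac12+\xi'$ span a triangle once the classes are large enough and $\eps\ll\xi'$. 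Taking $m$ large enough that the regularity/Ramsey reductions leave $\l\ge3$ classes finishes the proof.

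\textbf{Main obstacle.} The delicate point is Step two: upgrading ``many vertices have walk-mass $>(\tfrac14+\xi)|X||Y|$'' into a genuine lower bound on the edge density of a regular subpair, rather than something that could be caused entirely by degree fluctuations or by a small dense spot. In a single unstructured bipartite graph, high two-walk counts at many vertices do \emph{not} by themselves imply high global density — one could have a quarter of $X$ with full degree and the rest isolated, giving density $\tfrac14$ but many $y$'s with walk-mass around $\tfrac14\cdot\tfrac14^{-1}\cdot\dots$; so the argument genuinely needs the regularity reduction to kill such configurations, and one must verify that on the $\eps$-regular part the walk-mass really is $(1\pm o(1))d^2|X||Y|$ for almost all $y$. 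I expect handling this — including the bookkeeping that the ``lost'' vertices and edges from passing to $W_i\subseteq V_i$ and from the $o(1)$ regularity errors do not erode the $+\xi$ surplus — to be the technical heart of the proof, while the final triangle-counting step is routine.
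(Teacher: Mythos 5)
There is a genuine gap, and it sits exactly where you located the ``technical heart'': the claim in your second step that $\xi$-richness forces the density $d_{ij}$ of a simultaneously chosen regular sub-pair $(W_i,W_j)$ to satisfy $d_{ij}\ge\tfrac12+\xi'$ is false, and the counterexample is essentially the one you mention yourself. Split $V_i=X_1\dcup X_2$ with $|X_1|=\bigl(\tfrac14+2\xi\bigr)|V_i|$, join $X_1$ completely to $V_j$ and put no edges at $X_2$. Every vertex of $V_j$ then starts exactly $\bigl(\tfrac14+2\xi\bigr)|V_i|\,|V_j|$ two-edge walks, so this pair is $\xi$-rich, yet its density is only $\tfrac14+2\xi$, and any regular pair $(W_i,W_j)$ with $W_i$ not essentially contained in $X_1$ has density well below $\tfrac12$. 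Regularisation does not ``kill'' this configuration --- it merely isolates $X_1$ as a piece of the partition --- and since the dense quarter $X_1=X_1(j)$ may be a different quarter of $V_i$ for each $j$, no single choice of $W_i\subseteq V_i$ can lie inside all of them; a generic $W_i$ meets each $X_1(j)$ in about a quarter of its measure, so the surviving densities stay near $\tfrac14$. At that point the triangle counting lemma gives you nothing beyond what density $\tfrac14$ gives. The correct consequence of richness is only a dichotomy: if a rich vertex $y$ has $\alpha|X|$ neighbours of average degree $\beta|Y|$, then $\alpha\beta>\tfrac14$, hence $\max(\alpha,\beta)>\tfrac12$ --- either $y$ itself has huge degree, or its neighbourhood is a (possibly small and $j$-dependent) set with large average degree back. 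Your reading of $\tfrac14$ as the signature of a half-density quasirandom pair is what sends the argument down the wrong path.

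The paper's proof exploits precisely this dichotomy and uses no regularisation at all. An iterated pigeonhole argument (Lemma~\ref{13}) selects one vertex $a_i$ per class lying in all the rich sets $X_{ij}$, and Ramsey's theorem makes the normalised degrees $|N(a_i)\cap V_j|/|V_j|$ essentially a common value $t_*\delta$. The sets $B_j=(N(a_j)\cap V_{m_{**}})\setminus\bigcup_{i<j}(N(a_i)\cap V_{m_{**}})$ are disjoint, so some $B_{j_*}$ has size at most $\delta|V_{m_{**}}|$; bounding the two-edge walk count of $a_{j_*}$ into $V_{m_{**}}$ then shows that some $x\in N(a_{i_*})\cap V_{m_{**}}$ with $i_*<j_*$ must satisfy $|N(x)\cap V_{j_*}|>(1-t_*\delta)|V_{j_*}|$, since otherwise that count would be at most $\bigl((t_*+1)\delta(1-t_*\delta)+\delta\bigr)|V_{j_*}|\,|V_{m_{**}}|\le\bigl(\tfrac14+2\delta\bigr)|V_{j_*}|\,|V_{m_{**}}|$, contradicting $a_{j_*}\in X_{j_*m_{**}}$. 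Combined with $|N(a_{i_*})\cap V_{j_*}|\ge t_*\delta|V_{j_*}|$ this produces a common neighbour $y$ and the triangle $a_{i_*}xy$. Note that $\tfrac14$ enters here as $\max_t t(1-t)$, the product of a small degree proportion with a large complementary one, not as $\bigl(\tfrac12\bigr)^2$. If you want to salvage a regularity-based proof, you would have to carry the dichotomy (either a piece of $V_j$ with density $>\tfrac12$ back to $V_i$, or vertices of enormous degree) through the Ramsey step instead of a single density parameter; as written, your reduction does not go through.
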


Now everything is in place for the main goal of the present section.

\begin{proof}[Proof of Proposition~\ref{prop:reduced} assuming Theorems~\ref{thm:path}
and \ref{thm:triangle}]
Let us start with the hierarchy
\[
	m^{-1}\ll m_*^{-1}\ll \xi\ll \eps\,.
\]
It suffices to show that any 
${\bigl(2^{1-k}+\eps\bigr)}$-dense, 
reduced $k$-uniform hypergraph $\cA$ with index set $[m]$ contains an $F^{(k)}$.
As usual we let 
\[
	\bigl\{\cP_x\colon x\in [m]^{(k-1)}\bigr\}
	\qand 
	\bigl\{\cA_y\colon y\in [m]^{(k)}\bigr\}
\]
denote the collections of vertex classes and constituents of $\cA$, respectively.

Consider an arbitrary $y\in [m]^{(k)}$ and let $y=\{i_1, \ldots, i_k\}$ list its
elements in increasing order. 
We associate with $y$ a certain $k$-partite graph $G^y$ with vertex classes 
$V^y_1, \ldots, V^y_k$, where $V^y_r=\cP_{{y-\{i_r\}}}$ for all $r\in [k]$.
The edges of $G^y$ between two consecutive vertex classes $V^y_r$ and $V^y_{r+1}$ 
with $1\le r<k$ are defined by projection as follows: 
for $a\in V^y_r$ and $b\in V^y_{r+1}$ we draw an edge between $a$ and $b$ in $G^y$ 
if and only if there is an edge of $\cA_y$ containing both $a$ and $b$. 
Now there is an obvious injective map from the edges of $\cA_y$ to the paths 
$(v_1, \ldots, v_k)\in V^y_1\times\ldots\times V^y_k$ in $G^y$, 
and hence there are at least $\bl 2^{1-k}+\eps\br\prod_{r=1}^k |V^y_r|$ such paths. 
Thus the path lemma (Theorem~\ref{thm:path}) tells us that for at least one value of $r\in[k-1]$ the bipartite 
graph $G^y[V^y_r, V^y_{r+1}]$ is $\xi$-rich. Let us denote one such possible value of 
$r$ by $h(y)$.

As the construction described in the foregoing paragraph applies to every $k$-subset~$y$
of~$[m]$, we have thereby defined a function 
\[
	h\colon[m]^{(k)}\lra [k-1]\,.
\]
Due to Ramsey's theorem and $m\gg m_*$, there exists an $m_*$-subset $Q$ of $[m]$ 
together with some $r\in [k-1]$ such that $h(y)=r$ holds for all $y\in Q^{(k)}$.
We will show in the sequel that some $z\in Q^{(k+1)}$ supports an $F^{(k)}$,
so for notational transparency we may suppose $Q=[m_*]$ from now on.   

At this moment we may already promise that the set
\[
	z^-=\{1, 2, \ldots, r-1\}\cup \{m_*+r-k+2, \ldots, m_*\}
\]
will be a subset of the desired set $z$. Since $|z^-|=k-2$,
this means that we will need to find three further indices $t_1$, $t_2$, and $t_3$ 
from the interval $J=[r, m_*+r-k+1]$ such that the set $z=z^-\cup\{t_1, t_2, t_3\}$
supports an $F^{(k)}$.

To this end we construct an auxiliary $|J|$-partite graph $G$. Its collection of 
vertex classes is going to be $\bigl\{\cP_{z^-\cup\{t\}} \colon t\in J\bigr\}$
and it remains to specify the set of edges of $G$. Notice that for any~$t_1<t_2$ from $J$ 
the $r$-th and $(r+1)$-st member of the set $z^-\cup\{t_1, t_2\}$ in its increasing 
enumeration are $t_1$ and $t_2$ respectively, whence  
$V^{z^-\cup\{t_1, t_2\}}_{r}=\cP_{z^-\cup\{t_2\}}$ and
$V^{z^-\cup\{t_1, t_2\}}_{r+1}=\cP_{z^-\cup\{t_1\}}$. 
We may thus complete the definition of $G$ by stipulating 
\[
	G\bigl[\cP_{z^-\cup\{t_1\}}, \cP_{z^-\cup\{t_2\}}\bigr]
	=G^{z^-\cup\{t_1, t_2\}}\bigl[V^{z^-\cup\{t_1, t_2\}}_{r+1}, 
									V^{z^-\cup\{t_1, t_2\}}_{r}\bigr]
\]
whenever $t_1<t_2$ are from $J$. Owing to our choice of $r$, the multipartite 
graph $G$ has the property that all its bipartite parts 
$G\bigl[\cP_{z^-\cup\{t_2\}}, \cP_{z^-\cup\{t_1\}}\bigr]$ 
with $t_1<t_2$ are \mbox{$\xi$-rich}. As we still have $|J|=m_*-k+2\gg \xi^{-1}$,
the triangle lemma is applicable to $G$. Therefore, Theorem~\ref{thm:triangle} tells us that some three vertices of $G$,
say $a_1\in \cP_{z^-\cup\{t_1\}}$, $a_2\in \cP_{z^-\cup\{t_2\}}$, and 
$a_3\in \cP_{z^-\cup\{t_3\}}$, form a triangle. Of course,
$t_1, t_2, t_3\in J$ are distinct.

Utilising Fact~\ref{fact:revision} we are now going to verify that the 
set $z=z^{-}\cup\{t_1, t_2, t_3\}$ supports an $F^{(k)}$. To this end we
set $y_i=z\setminus \{t_i\}$ for $i=1, 2, 3$. Moreover, we recall 
that the edge $a_2a_3$ of $G$ indicates that there is an edge
$e_{1}\in E\bigl(\cA_{y_1}\bigr)$ containing $a_2$ and $a_3$. 
Similarly the edges $a_1a_3$ and $a_1a_2$ of $G$ lead to certain edges 
$e_{2}$ and~$e_{3}$ with $a_1, a_3\in e_{2}\in E\bigl(\cA_{y_2}\bigr)$ and
$a_1, a_2\in e_{3}\in E\bigl(\cA_{y_3}\bigr)$, respectively. 
Due to $a_1\in e_2\cap e_3$, $a_2\in e_1\cap e_3$, and $a_3\in e_1\cap e_2$
these edges have the required properties. 
\end{proof}

\section{The path lemma} 
\label{sec:path}

In this section we are concerned with proving the path lemma. We will actually obtain 
a slightly stronger statement (see Proposition~\ref{prop:strong-path} below) that seems
to be easier to show by induction on $k$. The lemma that follows encapsulates what happens 
in the inductive step.

\begin{lemma}
\label{lem:51}
Let $\xi>0$ and $M\ge 0$ denote two real numbers. 
Suppose that 
\begin{enumerate}
\item[$\bullet$] $G$ is a $\xi$-poor bipartite graph with bipartition $(X, Y)$,
\item[$\bullet$] and that $f\colon Y\lra [0, M]$ is a function.
\end{enumerate}
Then 
\begin{enumerate}
\item[$\bullet$] the real number $a\ge 0$ with $\sum_{y\in Y}f(y)^2=|Y|\,a^2$
\item[$\bullet$] and the function $g\colon X\lra \RR$ defined by 
					$g(x)=\sum_{y\in N(x)}f(y)$ for all $x\in X$ 
\end{enumerate}
satisfy
\[
	\sum_{x\in X}g(x)^2\le \left(\bl \tfrac 14+\xi\br a^2+\xi\,M^2\right)|X|\,|Y|^2\,.
\] 
\end{lemma}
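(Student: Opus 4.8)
The plan is to expand the left-hand side as a weighted count of two-edge walks, symmetrise, and then feed in the $\xi$-poorness of $G$ directly.

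First I would write
\[
	\sum_{x\in X} g(x)^2
	= \sum_{x\in X}\Bigl(\sum_{y\in N(x)} f(y)\Bigr)^{\!2}
	= \sum_{x\in X}\ \sum_{y, y'\in N(x)} f(y)f(y')\,,
\]
which is exactly $\sum f(y)f(y')$ taken over all ordered two-edge walks $(y, x, y')$ in $G$ (degenerate walks with $y=y'$, which reuse the single edge $xy$ twice, being included — consistently with the convention behind the definition of $\xi$-poor). Applying $f(y)f(y')\le \tfrac12\bl f(y)^2+f(y')^2\br$ to each walk and using that $(y,x,y')\mapsto(y',x,y)$ is an involution on the set of such walks, one obtains
\[
	\sum_{x\in X} g(x)^2 \le \sum_{(y, x, y')} f(y)^2 = \sum_{y\in Y} f(y)^2\, W(y)\,,
\]
where $W(y)=\sum_{x\in N(y)}|N(x)|$ is the number of two-edge walks starting at~$y$.

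Next I would split $Y=Y_{\mathrm g}\dcup Y_{\mathrm b}$ according to whether $W(y)\le\bl\tfrac14+\xi\br|X|\,|Y|$ holds or not; the $\xi$-poorness of $G$ says precisely that $|Y_{\mathrm b}|\le \xi|Y|$. For $y\in Y_{\mathrm g}$ I bound $W(y)\le\bl\tfrac14+\xi\br|X|\,|Y|$ and sum, using the hypothesis $\sum_{y\in Y}f(y)^2=|Y|\,a^2$, so the contribution of the good vertices is at most $\bl\tfrac14+\xi\br a^2|X|\,|Y|^2$. For $y\in Y_{\mathrm b}$ I use only the crude bounds $W(y)\le|X|\,|Y|$ and $f(y)^2\le M^2$, so the contribution of the bad vertices is at most $M^2|X|\,|Y|\cdot|Y_{\mathrm b}|\le \xi M^2|X|\,|Y|^2$. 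Adding the two estimates gives exactly the asserted inequality.

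There is no real obstacle here; the only points that deserve a little care are the symmetrisation step (checking that walk-reversal is a genuine bijection and that the degenerate walks $y=y'$ are counted in the same way as in the definition of $\xi$-poor) and the deliberately crude bound $W(y)\le|X|\,|Y|$ on the bad vertices, which is what keeps the error term at size $\xi M^2$ rather than something larger.
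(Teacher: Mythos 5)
Your proof is correct and follows essentially the same route as the paper: both reduce to the bound $\sum_{x}g(x)^2\le\sum_{y}W(y)f(y)^2$ and then split $Y$ into good and bad vertices using $\xi$-poorness, with the crude bounds $W(y)\le|X||Y|$ and $f(y)^2\le M^2$ on the bad part. The only cosmetic difference is that the paper obtains the first inequality by applying the Cauchy--Schwarz inequality $g(x)^2\le d(x)\sum_{y\in N(x)}f(y)^2$ for each $x$, whereas you re-derive that same inequality by expanding the square and symmetrising with $f(y)f(y')\le\tfrac12(f(y)^2+f(y')^2)$.
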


\begin{proof} 
For every $x\in X$ the Cauchy-Schwarz inequality yields
\[
	g(x)^2
	=
	\bigg(\sum_{y\in N(x)}f(y)\bigg)^2
	\leq 
	d(x)\sum_{y\in N(x)}f(y)^2\,.
\]
Summing over all $x\in X$ leads to
\[
	\sum_{x\in X}g(x)^2
	\le
	\sum_{x\in X}\bigg(d(x)\sum_{y\in N(x)}f(y)^2\bigg)
	=
	\sum_{xy\in E(G)} d(x)f(y)^2=\sum_{y\in Y}\Bigl(\sum_{x\in N(y)}d(x)\Bigr)f(y)^2\,.
\]
Now for every $y\in Y$ the expression 
\[
	P_y=\sum_{x\in N(y)}d(x)
\]
counts the number of two-edge walks of $G$ starting at $y$, including degenerate ones.
With this notation the above inequality rewrites as
\[
	\sum_{x\in X}g(x)^2\le \sum_{y\in Y}P_yf(y)^2\,.
\]
The $\xi$-poorness of~$G$ tells us that the set
\[
	A=\left\{y\in Y \colon P_y> \bl\tfrac 14+\xi\br|X|\,|Y|\right\}
\]
has at most the size $\xi\,|Y|$. It is also clear that $P_y\le |X|\,|Y|$ holds for all $y\in Y$. 
Hence
\begin{align*}
	\sum_{x\in X}g(x)^2 &\le \sum_{y\in Y-A}P_yf(y)^2+ \sum_{y\in A}P_yf(y)^2 \\
	&\le \bl\tfrac 14+\xi\br|X|\,|Y|\sum_{y\in Y}f(y)^2+|A|\,|X|\,|Y|\,M^2 \\
	&\le \left(\bl \tfrac 14+\xi\br a^2+\xi\,M^2\right)|X|\,|Y|^2\,,
\end{align*}
which is what we wanted to show.
\end{proof}

\begin{prop}
\label{prop:strong-path}
Given $\eps>0$ and a positive integer $k$, 
there exists some $\xi>0$ with the following property: 
let $G$ be a $k$-partite graph with nonempty vertex classes $V_1, \ldots, V_{k}$ 
such that $G[V_r, V_{r+1}]$ is $\xi$-poor for all $r\in [k-1]$. 
Denote for each $x\in V_1$ the number of $(k-1)$-tuples 
$(v_2, \ldots, v_{k})\in V_2\times\ldots\times V_{k}$ such that $xv_2\ldots v_{k+1}$ 
is a path in $G$ by $g(x)$. Then
\[
	\sum_{x\in V_1}g(x)^2< \bl\tfrac 1{2^{k-1}}+\eps\br^2|V_1|\prod_{i=2}^{k}|V_i|^2 
\]
holds.
\end{prop}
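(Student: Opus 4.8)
The plan is to prove Proposition~\ref{prop:strong-path} by induction on $k$, with Lemma~\ref{lem:51} carrying the inductive step. The base case $k=1$ will be vacuous: there is no poorness hypothesis, every $x\in V_1$ satisfies $g(x)=1$ since the single vertex $x$ is a trivial path, and $\sum_{x\in V_1}g(x)^2=|V_1|<(1+\eps)^2|V_1|$. For $k\ge 2$ I would assume the statement for $k-1$ with an arbitrary positive error parameter, and fix (in a manner to be determined at the end) an auxiliary $\eps'>0$ depending only on $\eps$ and $k$; the induction hypothesis then supplies a threshold $\xi'=\xi'(\eps',k-1)>0$. Given a $k$-partite graph $G$ with nonempty classes $V_1,\dots,V_k$ in which every $G[V_r,V_{r+1}]$ is $\xi$-poor, where $\xi\le\xi'$ will be pinned down later, I would pass to $G'=G[V_2\cup\dots\cup V_k]$ and, for $y\in V_2$, let $f(y)$ be the number of $(k-2)$-tuples $(v_3,\dots,v_k)\in V_3\times\dots\times V_k$ with $yv_3\dots v_k$ a path in $G'$. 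Since every bipartite graph $G'[V_s,V_{s+1}]$ with $s\in\{2,\dots,k-1\}$ agrees with $G[V_s,V_{s+1}]$, hence is $\xi$-poor and a fortiori $\xi'$-poor, the induction hypothesis applied to $G'$ gives
\[
	\sum_{y\in V_2}f(y)^2<\bl 2^{-(k-2)}+\eps'\br^2\,|V_2|\prod_{i=3}^{k}|V_i|^2\,.
\]

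Next I would set $M=\prod_{i=3}^{k}|V_i|$, so that $f$ maps $V_2$ into $[0,M]$, and record the factorisation $g(x)=\sum_{y\in N(x)}f(y)$ for $x\in V_1$, where $N(x)=N_G(x)\cap V_2$ is the neighbourhood of $x$ in the bipartite graph $G[V_1,V_2]$; this holds because a path $xv_2\dots v_k$ in $G$ is exactly an edge $xv_2$ with $v_2\in V_2$ together with a path $v_2v_3\dots v_k$ in $G'$, the vertices being automatically distinct as they lie in different classes. Applying Lemma~\ref{lem:51} to the $\xi$-poor bipartite graph $G[V_1,V_2]$ with bipartition $(V_1,V_2)$ and this function $f$, with $a\ge0$ defined through $\sum_{y\in V_2}f(y)^2=|V_2|a^2$, then yields
\[
	\sum_{x\in V_1}g(x)^2\le\bll\bl\tfrac14+\xi\br a^2+\xi M^2\brr\,|V_1|\,|V_2|^2\,.
\]
The displayed consequence of the induction hypothesis reads precisely $a^2<\bl 2^{-(k-2)}+\eps'\br^2 M^2$, so, since $|V_2|^2M^2=\prod_{i=2}^{k}|V_i|^2$, I would obtain
\[
	\sum_{x\in V_1}g(x)^2<\bll\bl\tfrac14+\xi\br\bl 2^{-(k-2)}+\eps'\br^2+\xi\brr\,|V_1|\prod_{i=2}^{k}|V_i|^2\,.
\]

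It then remains to choose $\eps'$ and $\xi$ so that the bracketed constant falls below $\bl 2^{-(k-1)}+\eps\br^2$, and the reason this works is the exact agreement of the leading terms, $\tfrac14\bl 2^{-(k-2)}\br^2=\bl 2^{-(k-1)}\br^2$. Expanding, the bracket equals $\bl 2^{-(k-1)}\br^2+\tfrac12\cdot2^{-(k-2)}\eps'+\tfrac14(\eps')^2+\xi\bl(2^{-(k-2)}+\eps')^2+1\br$, whereas $\bl 2^{-(k-1)}+\eps\br^2=\bl 2^{-(k-1)}\br^2+2^{-(k-2)}\eps+\eps^2$; hence it suffices to first take $\eps'$ small enough that $\tfrac12\cdot2^{-(k-2)}\eps'+\tfrac14(\eps')^2<\tfrac12\cdot2^{-(k-2)}\eps$, and then $\xi\le\xi'(\eps',k-1)$ small enough that $\xi\bl(2^{-(k-2)}+\eps')^2+1\br<\tfrac12\cdot2^{-(k-2)}\eps$, which closes the induction. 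I do not expect a genuine obstacle here: the only point that needs care is this order of quantifiers, namely that the induction hypothesis must be invoked with $\eps'$ fixed \emph{before} $\xi$ and depending on $\eps$ and $k$ alone; conceptually the whole argument rests on the factorisation $g(x)=\sum_{y\in N(x)}f(y)$, on Lemma~\ref{lem:51}, and on the arithmetic identity $\tfrac14\cdot4^{-(k-2)}=4^{-(k-1)}$ that keeps the estimate tight.
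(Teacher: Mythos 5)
Your proposal is correct and follows essentially the same route as the paper: induction on $k$ with Lemma~\ref{lem:51} driving the inductive step via the factorisation $g(x)=\sum_{y\in N(x)\cap V_2}f(y)$ and the identity $\tfrac14\cdot 4^{-(k-2)}=4^{-(k-1)}$. The only (immaterial) difference is that you shrink the error parameter to an auxiliary $\eps'$ before invoking the induction hypothesis, whereas the paper keeps $\eps$ fixed throughout the induction and absorbs the cross term using $\bigl(\tfrac14+\xi\bigr)\bigl(\tfrac1{2^{k-2}}+\eps\bigr)^2=(1+4\xi)\bigl(\tfrac1{2^{k-1}}+\tfrac\eps2\bigr)^2$.
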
  

\begin{proof}
For fixed $\eps$ we argue by induction on $k$. In the base case $k=1$ the graph $G$ 
just consists of the independent set $V_1$, the function $g$ is constant attaining 
always the value~$1$, and thus our assertion is trivially valid for any $\xi>0$.

Now let $k\ge 2$ and suppose that the proposition is already known for $k-1$ in place of~$k$, 
say with $\xi'$ in place of $\xi$. Depending on $k$, $\eps$, and $\xi'$ we let $\xi>0$ be so 
small that 
\[
	\xi\le \xi'
	\qand 
	(1+4\xi)\bl\tfrac 1{2^{k-1}}+\tfrac\eps 2\br^2+\xi<\bl\tfrac 1{2^{k-1}}+\eps\br^2
\]
hold.

To see that $\xi$ is as desired, let the $k$-partite graph $G$ with vertex classes 
$V_1, \ldots, V_{k}$ and the function $g$ be as described above. 
For each $y\in V_2$ we write $f(y)$ for the number of $(k-2)$-tuples 
$(v_3, \ldots, v_{k})\in V_3\times\ldots\times V_{k}$ such that $yv_3\ldots v_{k}$ 
is a path in $G$. 
Clearly we have 
\[
	g(x)=\sum_{y\in N(x)\cap V_2}f(y)
\]
for each $x\in V_1$. Moreover, the number 
\[
	M=\prod_{i=3}^{k}|V_i|
\]
satisfies $f(y)\le M$ for all $y\in V_2$. 
We may thus apply Lemma~\ref{lem:51} to the bipartite graph called $G[V_1, V_2]$ here
in place of $G$ there. This tells us that for the real number $a\ge 0$ defined by 
\begin{equation} \label{eq:intro-a}
	\sum_{y\in V_2}f(y)^2=|V_2|\,a^2
\end{equation}
we have 
\begin{equation} \label{eq:more-a}
	\sum_{x\in V_1}g(x)^2\le \left(\bl \tfrac 14+\xi\br a^2+\xi\,M^2\right)|V_1|\,|V_2|^2\,.
\end{equation}
Owing to $\xi\le \xi'$ the induction hypothesis yields 
\[
	\sum_{y\in V_2}f(y)^2<\bl\tfrac 1{2^{k-2}}+\eps \br^2|V_2|\prod_{i=3}^{k}|V_i|^2\,,
\]
which in combination with~\eqref{eq:intro-a} leads to
\[
a< \bl\tfrac 1{2^{k-2}}+\eps \br M\,.
\]
Plugging this into~\eqref{eq:more-a} we learn
\begin{align*}
	\sum_{x\in V_1}g(x)^2 
	&<\left(\bl \tfrac 14+\xi\br \bl\tfrac 1{2^{k-2}}
	+\eps \br^2+\xi\right) |V_1|\,|V_2|^2\, M^2\\
	&=\left( (1+4\xi)\bl\tfrac 1{2^{k-1}}+\tfrac\eps 2 \br^2+\xi\right) |V_1|\,|V_2|^2\, M^2
\end{align*}
and using the choice of $\xi$ again we obtain the desired conclusion.
\end{proof}

The following is easy by now.

\begin{proof}[Proof of Theorem~\ref{thm:path}]
Given $\eps$ and $k$ we take $\xi$ to be the number delivered by the foregoing proposition. 
Consider a $k$-partite graph $G$ with vertex classes $V_1, \ldots, V_{k}$ such that 
$G[V_r, V_{r+1}]$ is $\xi$-poor for all $r\in[k-1]$. Let the function $g$ be defined as in 
Proposition~\ref{prop:strong-path}. Then the number of $k$-vertex paths in $G$ we are to 
bound from above may be written as $\sum_{x\in V_1}g(x)$. Now we have just proved 
\[
	\sum_{x\in V_1}g(x)^2< \bl\tfrac 1{2^{k-1}}+\eps\br^2|V_1|\prod_{i=2}^{k}|V_i|^2 
\]
and in view of the inequality
\[
	\left(\sum_{x\in V_1}g(x)\right)^2\le |V_1| \sum_{x\in V_1}g(x)^2	
\]
this yields indeed
\[
	\sum_{x\in V_1}g(x)< \bl\tfrac 1{2^{k-1}}+\eps\br\prod_{i=1}^{k}|V_i|\,. \qedhere
\]
\end{proof}

\section{The triangle lemma} 
\label{sec:triangle}

The last promise we need to fulfill is to prove Theorem~\ref{thm:triangle}.
This will in turn be prepared by the following statement.

\begin{lemma} \label{13}
Given a real number $\delta\in(0, 1)$ and integers $m\ge k\ge 0$ there exists a 
positive integer $M=F(\delta, k,m)$ with the following property: 
suppose that we have 
\begin{enumerate}[label=\rmlabel]
\item\label{it:61i} finite nonempty sets $A_1, \ldots, A_M$,
\item\label{it:61ii} and subsets $X_{ij}\subseteq A_i$ with 
                 $| X_{ij}| \ge \delta\,|A_i|$ for $1\le i<j\le M$.
\end{enumerate}
Then there are indices $1\le n_1<\ldots<n_m\le M$ and elements 
$a_1\in A_{n_1}, \ldots, a_k\in A_{n_k}$ such that 
\[
	a_i\in \bigcap_{j\in (i, m]}X_{n_i n_j}
\]
holds for all $i\in [k]$.  
\end{lemma}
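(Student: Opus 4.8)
The plan is to prove Lemma~\ref{13} by induction on $k$, regarding $\delta$ and $m$ (subject to $m \ge k$) as parameters. The base case $k = 0$ is immediate: taking $n_i = i$ for $i \in [m]$ works whenever $M \ge m$, and since no elements $a_i$ are required the sets $X_{ij}$ play no role, so one may set $F(\delta, 0, m) = m$.

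For the inductive step, assume the assertion with $k-1$ in place of $k$ (for every $m' \ge k-1$), providing a bound $F(\delta, k-1, \cdot)$. Given the data $A_1, \dots, A_M$ and $(X_{ij})_{i<j}$, I would commit to $n_1 = 1$ and hunt for a popular element of $A_1$: double counting over $j \in \{2, \dots, M\}$ yields
\[
  \sum_{a \in A_1} \bigl|\{\, j : 2 \le j \le M,\ a \in X_{1j}\,\}\bigr|
  = \sum_{j=2}^{M} |X_{1j}|
  \ge \delta\,(M-1)\,|A_1|\,,
\]
so some $a_1 \in A_1$ lies in at least $\delta(M-1)$ of the sets $X_{1j}$. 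Put $J = \{\, j : 2 \le j \le M,\ a_1 \in X_{1j}\,\}$, so $|J| \ge \delta(M-1)$. The idea is then to recurse inside $J$: the subfamily $(A_j)_{j \in J}$ together with $(X_{ij})_{i<j,\, i,j\in J}$ still satisfies $|X_{ij}| \ge \delta|A_i|$, and $m-1 \ge k-1$, so provided $|J| \ge F(\delta, k-1, m-1)$ the induction hypothesis produces indices $n_2 < \dots < n_m$ in $J$ and elements $a_2 \in A_{n_2}, \dots, a_k \in A_{n_k}$ with $a_i \in \bigcap_{j\in(i,m]} X_{n_i n_j}$ for $2 \le i \le k$. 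Since $1 = n_1 < n_2 < \dots < n_m$ and every $n_j \in J$ satisfies $a_1 \in X_{1 n_j} = X_{n_1 n_j}$, the element $a_1$ settles the case $i = 1$, so $(n_1, \dots, n_m)$ and $(a_1, \dots, a_k)$ are as required. To legitimise the recursion it suffices that $\delta(M-1) \ge F(\delta, k-1, m-1)$, hence one may take $F(\delta, k, m) = \lceil 1 + \delta^{-1} F(\delta, k-1, m-1) \rceil$.

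This is essentially a pigeonhole step iterated $k$ times, so I do not anticipate a genuine obstacle; the only points that need care are the bookkeeping. Specifically, one should check that the recursively produced indices automatically exceed $n_1 = 1$ (true because $J \subseteq \{2, \dots, M\}$), and that the hypotheses $|X_{ij}| \ge \delta|A_i|$ and $m - 1 \ge k - 1$ are inherited by the subfamily indexed by $J$. It is also worth noting at the outset that $|X_{ij}| \ge \delta|A_i| \ge \delta > 0$ forces every $X_{ij}$ to be nonempty, so the double-counting step above is not vacuous.
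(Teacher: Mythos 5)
Your argument is correct, and it is a genuinely different (though closely related) way of organising the induction. Both you and the paper induct on $k$ with a double-counting step, but the recursions run in opposite directions. You peel off the \emph{first} element before recursing: averaging over $A_1$ yields $a_1$ together with a set $J\subseteq\{2,\dots,M\}$ of size at least $\delta(M-1)$ with $a_1\in X_{1j}$ for all $j\in J$, and the induction hypothesis with parameters $(k-1,m-1)$ is then applied to the subfamily indexed by $J$ (whose hypotheses are trivially inherited); the cost appears as a $\delta^{-1}$ blow-up of $M$, giving $F(\delta,k,m)=\lceil 1+\delta^{-1}F(\delta,k-1,m-1)\rceil$. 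The paper instead applies the induction hypothesis \emph{first}, with the same $M$ but an inflated target $m'=k+1+\lceil(m-k-1)/\delta\rceil$, obtaining $n_1<\dots<n_{m'}$ and $a_1,\dots,a_k$, and only afterwards finds $a_{k+1}\in A_{n_{k+1}}$ by averaging over the sets $X_{n_{k+1}n_j}$, $k+2\le j\le m'$, keeping only $m-k-1$ of the trailing indices; there the cost appears as the inflation of $m$. The two recursions give bounds of the same order (roughly $\delta^{-k}m$), and yours is arguably slightly cleaner since no trailing indices need to be discarded and one need not check that the intersection conditions survive the discarding. The bookkeeping points you single out — that the recursively produced indices lie in $J\subseteq\{2,\dots,M\}$ and hence exceed $n_1=1$, and that the integer $|J|\ge\delta(M-1)$ reaches $F(\delta,k-1,m-1)$ for your choice of $M$ — are exactly the ones requiring verification, and they all go through.
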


\begin{proof}
We argue by induction on $k$. In the base case $k=0$ we set $F(\delta, 0, m)=m$. 
Then we may always take $n_i=i$ for all $i\in [m]$ because there are no further 
choices to make or conditions to meet. 

Suppose that the result is already known for some integer $k$ and all relevant 
combinations of $\delta$ and $m$. 
Now if a a real number $\delta\in(0,1)$ and an integer $m\ge k+1$ are given,
we set
\[
	m'=k+1+\left\lceil \frac{m-k-1}{\delta}\right\rceil 
	\quad \text{ and then } \qquad 
	M=F(\delta, k+1, m)=F(\delta, k, m')\,.
\]

Intending to verify that $M$ has the desired property, we consider any sets 
$A_i$ and~$X_{ij}$ obeying the above clauses~\ref{it:61i} and~\ref{it:61ii}. 
Owing to the definition of~$M$, there exist indices $1\le n_1<\ldots<n_{m'}\le M$ and 
elements $a_1\in A_{n_1}, \ldots, a_k\in A_{n_k}$ such that 
\[
	a_i\in  \bigcap_{j\in (i, m']} X_{n_i n_j}
\]
holds for all $i\in [k]$. 
The estimates from~\ref{it:61ii} yield
\[
	(m'-k-1) |A_{n_{k+1}}|\,\delta\le \sum_{j=k+2}^{m'}|X_{n_{k+1} n_{j}}|\,.
\]
So by double counting there is an element $a_{k+1}\in A_{n_{k+1}}$ for which the set 
\[
	Q=\{j\in [k+2, m']\colon  a_{k+1}\in X_{n_{k+1} n_{j}}\}
\]
satisfies $| Q| \ge \delta(m'-k-1)$. By our choice of $m'$ this implies
$| Q| \ge m-k-1$ and thus we may select some numbers~ 
$\ell(k+2)< \ldots< \ell(m)$ from $Q$. 
Now it is not hard to check that the indices 
$n_1<\ldots <n_{k+1}<n_{\ell(k+2)}< \ldots< n_{\ell(m)}$ 
as well as the elements $a_1, \ldots, a_{k+1}$ satisfy the conclusion of our lemma.
\end{proof}

We may now conclude the proof of our main result by showing the triangle lemma.

\begin{proof}[Proof of Theorem~\ref{thm:triangle}] 
For notational reasons it is slightly preferable to assume that for $1\le i<j\le m$ 
the graph $G[V_j, V_i]$ rather than $G[V_i, V_j]$ is $\xi$-rich. This change of hypothesis 
is allowed by symmetry, i.e., since we may read the original sequence of sets 
$V_1, \ldots, V_m$ backwards. It will also be convenient to write $G_{ij}$ in place of 
$G[V_i, V_j]$ whenever $1\le i<j\le m$. 

Now the assumption means that for $1\le i<j\le m$ the set $X_{ij}$ consisting of all 
those vertices $v\in V_i$ at which more than $\bl \tfrac14+\xi\br |V_i|\,|V_j|$ 
two-edge walks of $G_{ij}$ start satisfies $|X_{ij}|>\xi\,|V_i|$. 

The arguments that follow will rely on the hierarchy 
\[
	m^{-1}\ll m_*^{-1}\ll m_{**}^{-1}\ll \delta\ll \xi\,,
\]
where for transparency we assume that $\delta^{-1}$ is an integer.
The first step is to apply the previous lemma, using $m\ge F(\xi, m_*, m_*)$. 
Upon a relabeling of indices this yields some vertices $a_i\in V_i$ for $1\le i\le m_*$ 
such that 
\[
	a_i\in\bigcap_{j\in (i, m_*]}X_{ij}
\]
holds for all $i\in [m_*]$. As we shall see, there is a triangle in $G$ whose vertices 
are from $V_1\cup\ldots\cup V_{m_*}$.

Next we consider a function 
\[
	t\colon [m_*]^{(2)}\longrightarrow \bigl[\delta^{-1}\bigr]\,,
\]
with the property that for $1\le i<j\le m_*$ the integer $t=t(i,j)$ satisfies
\[
	| N(a_i)\cap V_j| \in [t, t+1]\cdot \delta\,|V_j|\,.
\]
Ramsey's theorem allows us to assume by another relabeling of indices that $t$ is constant 
on $[m_{**}]^{(2)}$, attaining always the same value $t_*$, say. From now on we intend to 
exhibit a triangle with two vertices from $V_1\cup\ldots\cup V_{m_{**}-1}$ and one vertex 
from $V_{m_{**}}$.

For this purpose, we will consider for $1\le j<m_{**}$ the sets
\[
	A_j=N(a_j)\cap V_{m_{**}} \quad \text{ and } \quad B_j=A_j-\bigcup_{1\le i<j}A_i\,.
\]
Since $B_1, B_2, \ldots, B_{m_{**}-1}$ are mutually disjoint subsets  of $V_{m_{**}}$ and 
$m_{**}\gg \delta^{-1}$, there is an index $j_*$ with 
$| B_{j_*}|\le \delta\,| V_{m_{**}}|$. 

In order to find the desired triangle we will first assume that there exists an 
index $i_*<j_*$ together with a vertex $x\in A_{i_*}$ such that 
$|N(x)\cap V_{j_*}| > (1-t_*\delta)|V_{j*}|$ holds. 
Due to the choice of $t_*$ we also have 
$|N(a_{i_*})\cap V_{j_{*}}| \ge t_*\,\delta\,| V_{j_{*}}|$. 
The addition of both estimates yields
\[
	|N(x)\cap V_{j_*}| +| N(a_{i_*})\cap V_{j_{*}}| > |V_{j_{*}}|\,,
\]
and thus there is a common neighbour $y\in V_{j_*}$ of $a_{i_{*}}$ and $x$. 
Now $a_{i_{*}}x$ is an edge of $G$ as well, because $x\in A_{i_*}$. 
So altogether $a_{i_*}xy$ is a triangle in $G$.

To finish the argument we will now prove that indeed there always exists a vertex 
${x\in \bigcup_{1\le i<j_{*}}A_i}$ with $| N(x)\cap V_{j_*}|> (1-t_*\delta)|V_{j*}|$. 
If this were not the case, we could estimate the number $\Omega$ of two-edge walks 
in $G_{j_*m_{**}}$ that start at $a_{j_*}$ by
\[
	\Omega=\sum_{x\in A_{j_*}}| N(x)\cap V_{j^*}| \le 
	| A_{j_*}| \cdot (1-t_*\delta)| V_{j*}| 
	+| B_{j_*}| \cdot |V_{j_{*}}| \,.
\]
Because of $|A_{j_*}|\le (t_*+1)\delta\,|V_{m_{**}}|$ and $|B_{j_*}|\le \delta\,|V_{m_{**}}|$,
this leads to
\[
	\Omega\le \bigl((t_*+1)\delta\cdot(1-t_*\delta)+\delta\bigr)|V_{j_*}|\,|V_{m_{**}}|\,.
\]
On the other hand $a_{j_*}\in X_{j_*m_{**}}$ implies 
$\Omega>\bl\tfrac14+\xi\br|V_{j_*}|\,|V_{m_{**}}|$,  
so that altogether we obtain
\[
\tfrac14+\xi< (t_*+1)\delta\cdot(1-t_*\delta)+\delta\,.
\]
But in view of $t_*\delta\cdot(1-t_*\delta)\le \tfrac14$ this entails 
$\xi<2\delta$, which contradicts the hierarchy imposed above. 
\end{proof}
 
\section{Concluding Remarks}
\label{sec:conclude}

\subsection{An ordered version of the three edge theorem}
In~\cite{RRS-a} we actually obtained slightly more than just 
$\pivvv\bl K_4^{(3)-}\br\le\tfrac 14$. We also proved that for $n^{-1}\ll\eta\ll\eps$
every $\bl\tfrac 14+\eps, \eta, \VVV\br$-dense $3$-uniform hypergraph with an ordered vertex 
set of size $n$ contains a~$K_4^{(3)-}$ whose vertex of degree $3$ occurs either in the 
first or last position. In other words, its three vertices of degree $2$ appear 
consecutively. More generally, an $F^{(k)}$ has three vertices of degree $2$, while all other 
vertices have degree $3$, and our proof of Theorem~\ref{thm:main} can be modified to show 
the following result.

\begin{thm}
For $n^{-1}\ll\eta\ll\eps$ every $(2^{1-k}+\eps, \eta, k-2)$-dense $k$-uniform hypergraph $H$
with vertex set $[n]$ contains an $F^{(k)}$ with the additional property that its three vertices
of degree $2$ appear at consecutive positions.
\end{thm}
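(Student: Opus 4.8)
The plan is to re-run the proof of Theorem~\ref{thm:main} while retaining the linear order on the vertex set, and the key observation is that its combinatorial heart, Proposition~\ref{prop:reduced}, already produces a copy of~$F^{(k)}$ of the desired shape. Indeed, in the proof of Proposition~\ref{prop:reduced} the $(k+1)$-set $z=z^-\cup\{t_1,t_2,t_3\}$ supporting an~$F^{(k)}$ satisfies $z^-\subseteq\{1,\dots,r-1\}\cup\{m_*+r-k+2,\dots,m_*\}$ with $t_1<t_2<t_3$ in the middle block $J=[r,m_*+r-k+1]$, and the resulting copy (obtained via the embedding lemma, Theorem~\ref{thm:EL}) has its three edges on the classes $z\setminus\{t_1\}$, $z\setminus\{t_2\}$, $z\setminus\{t_3\}$ respectively. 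Hence the $k-2$ degree-$3$ vertices of the copy lie in the vertex classes indexed by~$z^-$ and the three degree-$2$ vertices lie in the classes indexed by $t_1$, $t_2$, $t_3$. Consequently, as soon as the index set of the reduced hypergraph is ordered \emph{consistently} with the order on~$V(H)$ --- meaning that a larger index forces a later position --- every degree-$3$ vertex of the copy precedes~$v_{t_1}$ (if its index lies in $\{1,\dots,r-1\}$) or follows~$v_{t_3}$ (if its index lies in $\{m_*+r-k+2,\dots,m_*\}$), so the three degree-$2$ vertices $v_{t_1}$, $v_{t_2}$, $v_{t_3}$ occupy three consecutive positions of the copy.

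It therefore remains only to modify the passage from~$H$ to the reduced hypergraph in the proof of Proposition~\ref{prop:main} so that this order-consistency is achieved. First I would reduce, via Proposition~\ref{prop:technical} as before, to the case that $H$ is $\bigl(2^{1-k}+\eps,\eta,[k]^{(k-2)}\bigr)$-dense; for $k\ge4$ this does not touch the vertex set, for $k=3$ it only replaces $H$ by an induced subhypergraph~$H[U]$, which inherits the order of~$[n]$ (so a copy whose degree-$2$ vertices are consecutive in~$U$ is also consecutive in~$[n]$), and $k=2$ is trivial since any triangle has all three vertices consecutive. Next, when invoking the regularity lemma (Theorem~\ref{thm:RL}) I would fix a large constant $N_0$ with $m\le N_0$, partition~$[n]$ into $N_0$ consecutive intervals of near-equal size, and apply the regularity lemma so that the resulting partition $\cP^{(1)}$ refines this interval partition; then each class $V_i$ lies in a unique interval and, by the near-equitability of the $|V_i|$, each interval contains about $t_1/N_0$ classes. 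Finally, when selecting the independent set $\cM$ in the auxiliary ``bad'' graph~$\cB$ on~$\cP^{(1)}$ in Step~3 of that proof, I would additionally insist that $\cM$ meet each interval in at most one class: a uniformly random choice of one class from each of the $N_0$ intervals spans in expectation at most $2\xi\binom{N_0}{k}$ edges of~$\cB$ (an edge of~$\cB$ can only be spanned when all $k$ of its classes lie in distinct intervals, which happens with probability about $(N_0/t_1)^k$), so some such choice spans at most that many edges, and, since $\xi\ll m^{-1}$, among these $N_0$ classes one finds an independent set of size~$m$. Relabelling $\cM=\{1<\dots<m\}$ in the order induced by the intervals makes the index set of the reduced hypergraph~$\cA$ order-consistent, and the remainder of the proof of Proposition~\ref{prop:main} together with the entire proof of Proposition~\ref{prop:reduced}, the path lemma (Theorem~\ref{thm:path}) and the triangle lemma (Theorem~\ref{thm:triangle}) --- all insensitive to the order --- go through verbatim.

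The only genuinely new ingredient is this ordered selection step, and that is also where I expect the (minor) difficulties to lie: one must check that the hypergraph regularity lemma can be arranged to refine a prescribed vertex partition, which is standard but deserves a remark, and one must slot $N_0$ and $\xi$ into the existing constant hierarchy so that the transversal count above indeed yields an independent $m$-set while $N_0$ stays bounded (being a constant, $t_1\ge N_0$ holds automatically and $T_0$ may be taken to depend on~$N_0$). No other part of the argument requires any change.
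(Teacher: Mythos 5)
Your proposal is correct and follows essentially the same route as the paper, which only sketches this argument: the paper likewise rests on the observation that $t_1,t_2,t_3$ occupy consecutive positions in the increasing enumeration of $z^-\cup\{t_1,t_2,t_3\}$ and on arranging the regularity lemma so that $\cP^{(1)}$ refines a partition of $[n]$ into consecutive intervals. Your additional step of choosing $\cM$ as a partial interval-transversal (so that the index set of $\cA$ is ordered consistently with $[n]$) is exactly the kind of detail the paper leaves to the reader, and your counting for it is sound.
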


The key observation one needs for showing this is that in the proof of 
Proposition~\ref{prop:reduced} the indices $t_1$, $t_2$, and $t_3$ 
appear consecutively in the increasing 
enumeration of $z^{-}\cup\{t_1, t_2, t_3\}$. To make use of this fact, we need to 
start from a regular partition of $H$ whose vertex partition refines a partition 
into many consecutive intervals, and the iterated refinement strategy on which the 
proof of the hypergraph regularity lemma relies 
allows us to obtain this. The full argument would be very similar 
to~\cite{RRS-a} and we leave the details to the reader.

\subsection{Relaxing the density condition}\label{subsec:exmp}
Generalising a construction due to Leader and Tan~\cite{LeTa10} we will now prove 
that at least when $k$ is divisible by $4$ the three edge theorem cannot be improved
by replacing $\pi_{k-2}$ by $\pi_{k-3}$.

\begin{prop}\label{prop:4k}
If $4\mid k$, then $\pi_{k-3}\bigl(F^{(k)}\bigr)\ge 2^{2-k}$.
\end{prop}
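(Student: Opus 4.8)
The plan is to mimic and generalise the Leader–Tan construction, which for $k=4$ produces a $(2^{1-k}\cdot 2)$-dense hypergraph that is $F^{(k)}$-free. The object we build will be a random $k$-uniform hypergraph on $[n]$ obtained from a pair of independent higher order tournaments together with a balanced random bipartition, in such a way that an edge is present only on $k$-sets on which the two tournaments ``agree'' in an appropriate antisymmetric sense. Concretely, first I would fix a uniformly random balanced $2$-colouring $c\colon[n]\to\{0,1\}$, and then two independent uniformly random $(k-1)$-uniform tournaments $T^{(1)}, T^{(2)}$ on $[n]$. Using these I would define a $k$-set $e$ to be an edge of $H$ when a condition of the shape ``$e$ is an edge of $H(T^{(1)})$ and $c$ restricted to $e$ is a constant'' OR ``$e$ is an edge of $H(T^{(2)})$ and $c$ is non-constant on $e$'' (the precise split will be dictated by the computation that makes the $F^{(k)}$-freeness work, following Leader–Tan for $k=4$, and this is where $4\mid k$ enters, ensuring the relevant parity counts line up). The point of doubling is that a $k$-set now has \emph{two} independent chances (one from each tournament, in the respective colour regime) to be an edge, roughly doubling the density to $2^{2-k}$.

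Next I would verify the $F^{(k)}$-freeness. An $F^{(k)}$ lives on $k+1$ vertices $w\cup\{a,b,c\}$ with $|w|=k-2$, and its three edges are $w\cup\{a,b\}$, $w\cup\{a,c\}$, $w\cup\{b,c\}$. For each such triple of $k$-sets one considers the colouring pattern of $c$ on $w\cup\{a,b,c\}$; a short case analysis on whether $c$ is constant on all of $w$, and on the colours of $a,b,c$, shows that at least two of the three edges must fall into the \emph{same} tournament regime (either both governed by $T^{(1)}$ with constant colour, or both governed by $T^{(2)}$ with non-constant colour). But by Fact~\ref{fact:Fk-free} applied to the relevant single tournament, two of the three $F^{(k)}$-edges sharing the $(k-2)$-set $w$ cannot simultaneously be edges of $H(T^{(i)})$ — the argument via vanishing of the $(k-2)$-nd homology of the $(k-1)$-simplex goes through verbatim. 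This contradiction shows $H$ is $F^{(k)}$-free deterministically, for \emph{every} choice of $c$, $T^{(1)}$, $T^{(2)}$ — which is exactly why the construction is robust.

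For the density lower bound I would argue as in Lemma~\ref{lem:Hdense}. Fix any $(k-3)$-uniform hypergraph $G^{(k-3)}$ on $[n]$ and consider $\cK_k(G^{(k-3)})$. For a fixed $k$-set $y$ in this clique collection, the probability that $y\in E(H)$ is $2^{2-k} - o(1)$: conditioning on $c|y$, if $c$ is constant on $y$ then $y$ is an edge iff $y\in H(T^{(1)})$, probability $2^{1-k}$, and the symmetric statement holds in the non-constant case, so unconditionally the probability is $2^{1-k}$ from the tournament that is ``active'' for the realised colour pattern — wait, this needs the split to be set up so that a $k$-set gets a chance from \emph{exactly one} regime with probability near $1$; re-examining, the doubling to $2^{2-k}$ comes from arranging that the colour pattern of $y$ puts it in the active regime of \emph{each} tournament with comparable probability, so that the two independent $2^{1-k}$-chances combine, and here again the divisibility $4\mid k$ is used to make the combinatorics of colour patterns symmetric. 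Then a two-level concentration argument — first Azuma–Hoeffding over the $\binom{n}{k-1}$ tournament edges of each $T^{(i)}$ (Lipschitz constant $n$), then a Chernoff/Azuma bound over the $n$ colour choices (Lipschitz constant $O(n^{k-1})$) — shows that for each fixed $G^{(k-3)}$ the bad event $|\cK_k(G^{(k-3)})\cap E| < 2^{2-k}|\cK_k(G^{(k-3)})| - \eta n^k$ has probability $e^{-\Omega(n^{k-2})}$, and since there are only $e^{O(n^{k-3})}$ choices of $G^{(k-3)}$, a union bound finishes it. (If $\cK_k(G^{(k-3)})$ is too small, $\eta n^k$ already dominates and there is nothing to prove.)

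The main obstacle I anticipate is not the probabilistic estimates but pinning down the exact Boolean condition defining $E(H)$ in terms of $(c, T^{(1)}, T^{(2)})$ so that \emph{both} the deterministic $F^{(k)}$-freeness and the asymptotic density $2^{2-k}$ hold simultaneously — this is precisely the delicate point in Leader–Tan and is where the hypothesis $4\mid k$ is genuinely needed. I would spend most of the effort isolating the colour-pattern invariant of a $k$-set (presumably the parity of the number of vertices of a fixed colour, modulo something) that both (i) on any $F^{(k)}$-configuration forces two of the three edges into one tournament regime, and (ii) is ``balanced'' enough that each regime carries density $2^{1-k}$ and the regimes are disjoint, so the total is $2^{2-k}$. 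Everything else — the homology argument, the martingale bounds, the union bound — is then routine adaptation of Fact~\ref{fact:Fk-free} and Lemma~\ref{lem:Hdense}, and I would only sketch it, leaving the full details to the reader as the paper does for the ordered version.
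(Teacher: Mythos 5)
Your construction is genuinely different from the paper's, and as described it does not work; the central gap is the $F^{(k)}$-freeness argument. You apply pigeonhole to conclude that two of the three edges of a putative $F^{(k)}$ fall into the same tournament regime, and then claim that two such edges ``cannot simultaneously be edges of $H(T^{(i)})$'' by Fact~\ref{fact:Fk-free}. But that fact (and its homological proof) only forbids all \emph{three} edges from lying in the same tournament hypergraph: two $k$-sets sharing $k-1$ vertices can perfectly well both be edges of $H(T^{(i)})$ (already for $k=3$, four vertices of a tournament can span two cyclic triangles; it is the third that is impossible). Concretely, for $k=4$ with $c$ constant on $w\cup\{a,b\}$ but $c(c)$ different, your hypergraph can contain $w\cup\{a,b\}\in H(T^{(1)})$ together with $w\cup\{a,c\},\,w\cup\{b,c\}\in H(T^{(2)})$, since $T^{(1)}$ and $T^{(2)}$ are independent — so the construction itself, not just the proof, fails. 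The density claim has a matching internal inconsistency that you yourself flag mid-argument: if the colour pattern of a $k$-set determines a \emph{unique} active regime (which is what any freeness argument of this shape would need), then each $k$-set has edge probability $2^{1-k}$, not $2^{2-k}$; the entire content of the proposition is to find a single $F^{(k)}$-free structure whose edge probability is doubled, and your sketch defers exactly that point.

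For comparison, the paper does not mix two independent tournaments. It takes one random $(k-2)$-uniform tournament $T_n^{(k-2)}$ and builds from it a \emph{deterministic derived} $(k-1)$-uniform tournament $DT_n^{(k-1)}$ (each $(k-1)$-set gets the orientation for which an even number of its facets have the induced orientation; well-defined because $k-1$ is odd). Then $H\bigl(DT_n^{(k-1)}\bigr)$ is $F^{(k)}$-free automatically by Fact~\ref{fact:Fk-free}, because it is literally a tournament hypergraph. The doubling of the density to $2^{2-k}$ comes from the fact that the facet orientations of a $k$-set in $DT_n^{(k-1)}$ are correlated: a parity count over an auxiliary bipartite graph with $\binom{k}{2}$ edges shows that each of the two orientations of a $k$-set is coherent with probability $2^{1-k}$ instead of $2^{-k}$, and this is precisely where $4\mid k$ (making $\binom{k}{2}$ even) enters. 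The concentration and union bound then run as in Lemma~\ref{lem:Hdense}, with the underlying martingale indexed by the $\binom{n}{k-2}$ orientations of $T_n^{(k-2)}$. If you want to salvage your approach you would need an edge-defining rule under which every $F^{(k)}$ configuration forces all three edges into one regime, which your colour-splitting does not achieve.
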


\begin{proof}
Consider a $(k-2)$-uniform tournament $T_n^{(k-2)}$ with vertex set $[n]$. 
We define the $(k-1)$-uniform tournament $DT_n^{(k-1)}$ with each $x\in [n]^{(k-1)}$
receiving that orientation $\sigma$ which has the property that the number of
elements $i\in x$ for which $T_n^{(k-2)}$ assigns the orientation $\sigma_i$ to $x\setminus\{i\}$ is even.
Notice that this conditions determines uniquely which of the two possible orientations
$DT_n^{(k-1)}$ assigns to $x$ because $k-1$ is odd.

By Fact~\ref{fact:Fk-free} the hypergraph $H\bigl(DT_n^{(k-1)}\bigr)$ is always $F^{(k)}$-free,
so it suffices to prove that if $T_n^{(k-2)}$ gets chosen uniformly at random, then for any fixed 
$\eta>0$ the probability that~$H\bigl(DT_n^{(k-1)}\bigr)$ is $(2^{2-k}, \eta, k-3)$-dense 
approaches $1$ as $n$ tends to infinity. This can be shown by the same strategy as 
Lemma~\ref{lem:Hdense} provided that one knows that any fixed  $e\in [n]^{(k)}$ has a probability
of $2^{2-k}$ to be an edge of $H\bigl(DT_n^{(k-1)}\bigr)$. 

By symmetry we only need to prove this for $e=[k]$ and $n=k$. 
Let $\sigma$ be the orientation $+(1, 2, \ldots, k)$ of $[k]$ and denote the event that
$DT_n^{(k-1)}$ assigns for every $i\in [k]$ the orientation $\sigma_i$ to $[k]\setminus\{i\}$ by $\ccE$.
As proved below, we have
\begin{equation}\label{eq:prob-E}
	\PP(\ccE)=2^{1-k}\,.
\end{equation}
By symmetry the corresponding statement about $-\sigma$ holds as well and taken together 
these two equations show that $[k]$ has indeed a probability of $2\cdot 2^{1-k}$ of being 
an edge of $H\bigl(DT_k^{(k-1)}\bigr)$. Thus the proof of~\eqref{eq:prob-E}
concludes at the same time the proof of Proposition~\ref{prop:4k}.

Before we proceed to the proof of~\eqref{eq:prob-E} we associate a bipartite graph $GT_k^{(k-2)}$
with any $(k-2)$-uniform tournament $T_k^{(k-2)}$. Its two vertex classes are the set 
$AT_k^{(k-2)}$ of orientations which $T_k^{(k-2)}$ associates to the members of $[k]^{(k-2)}$
and the set $B=\{\sigma_i \colon i\in [k]\}$. An edge between $a\in AT_k^{(k-2)}$ and $\sigma_i\in B$
signifies that $a$ is an orientation of a subset of $[k]\setminus\{i\}$ that is induced by $\sigma_i$.
Whenever $1\le i<j\le k$ the orientations $\sigma_i$ and $\sigma_j$ induce different orientations
on the $(k-2)$-set $[k]-\{i, j\}$ and consequently every $a\in AT_k^{(k-2)}$ has degree $1$ 
in~$GT_k^{(k-2)}$. The total number of edges of $GT_k^{(k-2)}$ is therefore $\binom{k}{2}$ and, 
as $k$ is a multiple of~$4$, it follows that 
\begin{equation}\label{eq:E-even}
	GT_k^{(k-2)} \text{ has an even number of edges}\,.
\end{equation}
Due to the definition of $DT_k^{(k-1)}$, the event $\ccE$ happens if and only if 
every vertex $\sigma_i\in B$ has even degree, which by~\eqref{eq:E-even} is equivalent 
to the vertices from $B-\{\sigma_k\}$ having even degrees.

Now let $\alpha$ be any assignment of orientations to the members of 
\[
	\bigl\{x\in [k]^{(k-2)}\colon k\in x\bigr\}\,.
\]
In order to prove~\eqref{eq:prob-E} it suffices to show that the conditional probability 
of $\ccE$ given that~$T_k^{(k)}$ extends $\alpha$ is $2^{1-k}$. Given $\alpha$ the only 
information about $T_k^{(k-1)}$ we still need for figuring out whether $\ccE$ holds are the
orientations of the sets $[k-1]\setminus\{i\}$ with $i\in [k-1]$. Moreover for each $i\in [k-1]$
there is a unique way of orienting $[k-1]\setminus\{i\}$ in such a way that $\sigma_i$ receives
an even degree in $GT_k^{(k-1)}$ and the probability that $T_k^{(k-1)}$ orients $[k-1]\setminus\{i\}$
in this manner is $\tfrac12$. Hence given $\alpha$ the probability that $\ccE$ holds, 
i.e., that all vertices from $B-\{\sigma_k\}$ have even degrees, is indeed $2^{1-k}$.
\end{proof}  

\subsection{More edges} 
One of the perhaps most important conjectures about generalised Tur\'an densities of 
$3$-uniform hypergraphs states that $\pivvv\bl K^{(3)}_4\br=\tfrac 12$. The lower 
bound follows from a construction presented by R\"odl in~\cite{Ro86} and the most 
recent contribution in favour of this conjecture seems to be the formula
$\piev\bl K^{(3)}_4\br=\tfrac 12$ obtained in~\cite{RRS-b}.

R\"odl's construction and the random tournament hypergraph admit a common generalisation.
Depending on any colouring
\[
	\gamma\colon [n]^{(k-1)} \longrightarrow\{\text{red}, \text{green}\}
\]
and any integer $r\in [3, k+1]$ we define a $k$-uniform hypergraph $H^{(k)}_r(\gamma)$
with vertex set~$[n]$ in the following way: if $\{x_1, \ldots, x_k\}$ lists the elements of 
some $x\in [n]^{(k)}$ in increasing order, then $x$ is declared to be an edge of 
$H^{(k)}_r(\gamma)$ if and only if 
\[
	\gamma(x\setminus\{x_1\})\ne \gamma(x\setminus\{x_2\})\ne \ldots \ne \gamma(x\setminus\{x_{k+3-r}\})\,.
\]
These hypergraphs can be used to obtain the following lower bound:

\begin{fact}\label{fact:Fkr}
If $k\ge 2$ and $3\le r\le k+1$, then the $k$-uniform hypergraph $F^{(k)}_r$ with $(k+1)$
vertices and $r$ edges satisfies $\pi_{k-2}\bl F^{(k)}_r\br\ge 2^{r-k-2}$.
\end{fact}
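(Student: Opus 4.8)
The plan is to establish the lower bound with the hypergraphs $H^{(k)}_r(\gamma)$ defined above, taking $\gamma\colon[n]^{(k-1)}\to\{\mathrm{red},\mathrm{green}\}$ uniformly at random and following the template of Lemma~\ref{lem:Hdense}. Two things must be verified: that $H^{(k)}_r(\gamma)$ is $F^{(k)}_r$-free for \emph{every} $\gamma$, and that for each fixed $\eta>0$ it is $(2^{r-k-2},\eta,k-2)$-dense with probability tending to $1$. Granting both, $\pi_{k-2}(F^{(k)}_r)\ge 2^{r-k-2}$ follows exactly as \eqref{eq:low} was deduced.

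First I would record that every fixed $e=\{x_1<\dots<x_k\}\in[n]^{(k)}$ is an edge of $H^{(k)}_r(\gamma)$ with probability exactly $2^{r-k-2}$: the defining condition involves only the $k+3-r$ pairwise distinct $(k-1)$-sets $e\setminus\{x_1\},\dots,e\setminus\{x_{k+3-r}\}$, whose $\gamma$-values are independent and uniform, so the chance that the $k+2-r$ prescribed consecutive pairs among them all disagree is $2^{-(k+2-r)}=2^{r-k-2}$. Given this, the density assertion is proved \emph{verbatim} as in Lemma~\ref{lem:Hdense} (cf.\ also the remark following Proposition~\ref{prop:4k}): altering one value of $\gamma$ changes $|\cK_k(G^{(k-2)})\cap E|$ by at most $n$ for every $(k-2)$-uniform $G^{(k-2)}$ on $[n]$, so Azuma--Hoeffding gives a failure probability of $e^{-\Omega(n^{k-1})}$ for each fixed $G^{(k-2)}$, and a union bound over the $e^{O(n^{k-2})}$ choices of $G^{(k-2)}$ finishes it.

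The substantive step is $F^{(k)}_r$-freeness, and here is the combinatorial core I would isolate. Since $F^{(k)}_r$ is the unique $k$-uniform hypergraph on $k+1$ vertices with $r$ edges, a $(k+1)$-set $W$ of vertices induces a copy of $F^{(k)}_r$ iff it induces at least $r$ edges; so it suffices to show every $(k+1)$-set $W=\{w_0<\dots<w_k\}$ carries at most $r-1$ edges among its $k$-subsets. For $0\le a<b\le k$ put $c(a,b)=\gamma\bigl(W\setminus\{w_a,w_b\}\bigr)$, a $2$-colouring of the edges of the complete graph on $\{0,\dots,k\}$. A short check with increasing enumerations shows that, writing $s=k+2-r$, the $k$-set $W\setminus\{w_j\}$ is an edge of $H^{(k)}_r(\gamma)$ precisely when the colours $c(j,m)$, with $m$ ranging in increasing order over the $s+1$ smallest elements of $\{0,\dots,k\}\setminus\{j\}$, form an alternating sequence. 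For $j\in\{s+2,\dots,k\}$ this involves only the $s+1$ edges $\{j,0\},\dots,\{j,s\}$, and these edge sets are pairwise disjoint across such $j$, so all $k-s-1$ of these conditions may hold. For $j\in\{0,\dots,s+1\}$ the condition becomes ``the star at $j$ inside the complete graph on $\{0,\dots,s+1\}$ is alternating along the increasing enumeration of its leaves'', and the key claim is that \emph{at most two} of these $s+2$ star-conditions can hold simultaneously; this gives the bound $2+(k-s-1)=r-1$ and hence freeness.

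To prove the claim I would argue by parity: if the star-condition at $j\in\{0,\dots,s+1\}$ holds then $c(j,m)\equiv\alpha_j+\rho_j(m)\pmod2$ for all its leaves $m$, where $\rho_j(m)$ is the rank of $m$ among those leaves and $\alpha_j\in\{0,1\}$; reading the colour of an edge $\{j,j'\}$ from both endpoints then forces $\alpha_j+\alpha_{j'}\equiv j+j'+1\pmod2$ whenever both star-conditions hold, and summing this relation over the three pairs coming from three hypothetical conditions at $j_1<j_2<j_3$ makes the $\alpha$'s cancel and leaves $0\equiv1\pmod2$. I expect this bookkeeping-plus-parity step to be the main obstacle --- in particular getting the increasing-enumeration reindexing of $W\setminus\{w_j\}$ straight and checking that the conditions with $j\le s+1$ really live inside the complete graph on $\{0,\dots,s+1\}$; the rest is a routine rerun of Lemma~\ref{lem:Hdense}. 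As sanity checks, $r=3$ recovers $F^{(k)}$-freeness (the relevant complete graph is on all of $\{0,\dots,k\}$, as in Fact~\ref{fact:Fk-free}), $r=k+1$ gives $K^{(k)}_{k+1}$-freeness, and $k=2,\ r=3$ reproduces Mantel's bipartite construction.
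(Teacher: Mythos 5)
Your proposal is correct, and it splits into the same two halves as the paper's proof: the probabilistic half (each $k$-set is an edge with probability exactly $2^{r-k-2}$, then the Azuma--Hoeffding/union-bound argument of Lemma~\ref{lem:Hdense}) is identical to what the paper does, but your proof of $F^{(k)}_r$-freeness is genuinely different. The paper argues by induction on $r$: the base case $r=3$ identifies $H^{(k)}_3(\gamma)$ with a higher order tournament hypergraph $H\bigl(T^{(k-1)}_n\bigr)$ and invokes Fact~\ref{fact:Fk-free}, and the induction step passes from a hypothetical $F^{(k)}_{r+1}$ on $v_1<\dots<v_{k+1}$ to an $F^{(k-1)}_{r}$ in $H^{(k-1)}_{r}(\overline{\gamma})$ for the link colouring $\overline{\gamma}(z)=\gamma(z\cup\{v_{k+1}\})$, using that at least $r$ of the $r+1$ edges contain $v_{k+1}$ and that for those edges the defining condition only queries $(k-1)$-sets containing $v_{k+1}$. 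You instead prove directly that every $(k+1)$-set spans at most $r-1$ edges, by encoding the edge conditions as alternation conditions on stars of an auxiliary $2$-colouring of $K_{k+1}$ and ruling out three simultaneous star-conditions inside $K_{s+2}$ via the parity relation $\alpha_j+\alpha_{j'}\equiv j+j'+1\pmod 2$. I checked the bookkeeping --- the ranks $\rho_j(j')=j'$ and $\rho_{j'}(j)=j+1$ for $j<j'$, the disjointness of the relevant edge sets for $j\ge s+2$, and the final count $2+(k-s-1)=r-1$ --- and it is sound, including the boundary cases $r=3$ and $r=k+1$. The paper's induction is shorter because it can lean on Fact~\ref{fact:Fk-free}; your argument is self-contained and marginally stronger in that it bounds the number of edges on every $(k+1)$-set (which, since $F^{(k)}_r$ is the unique $k$-uniform hypergraph on $k+1$ vertices with $r$ edges, is equivalent to freeness here). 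Both routes are valid.
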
 

\begin{proof}
An argument very similar to the proof of Lemma~\ref{lem:Hdense} shows that for fixed $\eta$
the probability that $H^{(k)}_r(\gamma)$ is $(2^{r-k-2}, \eta, k-2)$-dense tends to $1$ as 
$n$ tends to infinity. 

Thus it suffices to prove that for no colouring $\gamma$ the hypergraph
$H^{(k)}_r(\gamma)$ can contain an~$F^{(k)}_r$. We verify this by induction on $r$.
To deal with the base case $r=3$ it suffices in view of Fact~\ref{fact:Fk-free} to observe 
that $H^{(k)}_r(\gamma)=H\bl T_n^{(k-1)}\br$, where the higher order tournament $T_n^{(k-1)}$
is defined as follows: if $y=\{y_1, \ldots, y_{k-1}\}$ lists the elements of some 
$y\in [n]^{(k-1)}$ in increasing order, then $y$ receives the orientation 
$+(y_1, \ldots, y_{k-1})$ in $T_n^{(k-1)}$ if $\gamma(y)=\text{red}$, and otherwise it 
receives the opposite orientation.  

For the induction step from $r$ to $r+1$ we assume that for some colouring $\gamma$
of $[n]^{(k-1)}$ there would exist an $F^{(k)}_{r+1}$ in $H^{(k)}_{r+1}(\gamma)$,
say with vertices $v_1<v_2<\ldots, v_{k+1}$. Observe that $k+1\ge r+1\ge 4$ yields $k\ge 3$.
Moreover, at least $r$ edges of our $F^{(k)}_{r+1}$ must contain $v_{k+1}$.
Thus if we set $\overline{n}=v_{k+1}-1$ and let 
$\overline{\gamma}\colon [\overline{n}]^{(k-2)}\longrightarrow \{\text{red}, \text{green}\}$ 
be the colouring defined by 
$\overline{\gamma}(z)=\gamma(z\cup\{v_{k+1}\})$ for all $z\in [\overline{n}]^{(k-2)}$,
then $\{v_1, \ldots, v_{k}\}$ spans an $F^{(k-1)}_{r}$ in $H^{(k-1)}_{r}(\overline{\gamma})$,
contrary to the induction hypothesis.
\end{proof}

It would be extremely interesting if the lower bound just obtained were optimal.
Notice that this holds for $r=3$ owing to the three edge theorem while the case 
$k=3$ and $r=4$ corresponds to the problem of deciding whether 
$\pivvv\bl K^{(3)}_4\br=\tfrac 12$ holds mentioned above.

In the special case $r=k+1$ we get the lower bound $\pi_{k-2}\bl K^{(k)}_{k+1}\br\ge\tfrac 12$
addressing the generalised Tur\'{a}n density of the clique with $k+1$ vertices. 
The construction given in the proof of Fact~\ref{fact:Fkr}
showing this lower bound extends to larger cliques as follows.

\begin{fact}\label{fact:cliques}
If $t\ge k\ge 2$, then $\pi_{k-2}\bl K^{(k)}_t\br\ge \tfrac{t-k}{t-k+1}$.
\end{fact}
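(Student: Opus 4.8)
The plan is to generalise the colouring construction from the proof of Fact~\ref{fact:Fkr} by replacing the two colours with $s=t-k+1$ colours. For a colouring $\gamma\colon[n]^{(k-1)}\to[s]$ I would define a $k$-uniform hypergraph $H^{(k)}(\gamma)$ on the vertex set $[n]$ by declaring a $k$-set $x=\{x_1<\dots<x_k\}$ to be an edge if and only if $\gamma(x\setminus\{x_1\})\ne\gamma(x\setminus\{x_2\})$; this is precisely the edge rule of $H^{(k)}_{k+1}(\gamma)$ from Fact~\ref{fact:Fkr}, now evaluated with $s$ colours instead of two. For every fixed $x$ the two $(k-1)$-sets $x\setminus\{x_1\}$ and $x\setminus\{x_2\}$ are distinct, so under a uniformly random colouring $\gamma$ the $k$-set $x$ becomes an edge with probability exactly $1-\tfrac1s=\tfrac{t-k}{t-k+1}$.

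First I would check that $H^{(k)}(\gamma)$ is always $K^{(k)}_t$-free, which is the structural heart of the argument. Suppose $v_1<\dots<v_t$ spanned a copy of $K^{(k)}_t$, and put $W=\{v_{t-k+3},\dots,v_t\}$, the set of the largest $k-2$ among these vertices. For any $1\le i<j\le t-k+2$ the $k$-set $\{v_i,v_j\}\cup W$ has $v_i$ and $v_j$ as its two smallest elements (here one uses $j\le t-k+2$, so that $v_j<v_{t-k+3}=\min W$), and therefore the edge rule forces $\gamma(\{v_i\}\cup W)\ne\gamma(\{v_j\}\cup W)$. Hence the $t-k+2$ values $\gamma(\{v_1\}\cup W),\dots,\gamma(\{v_{t-k+2}\}\cup W)$ would all be pairwise distinct, which is impossible since only $s=t-k+1$ colours are available.

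It then remains to establish the density statement, which I would do by repeating the proof of Lemma~\ref{lem:Hdense} almost verbatim. The point is that a single $(k-1)$-set $y$ affects the colour-dependent membership of fewer than $n$ of the $k$-sets $x$, namely only those for which $y$ equals $x\setminus\{x_1\}$ or $x\setminus\{x_2\}$; consequently re-colouring one $(k-1)$-set changes $\bigl|\cK_k(G^{(k-2)})\cap E\bigr|$ by at most $n$, for every $(k-2)$-uniform hypergraph $G^{(k-2)}$ on $[n]$. Since by linearity of expectation the expected value of $\bigl|\cK_k(G^{(k-2)})\cap E\bigr|$ equals $\tfrac{t-k}{t-k+1}\,\bigl|\cK_k(G^{(k-2)})\bigr|$, the Azuma--Hoeffding inequality (used exactly as in Lemma~\ref{lem:Hdense}) shows that the bad event $\bigl|\cK_k(G^{(k-2)})\cap E\bigr|<\tfrac{t-k}{t-k+1}\,\bigl|\cK_k(G^{(k-2)})\bigr|-\eta n^k$ has probability at most $e^{-\Omega(n^{k-1})}$, and a union bound over the $e^{O(n^{k-2})}$ choices of $G^{(k-2)}$ shows that $H^{(k)}(\gamma)$ is $\bigl(\tfrac{t-k}{t-k+1},\eta,k-2\bigr)$-dense with probability tending to $1$ as $n\to\infty$. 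Together with the freeness this gives, via Definition~\ref{dfn:jdense}, the bound $\pi_{k-2}\bigl(K^{(k)}_t\bigr)\ge\tfrac{t-k}{t-k+1}$.

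The only point I expect to require genuine care is the freeness argument: one must choose the auxiliary $(k-2)$-set $W$ to consist of the \emph{largest} $k-2$ vertices of the putative clique, so that in every $k$-set $\{v_i,v_j\}\cup W$ with $i<j\le t-k+2$ the vertices $v_i$ and $v_j$ really occupy the first two positions of the increasing enumeration and the edge rule applies to the pair $\bigl(\gamma(\{v_i\}\cup W),\,\gamma(\{v_j\}\cup W)\bigr)$; once that is set up, the pigeonhole contradiction is immediate. Everything else is a routine repetition of the concentration estimate already carried out for Lemma~\ref{lem:Hdense} and re-used in Fact~\ref{fact:Fkr}.
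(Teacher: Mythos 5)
Your proposal is correct and follows essentially the same route as the paper: the same $(t-k+1)$-colouring of $[n]^{(k-1)}$ with the edge rule $\gamma(x\setminus\{x_1\})\ne\gamma(x\setminus\{x_2\})$, the same pigeonhole argument on the $(k-2)$-set of largest clique vertices for $K^{(k)}_t$-freeness, and the same Azuma--Hoeffding concentration argument for the density. The only difference is cosmetic: you spell out the concentration step that the paper dismisses as ``standard arguments.''
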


\begin{proof}
Depending on any colouring 
\[
	\phi\colon [n]^{(k-1)}\longrightarrow [t-k+1]
\]
we define a $k$-uniform hypergraph $R^{(k)}(\phi)$ with vertex set $[n]$ having all
those $k$-sets $\{v_1, \ldots, v_k\}$ with $v_1<\ldots< v_k$ as edges that satisfy
$\phi(\{x_2, \ldots, x_k\})\ne \phi(\{x_1, x_3,\ldots, x_k\})$.

Again standard arguments show that the probability for $R^{(k)}(\phi)$ to 
be $\bl\tfrac{t-k}{t-k+1}, \eta, k-2\br$-dense tends for fixed $\eta$ to $1$ as 
$n$ tends to infinity. 

Assume for the sake of contradiction that some $t$ vertices, say $v_1<\ldots< v_t$,
would span a clique in $R^{(k)}(\phi)$. Let $z=\{v_{t+3-k}, \ldots, v_t\}$ denote 
the set of the last $k-2$ vertices of this clique. Due to the so-called 
\emph{Schubfachprinzip} (also known as pigeonhole principle)  there 
must be two indices $1\le i<j\le t+2-k$ with $\phi(\{v_i\}\cup z)=\phi(\{v_j\}\cup z)$.
But this means that $\{v_i, v_j\}\cup z$ cannot be an edge of~$R^{(k)}(\phi)$.   
\end{proof}

It may be interesting to observe that by Tur\'an's theorem Fact~\ref{fact:cliques} 
holds with equality for $k=2$. We are not aware of any construction showing that this
cannot be true in general.

\begin{quest} 
Do we have $\pi_{k-2}\bl K^{(k)}_t\br= \tfrac{t-k}{t-k+1}$ whenever $t\ge k\ge 2$?
\end{quest} 

Notice that for $k=3$ and $t=6$ there is a construction demonstrating 
$\pivvv\bl K^{(3)}_6\br \ge \tfrac34$ different from the above one described
in~\cite{RRS-a}*{Subsection~5.1}.
 
\begin{bibdiv}
\begin{biblist}

\bib{BaTa11}{article}{
   author={Baber, Rahil},
   author={Talbot, John},
   title={Hypergraphs do jump},
   journal={Combin. Probab. Comput.},
   volume={20},
   date={2011},
   number={2},
   pages={161--171},
   issn={0963-5483},
   review={\MR{2769186 (2012g:05166)}},
   doi={10.1017/S0963548310000222},
}

\bib{Er64}{article}{
   author={Erd{\H{o}}s, P.},
   title={On extremal problems of graphs and generalized graphs},
   journal={Israel J. Math.},
   volume={2},
   date={1964},
   pages={183--190},
   issn={0021-2172},
   review={\MR{0183654 (32 \#1134)}},
}

\bib{Er77}{article}{
   author={Erd{\H{o}}s, Paul},
   title={Paul Tur\'an, 1910--1976: his work in graph theory},
   journal={J. Graph Theory},
   volume={1},
   date={1977},
   number={2},
   pages={97--101},
   issn={0364-9024},
   review={\MR{0441657 (56 \#61)}},
}

\bib{Er90}{article}{
   author={Erd{\H{o}}s, Paul},
   title={Problems and results on graphs and hypergraphs: similarities and
   differences},
   conference={
      title={Mathematics of Ramsey theory},
   },
   book={
      series={Algorithms Combin.},
      volume={5},
      publisher={Springer, Berlin},
   },
   date={1990},
   pages={12--28},
   review={\MR{1083590}},
}

\bib{ErHa72}{article}{
   author={Erd{\H{o}}s, P.},
   author={Hajnal, A.},
   title={On Ramsey like theorems. Problems and results},
   conference={
      title={Combinatorics (Proc. Conf. Combinatorial Math., Math. Inst.,
      Oxford, 1972)},
   },
   book={
      publisher={Inst. Math. Appl., Southend-on-Sea},
   },
   date={1972},
   pages={123--140},
   review={\MR{0337636 (49 \#2405)}},
}

\bib{ErSi66}{article}{
   author={Erd{\H{o}}s, P.},
   author={Simonovits, M.},
   title={A limit theorem in graph theory},
   journal={Studia Sci. Math. Hungar},
   volume={1},
   date={1966},
   pages={51--57},
   issn={0081-6906},
   review={\MR{0205876 (34 \#5702)}},
}

\bib{ErSo82}{article}{
   author={Erd{\H{o}}s, P.},
   author={S{\'o}s, Vera T.},
   title={On Ramsey-Tur\'an type theorems for hypergraphs},
   journal={Combinatorica},
   volume={2},
   date={1982},
   number={3},
   pages={289--295},
   issn={0209-9683},
   review={\MR{698654 (85d:05185)}},
   doi={10.1007/BF02579235},
}

\bib{ErSt46}{article}{
   author={Erd{\H{o}}s, P.},
   author={Stone, A. H.},
   title={On the structure of linear graphs},
   journal={Bull. Amer. Math. Soc.},
   volume={52},
   date={1946},
   pages={1087--1091},
   issn={0002-9904},
   review={\MR{0018807 (8,333b)}},
}

\bib{FrFu84}{article}{
   author={Frankl, P.},
   author={F{\"u}redi, Z.},
   title={An exact result for $3$-graphs},
   journal={Discrete Math.},
   volume={50},
   date={1984},
   number={2-3},
   pages={323--328},
   issn={0012-365X},
   review={\MR{753720 (85k:05063)}},
   doi={10.1016/0012-365X(84)90058-X},
}

\bib{FrRo88}{article}{
   author={Frankl, P.},
   author={R{\"o}dl, V.},
   title={Some Ramsey-Tur\'an type results for hypergraphs},
   journal={Combinatorica},
   volume={8},
   date={1988},
   number={4},
   pages={323--332},
   issn={0209-9683},
   review={\MR{981890}},
   doi={10.1007/BF02189089},
}

\bib{GKV}{article}{
   author={Glebov, Roman},
   author={Kr{\'a}{\soft{l}}, Daniel},
   author={Volec, Jan},
   title={A problem of Erd\H os and S\'os on 3-graphs},
   journal={Israel J. Math.},
   volume={211},
   date={2016},
   number={1},
   pages={349--366},
   issn={0021-2172},
   review={\MR{3474967}},
   doi={10.1007/s11856-015-1267-4},
}

\bib{Go07}{article}{
   author={Gowers, W. T.},
   title={Hypergraph regularity and the multidimensional Szemer\'edi
   theorem},
   journal={Ann. of Math. (2)},
   volume={166},
   date={2007},
   number={3},
   pages={897--946},
   issn={0003-486X},
   review={\MR{2373376}},
   doi={10.4007/annals.2007.166.897},
}

\bib{JLR00}{book}{
   author={Janson, Svante},
   author={{\L}uczak, Tomasz},
   author={Ruci{\'n}ski, Andrzej},
   title={Random graphs},
   series={Wiley-Interscience Series in Discrete Mathematics and
   Optimization},
   publisher={Wiley-Interscience, New York},
   date={2000},
   pages={xii+333},
   isbn={0-471-17541-2},
   review={\MR{1782847}},
   doi={10.1002/9781118032718},
}

\bib{Ke11}{article}{
   author={Keevash, Peter},
   title={Hypergraph Tur\'an problems},
   conference={
      title={Surveys in combinatorics 2011},
   },
   book={
      series={London Math. Soc. Lecture Note Ser.},
      volume={392},
      publisher={Cambridge Univ. Press, Cambridge},
   },
   date={2011},
   pages={83--139},
   review={\MR{2866732}},
}

\bib{KRS02}{article}{
   author={Kohayakawa, Yoshiharu},
   author={R{\"o}dl, Vojt{\v{e}}ch},
   author={Skokan, Jozef},
   title={Hypergraphs, quasi-randomness, and conditions for regularity},
   journal={J. Combin. Theory Ser. A},
   volume={97},
   date={2002},
   number={2},
   pages={307--352},
   issn={0097-3165},
   review={\MR{1883869 (2003b:05112)}},
   doi={10.1006/jcta.2001.3217},
}

\bib{LeTa10}{article}{
   author={Leader, Imre},
   author={Tan, Ta Sheng},
   title={Directed simplices in higher order tournaments},
   journal={Mathematika},
   volume={56},
   date={2010},
   number={1},
   pages={173--181},
   issn={0025-5793},
   review={\MR{2604992 (2011e:05101)}},
   doi={10.1112/S0025579309000539},
}

\bib{Ma07}{article}{
   author={Mantel, W.},
   title={Vraagstuk {\rm XXVIII}},
   journal={Wiskundige Opgaven},
   volume={10},
   date={1907},
   pages={60--61},
}

\bib{NPRS09}{article}{
   author={Nagle, Brendan},
   author={Poerschke, Annika},
   author={R{\"o}dl, Vojt{\v{e}}ch},
   author={Schacht, Mathias},
   title={Hypergraph regularity and quasi-randomness},
   conference={
      title={Proceedings of the Twentieth Annual ACM-SIAM Symposium on
      Discrete Algorithms},
   },
   book={
      publisher={SIAM, Philadelphia, PA},
   },
   date={2009},
   pages={227--235},
   review={\MR{2809322}},
}

\bib{NRS06}{article}{
   author={Nagle, Brendan},
   author={R{\"o}dl, Vojt{\v{e}}ch},
   author={Schacht, Mathias},
   title={The counting lemma for regular $k$-uniform hypergraphs},
   journal={Random Structures Algorithms},
   volume={28},
   date={2006},
   number={2},
   pages={113--179},
   issn={1042-9832},
   review={\MR{2198495}},
   doi={10.1002/rsa.20117},
}

\bib{NRS}{unpublished}{
   author={Nagle, Brendan},
   author={R{\"o}dl, Vojt{\v{e}}ch},
   author={Schacht, Mathias},
   note={Personal communication},
}

\bib{Po}{book}{
   author={Pontryagin, L. S.},
   title={Foundations of combinatorial topology},
   publisher={Graylock Press, Rochester, N. Y.},
   date={1952},
   pages={xii+99},
   review={\MR{0049559}},
}

\bib{Ra07}{article}{
   author={Razborov, Alexander A.},
   title={Flag algebras},
   journal={J. Symbolic Logic},
   volume={72},
   date={2007},
   number={4},
   pages={1239--1282},
   issn={0022-4812},
   review={\MR{2371204 (2008j:03040)}},
   doi={10.2178/jsl/1203350785},
}

\bib{Ra10}{article}{
   author={Razborov, Alexander A.},
   title={On 3-hypergraphs with forbidden 4-vertex configurations},
   journal={SIAM J. Discrete Math.},
   volume={24},
   date={2010},
   number={3},
   pages={946--963},
   issn={0895-4801},
   review={\MR{2680226 (2011k:05171)}},
   doi={10.1137/090747476},
}

\bib{RRS-a}{article}{
	author={Reiher, Chr.}, 
	author={R{\"o}dl, Vojt\v{e}ch},
	author={Schacht, Mathias},
	title={On a Tur\'an problem in weakly quasirandom $3$-uniform hypergraphs}, 
	eprint={1602.02290},
	note={Submitted},
}

\bib{RRS-b}{article}{
   author={Reiher, Chr.},
   author={R{\"o}dl, Vojt\v{e}ch},
   author={Schacht, Mathias},
   title={Embedding tetrahedra into quasirandom hypergraphs},
   journal={J. Combin. Theory Ser. B},
   volume={121},
   date={2016},
   pages={229--247},
   issn={0095-8956},
   review={\MR{3548293}},
   doi={10.1016/j.jctb.2016.06.008},
}

\bib{RRS-c}{article}{
	author={Reiher, Chr.}, 
	author={R{\"o}dl, Vojt\v{e}ch},
	author={Schacht, Mathias},
	title={Some remarks on $\pill$},
	eprint={1602.02299},
	note={Submitted},
}

\bib{RRS-d}{unpublished}{
	author={Reiher, Chr.}, 
	author={R{\"o}dl, Vojt\v{e}ch},
	author={Schacht, Mathias},
	title={Hypergraphs with vanishing Tur\'an density in uniformly dense hypergraphs}, 
	note={Preprint},
}

\bib{Ro86}{article}{
   author={R{\"o}dl, Vojt{\v{e}}ch},
   title={On universality of graphs with uniformly distributed edges},
   journal={Discrete Math.},
   volume={59},
   date={1986},
   number={1-2},
   pages={125--134},
   issn={0012-365X},
   review={\MR{837962 (88b:05098)}},
   doi={10.1016/0012-365X(86)90076-2},
}

\bib{RSch07RL}{article}{
   author={R{\"o}dl, Vojt{\v{e}}ch},
   author={Schacht, Mathias},
   title={Regular partitions of hypergraphs: regularity lemmas},
   journal={Combin. Probab. Comput.},
   volume={16},
   date={2007},
   number={6},
   pages={833--885},
   issn={0963-5483},
   review={\MR{2351688}},
}

\bib{RSch07CL}{article}{
   author={R{\"o}dl, Vojt{\v{e}}ch},
   author={Schacht, Mathias},
   title={Regular partitions of hypergraphs: counting lemmas},
   journal={Combin. Probab. Comput.},
   volume={16},
   date={2007},
   number={6},
   pages={887--901},
   issn={0963-5483},
   review={\MR{2351689}},
}

\bib{RS04}{article}{
   author={R{\"o}dl, Vojt{\v{e}}ch},
   author={Skokan, Jozef},
   title={Regularity lemma for $k$-uniform hypergraphs},
   journal={Random Structures Algorithms},
   volume={25},
   date={2004},
   number={1},
   pages={1--42},
   issn={1042-9832},
   review={\MR{2069663}},
   doi={10.1002/rsa.20017},
}

\bib{Sz}{article}{
   author={Szemer{\'e}di, Endre},
   title={Regular partitions of graphs},
   language={English, with French summary},
   conference={
      title={Probl\`emes combinatoires et th\'eorie des graphes},
      address={Colloq. Internat. CNRS, Univ. Orsay, Orsay},
      date={1976},
   },
   book={
      series={Colloq. Internat. CNRS},
      volume={260},
      publisher={CNRS, Paris},
   },
   date={1978},
   pages={399--401},
   review={\MR{540024 (81i:05095)}},
}

\bib{Tu41}{article}{
   author={Tur{\'a}n, Paul},
   title={Eine Extremalaufgabe aus der Graphentheorie},
   language={Hungarian, with German summary},
   journal={Mat. Fiz. Lapok},
   volume={48},
   date={1941},
   pages={436--452},
   review={\MR{0018405 (8,284j)}},
}

\end{biblist}
\end{bibdiv}
 
\end{document}